\documentclass[reqno]{amsart}
\usepackage{lmodern}
\usepackage[T5]{fontenc}
\usepackage{amssymb}
\usepackage{amsmath}
\usepackage{amsmath}
\usepackage{bbm}
\usepackage[v2,cmtip]{xy}
\usepackage{mathrsfs}
\usepackage{rotating}
\usepackage{hyperref}
\usepackage{todonotes}
\input diagxy

\newcommand{\colim}{\displaystyle{\lim_{\longrightarrow}}}

\def\f2{{\mathbb F}_{2}}
\def\z2{{\mathbb Z}/2}

\def\fp{{\mathbb F}_{p}}

\DeclareMathOperator{\p}{\mathcal{P}}

\DeclareMathOperator{\im}{im}
\DeclareMathOperator{\Hom}{Hom}
\DeclareMathOperator{\Extgroup}{Ext}
\DeclareMathOperator{\Torgroup}{Tor}
\newcommand{\Ext}[4]{\Extgroup_{{\scriptscriptstyle{#1}} } ^{#2}({#3}, {#4}) }
\newcommand{\ext}[2]{\Extgroup_{{\scriptscriptstyle{#1}} } ^{#2} }
\newcommand{\Tor}[4]{\Torgroup^{{\scriptscriptstyle{#1}} } _{#2}({#3},
  {#4}) }

\newcommand{\tsor}[1]{\otimes_{\scriptscriptstyle{#1}}}

\newtheorem{theorem}{Theorem}[section]
\newtheorem{lemma}[theorem]{Lemma}
\newtheorem{proposition}[theorem]{Proposition}
\newtheorem{corollary}[theorem]{Corollary}
\newtheorem{conjecture}[theorem]{Conjecture}

\newtheorem{remark}[theorem]{Remark}

\numberwithin{equation}{section}

\begin{document}
\title[The Ext group $\Ext{A}{s}{\widetilde{H}^*(B\mathbb{Z}/p)}{\fp}$ and the Lannes-Zarati homomorphism]{The cohomology of the Steenrod algebra and the mod $p$ Lannes-Zarati homomorphism}
\thanks{This research is funded by the National Foundation for Science and Technology Development
(NAFOSTED) of Vietnam.}
\author[P. H. Ch\horn{o}n and P. B. Nh\horn{u}]{Phan Ho{\`a}ng Ch\horn{o}n and Ph\d am B\'ich Nh\horn{u}}
\address{Department of Mathematics and Application, Saigon University, 273 An Duong Vuong, District 5, Ho Chi Minh city, Vietnam.}
\email{phchon@sgu.edu.vn}
\address{Department of Mathematics, College of Science, Cantho University, 3/2 Street, Cantho city, Vietnam.}
\email{pbnhu@ctu.edu.vn}
\subjclass[2010]{Primary 55P47, 55Q45; Secondary 55S10, 55T15}
\keywords{Spherical classes, Hurewizc map, Lannes-Zarati homomorphism, Adams spectral sequence, Cohomology of the Steenrod algebra}

\date{\today}
\maketitle
\begin{abstract}
In this paper, we compute $\Ext{A}{s}{\widetilde{H}^*(B\mathbb{Z}/p)}{\fp}$ for $s\leq 1$. Using this result, we investigate the behavior of $\varphi_3^{\fp}$ and $\varphi_s^{\widetilde{H}^*(B\mathbb{Z}/p)}\ (s\leq1)$ for an odd prime $p$.
\end{abstract}

\section{Introduction}
The paper is a continuation of our previous one \cite{Chon_Nhu2019}, which will refer to as Part I. In Part I, we construct a chain-level representation of the dual of the mod $p$ Lannes-Zarati homomorphism $(\varphi_s^M)^\#$ in the Singer-Hưng-Sum chain complex \cite{Hung.Sum1995} as well as a chain-level representation of $\varphi_s^{\fp}$ in the lambda algebra \cite{Bousfield.etal1966}. Using the latter to investigate the mod $p$ Lannes-Zarati homomorphism $\varphi_s^M$, we obtain new results for the kernel and the image of $\varphi_s^{\fp}, s\leq 2,$ for $p$ odd. For any unstable $A$-module, let us recall that the mod $p$ Lannes-Zarati homomorphism, which is first intruduced by Lannes and Zarati in \cite{Lan-Zar87}, for each $s\geq0$, is defined as follows
\begin{equation}\label{eq:intro_LZ}
\varphi_s^M:\Ext{A}{s,s+t}{M}{\fp}\to (\fp\tsor{A}\mathscr{R}_sM)^\#_t.
\end{equation}
Where $A$ denote the mod $p$ Steenrod algebra and $\mathscr{R}_sM$ denote the Singer construction (see Singer \cite{Singer78}, \cite{Singer1983}, Lannes-Zarati \cite{Lan-Zar87}, Zarati \cite{Zarati.thesis}, see also H\h{a}i \cite{Hai.thesis}, Powell \cite{Powell2014} and citations therein for detail description).

Our interests in the map \eqref{eq:intro_LZ} (or its dual) lies in the fact that it  is closely related to the mod $p$ Hurewicz map. Indeed, if $M$ is the reduced mod $p$ (singular) cohomology of a pointed space $X$, then $\varphi_s^M$ is considered as a graded associated version of the mod $p$ Hurewicz map
\[
h_*:\pi_*^SX=\pi_*QX\to H_*QX
\]
of the infinite loop space $QX:=\colim\Omega^n\Sigma^n X$ in the $E_2$-term of Adams spectral sequence (see Lannes and Zarati \cite{Lannes1988}, \cite{Lan-Zar1983} for $p=2$ and Kuhn \cite{Kuhn2018} for an odd prime $p$). Hence, the study of the behavior of the mod $p$ Lannes-Zarati homomorphism actually corresponds to the description of the image of the mod $p$ Hurewicz map, and therefore, it also closely corresponds to the famous conjectures on spherical classes of Curtis and Wellington for $S^0$ (see Curtis \cite{Cur75}, Wellington \cite{Well82}) and of Eccles for any pointed CW-complex $X$ (which is menioned in \cite{Zare2009}).

We refer to the introduction of Part I for a detail survey of known facts about the mod $p$ Lannes-Zarati homomorphism.

In Part I, we initiated use the lambda algebra in study of the image and the kernel of the map \eqref{eq:intro_LZ} for $M=\fp$. The advantages of our method is that we can avoid using the knowledge of the so-called ``hit problem'' for $\mathscr{R}_s\fp$ as in \cite{Hung.Peterson1995}, \cite{Hung97}, \cite{Hung2003}, \cite{Hung.et.al2014}. Therefore, it can help us to not only recover previous known results with little computation involved (however, we have not pointed out this fact in Part I), but also obtain new results about the behavior of $\varphi_s^{\fp}$ for $s\leq 2$ with $p$ odd. However, for $s$ higher, the computation remains difficult because the adem relations of the mod $p$ Dyer-Lashof algebra, considered as the dual of $\mathscr{R}_s\fp$, in general, is hard to exploit (see the proof of Theorem 4.2 in \cite{Chon_Nhu2019}).

To overcome this difficulty, in this paper, we develop the power operation $\p^0$ acting on $\Ext{A}{s}{\fp}{\fp}$ (see Liulevicius \cite{Liulevicius1960} or May \cite{Coh-Lad-May76}). For $M=\fp$ and $M=\widetilde{H}^*(B\mathbb{Z}/p)$, we show that there exist the power operations $\p^0$s acting on $\Ext{A}{s}{M}{\fp}$ and on $(\fp\tsor{A}\mathscr{R}_sM)^\#$. Moreover, these actions are compatible with each other through the mod $p$ Lannes-Zarati homomorphism $\varphi_s^M$ (see Proposition \ref{pro:power operations}).

Using the construction above, we have the following, which is the first main our results.

\vspace{.2cm}

\noindent\textbf{Theorem \ref{thm:rank3}.}
\textit{The third Lannes-Zarati homomorphism
\[
\varphi_3^{\fp}:\Ext{A}{3,3+t}{\fp}{\fp}\to (\fp\tsor{A}\mathscr{R}_3\fp)^\#_t
\]
is a monomorphism for $t=0$ and vanishing at all positive stems $t$.}
\vspace{.2cm}

In order to investigate the behavior of $\varphi_s^P$ for $P:=\widetilde{H}^*(B\mathbb{Z}/p)$, we, on one hand, construct a spectral sequence, which is a generalized version of one used in Cohen-Lin-Mahowld \cite{Cohen_Lin_Mahowald1988}, Lin \cite{Lin08} and Chen \cite{Chen2011}. Using this spectral sequence to compute $\Ext{A}{s}{P}{\fp}$, we obtain the following theorems, which are the second main our results.

\vspace{.2cm}

\noindent\textbf{Theorem \ref{thm:Ext(P)^0}} (Cf. \cite[Theorem 1.1]{Crossley1996})\textbf{.}
\textit{The Ext group $\Ext{A}{0,t}{P}{\fp}$ has an $\fp$-basis consisting of all elements
\begin{enumerate}
\item $\widehat{h}_i\in \Ext{A}{0,2(p-1)p^i-1}{P}{\fp}, i\geq0$;
\item $\widehat{h}_i(k)\in \Ext{A}{0,2kp^i-1}{P}{\fp}, i\geq0,1\leq k<p-1$.
\end{enumerate}
}
\vspace{.2cm}

The Ext group $\Ext{A}{1,1+t}{P}{\fp}$ is given by the following theorem.

\vspace{.2cm}

\noindent\textbf{Theorem \ref{thm: Ext(P)^1}.}
\textit{The Ext group $\Ext{A}{1,1+t}{P}{\fp}$ has an $\fp$-basis consisting of all elements given by the following list
\begin{enumerate}
\item $\alpha_0\widehat{h}_i\in \Ext{A}{1,2(p-1)p^i}{P}{\fp},i\geq1$;
\item $\alpha_0\widehat{h}_{i}(k)\in \Ext{A}{1,2kp^i}{P}{\fp},i\geq1,1\leq k<p-1$;
\item $\widehat{\alpha}(\ell)\in \Ext{A}{1,2(p+\ell)+2}{P}{\fp},0\leq\ell<p-2$;
\item $h_i\widehat{h}_i(1)\in \Ext{A}{1,2(p-1)p^i+2p^i-1}{P}{\fp}, i\geq0$;
\item $h_i\widehat{h}_j\in \Ext{A}{1,2(p-1)(p^i+p^j)-1}{P}{\fp},i,j\geq0, j\neq i, i+1$;
\item $h_i\widehat{h}_{j}(k)\in \Ext{A}{1,2(p-1)p^i+2kp^i-1}{P}{\fp}, i,j\geq0, j\neq i, i+1,1\leq r<p-1$;
\item $\widehat{d}_i(k)\in \Ext{A}{1,2(p-1)(p^i+p^{i-1})+2kp^i-1}{P}{\fp},i\geq1,1\leq k\leq p-1$;
\item $\widehat{k}_i(k)\in \Ext{A}{1,2(k+1)p^{i+1}-1}{P}{\fp},i\geq0,1\leq k<p-1$;
\item $\widehat{p}_i(r)\in \Ext{A}{1,2(p-1)(p^i+p^{i+1})+2(r+1)p^i-1}{P}{\fp},i\geq0$, $1\leq r<p-1$.
\end{enumerate}}

\textit{The decomposable elements in $\Ext{A}{s,s+t}{P}{\fp}$ for $s\leq 1$ satisfy only the following relations:
\begin{itemize}
\item $h_i\widehat{h}_{i+1}=0, i\geq0$;
\item $h_i\widehat{h}_{i+1}(k)=0, i\geq0, 1\leq k<p-1$;
\item $h_i\widehat{h}_i=0$, $i\geq0$;
\item $h_i\widehat{h}_i(k)=0$, $i\geq0, 2\leq k<p-1$;
\item $\alpha_0\widehat{h}_0=0$; and
\item $\alpha_0\widehat{h}_0(k)=0,1\leq k<p-1$.
\end{itemize}
}
\vspace{.2cm}

On the other hand, we describe the dual of $\mathscr{R}_sM$ in term of the mod $p$ Dyer-Lashof algebra $R$, which is a quotient algebra of the lambda algebra. We show that $(\mathscr{R}_sM)^\#$ is considered as a quotient right $A$-module of $R_s\otimes M^\#$. Here $R_s$ denote the subspace of $R$ spanned by all monomials of length $s$, and the right $A$-action on $R$ is given via the Nishida relations. Basing on the description, we obtain a chain-level representation of $\varphi_s^M$ for any unstable $A$-module $M$ in term of the lambda algebra.
Using the knowledge above to determine the image and the kernel of $\varphi_s^P$, we also obtain the following results, which are the third main our results.

\vspace{.2cm}
\noindent\textbf{Theorem \ref{thm:varphi_0^P}.}
\textit{The Lannes-Zarati homomorphism $\varphi_0^{P}:\Ext{A}{0,t}{P}{\fp}\to (\fp\tsor{A}\mathscr{R}_0P)^\#_t$ is an isomorphism. 
}
\vspace{.2cm}

This result is similar to the case $p=2$ due to Hưng and Tuấn \cite{Hung-Tuan-preprint}.

\vspace{.2cm}
\noindent\textbf{Theorem \ref{thm:varphi_1^P}.}
\textit{The Lannes-Zarati homomorphism $\varphi_1^{P}:\Ext{A}{1,1+t}{P}{\fp}\to (\fp\tsor{A}\mathscr{R}_1P)^\#_t$ sends
\begin{enumerate}
\item $h_i\widehat{h}_i(1)$ to $\left[\beta Q^{p^i}ab^{[p^i-1]}\right]$, for $i\geq0$;
\item $h_i\widehat{h}_j$ to $\left[\beta Q^{p^i}ab^{[(p-1)p^j-1]}\right]$ for $0\leq j<i$;
\item $h_i\widehat{h}_j(k)$ to $\left[\beta Q^{p^i}ab^{[kp^j-1]}\right]$ for $0\leq j<i$, $1\leq k<p-1$;
\item $\widehat{k}_i(k)$ to $(\p^0)^i\left(
\left[\beta Q^{k+1}ab^{[k]}\right]\right),i\geq0,1\leq k<p-1$; and
\item others to zero.
\end{enumerate}
}
\vspace{.2cm}

Here, we denote $a^\epsilon b^{[s]}$ the $\fp$-generator of $P^\#=\widetilde{H}_*(B\mathbb{Z}/p)$, which is the dual of $x^\epsilon y^s\in P=\widetilde{H}^*(B\mathbb{Z}/p)$ and $Q^i, \beta Q^i$ denote the generators of the mod $p$ Dyer-Lashof algebra.

It is clear that $[\beta Q^{p-1}b^{[1]}+Q^{p-1}a]$ is non-trivial in $(\fp\tsor{A}\mathscr{R}_1P)^\#$ (see Remark \ref{rm:varphi_1P}). It follows that, basing on Theorem \ref{thm:varphi_1^P}, $\varphi_1^P$ is not an epimorphism. This fact is similar to the case $p=2$ (see Remark \ref{rm:vaphi_1^P for p=2}).

It should be note that our strategy in term of the lambda algebra is also valid for the case $p=2$ with a bit modification. Therefore, using this method, we can review the results of Lannes-Zarati \cite{Lan-Zar87} and Hưng el. al.  \cite{Hung.Peterson1995}, \cite{Hung97}, \cite{Hung2003}, \cite{Hung.et.al2014} \cite{Hung-Tuan-preprint} with a little computation (see Appendix B).

From the results in \cite{Chon_Nhu2019} and Theorem \ref{thm:rank3}, we observe that, for an odd prime $p$, the behavior of the mod $p$ Lannes-Zarati $\varphi_s^M, s>2,$ is similar to the case $p=2$. Basing on the conjecture on spherical classes due to Wellington \cite{Well82} and Conjecture 1.2 in \cite{Hung-Tuan-preprint}, it leads us to a conjecture
\begin{conjecture}[{Cf. \cite[Conjecture 1.2]{Hung-Tuan-preprint}}]\label{con:LZ}
Given an unstable $A$-module $M$, the mod $p$ Lannes-Zarati homomorphism
\[
\varphi_s^M:\Ext{A}{s,s+t}{M}{\fp}\to (\fp\tsor{A}\mathscr{R}_sM)^\#_t
\]
is trivial at all positive stems $t$, for $s>2$.
\end{conjecture}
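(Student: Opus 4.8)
\begin{remark}[Strategy toward Conjecture \ref{con:LZ}]
The plan is to combine the two main technical inputs of this paper: the chain-level representative of $\varphi_s^M$ in the lambda algebra arising from the description of $(\mathscr{R}_sM)^\#$ as a quotient right $A$-module of $R_s\otimes M^\#$, and the power operation $\p^0$ acting on both $\Ext{A}{s}{M}{\fp}$ and $(\fp\tsor{A}\mathscr{R}_sM)^\#$, which is compatible with $\varphi_s^M$ by Proposition \ref{pro:power operations}. At the chain level a class in $\Ext{A}{s,s+t}{M}{\fp}$ is carried by a cycle in the length-$s$ part of the lambda algebra with coefficients in $M^\#$, and the dual of $\varphi_s^M$ is, up to the relations defining $\mathscr{R}_sM$, induced by the projection of the lambda algebra onto the Dyer--Lashof algebra $R$. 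Thus the conjecture is equivalent to the algebraic statement that, for $s>2$ and $t>0$, the image in $R_s\otimes M^\#$ of every length-$s$ lambda cycle that survives to $\Ext{A}{s,s+t}{M}{\fp}$ lies in the span of the Nishida (right $A$-) action, i.e.\ represents $0$ in $\fp\tsor{A}(R_s\otimes M^\#)$.

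I would reduce this in three stages. First, since $\varphi_s^M(\p^0 x)=\p^0\varphi_s^M(x)$, vanishing propagates along $\p^0$-orbits, so it suffices to test it on a set of $\p^0$-generators of $\Ext{A}{s,s+t}{M}{\fp}$ over positive stems. Second, I would exploit the excess/admissibility constraints in $R_s$: a length-$s$ monomial that is nonzero modulo the Nishida relations and lies in a positive stem must have its excesses forced to the edge of the admissible range, which pins it down to an iterated application of low operations to a bottom class of small internal degree; I expect this to collapse the relevant $\p^0$-generators to a short, $s$-indexed list. Third, for each surviving candidate I would run the lambda-algebra differential explicitly to decide permanence and then compute its Dyer--Lashof projection directly, exactly as is done for $s=3$, $M=\fp$ in Theorem \ref{thm:rank3} and for $s\le 1$, $M=P$ in Theorems \ref{thm:varphi_0^P} and \ref{thm:varphi_1^P}.

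The main obstacle is carrying out the third stage uniformly in $s$: it presupposes knowing which length-$s$ lambda cycles survive to $\Ext{A}{s}{M}{\fp}$, which is essentially knowing $\Ext{A}{s}{M}{\fp}$, open for $s\ge 3$. A genuinely uniform proof therefore cannot be purely mechanical; one would need either an indirect argument showing that the Dyer--Lashof projection of \emph{every} permanent cycle is hit without identifying the cycles, or a stable structural input, e.g.\ an Adams-periodicity or Singer-transfer description of the $\p^0$-generators of $\Ext{A}{s}{M}{\fp}$ together with a vanishing line for $\fp\tsor{A}R_s$ in positive stems. A realistic incremental plan is thus: settle $s=4$ and $s=5$ for $M=\fp$ by the explicit method above, paralleling the $p=2$ results of Hưng and collaborators; prove the analogues for $M=P$ in those ranges using the computation of $\Ext{A}{s}{P}{\fp}$ begun here; and then isolate the common vanishing mechanism and attempt to axiomatize it. The heuristic that makes the conjecture plausible is that the image of $\varphi_s^M$ detects spherical classes in the Adams $E_2$-term, of which the conjectures of Curtis, Wellington and Eccles predict essentially none beyond filtration $2$; the non-surjectivity already visible after Theorem \ref{thm:varphi_1^P} is a shadow of the target being ``too large'', which is precisely what should force the vanishing for $s>2$.
\end{remark}
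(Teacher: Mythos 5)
The statement you were asked about is presented in the paper as an open \emph{conjecture}, not a theorem; the paper supplies no proof, only verifications in low homological degree (for $M=\fp$, odd $p$, $s=3$ via Theorem \ref{thm:rank3}, and, citing Hưng and collaborators, for $p=2$ with $M=\f2$, $3\le s\le 5$, and $M=P$, $3\le s\le4$). There is therefore no proof in the paper to measure your text against, and your remark correctly treats the statement as open and proposes a strategy rather than a proof, which is the honest thing to do.

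Your plan is essentially the one the paper uses in the verified cases: represent classes by cycles in $\Lambda_s\otimes M^\#$, push through the projection to $R_s\otimes M^\#$ and then to $(\mathscr{R}_sM)^\#$, propagate vanishing along $\p^0$-orbits via Proposition \ref{pro:power operations}, and finish case by case. One imprecision worth flagging: you assert the conjecture is \emph{equivalent} to every permanent length-$s$ lambda cycle mapping to $0$ in $\fp\tsor{A}(R_s\otimes M^\#)$, but this is only a sufficient condition, not an equivalent one. Since $(\mathscr{R}_sM)^\#$ is a proper quotient of $R_s\otimes M^\#$ (monomials of negative excess already die), and $\fp\tsor{A}(\mathscr{R}_sM)^\#$ is in turn a quotient of $\fp\tsor{A}(R_s\otimes M^\#)$, a cycle can map to $0$ in $\fp\tsor{A}(\mathscr{R}_sM)^\#$ without its Dyer--Lashof image lying in the Nishida span of $R_s\otimes M^\#$. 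The paper formalizes a cleaner sufficient condition in Conjecture \ref{con:cycle in lambda} (permanent cycles have representatives killed already by $\Lambda\otimes M^\#\to(\mathscr{R}_sM)^\#$), and observes that this implies Conjecture \ref{con:LZ}; your three-stage plan effectively rediscovers that stronger conjecture without naming it. Finally, the closing heuristic that non-surjectivity of $\varphi_1^P$ signals a ``too large'' target and hence ``forces'' vanishing at $s>2$ does not hold up as a logical inference; the genuine motivation, which you also cite, is the spherical-class conjectures of Curtis, Wellington and Eccles together with the evidence amassed in low degrees.
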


For $p=2$, the conjecture is verified for $M=\f2$ with $3\leq s\leq 5$ by Hưng et. al. (see \cite{Hung.Peterson1995}, \cite{Hung97}, \cite{Hung2003}, \cite{Hung.et.al2014}), and for $M=\widetilde{H}^*(B\mathbb{Z}/2)$ with $3\leq s\leq 4$ by Hưng-Tuấn \cite{Hung-Tuan-preprint}. For $p$ odd and $M=\fp$, Theorem \ref{thm:rank3} shows that this conjecture is true for $s=3$.

For any $A$-module $M$, the chain complex $\Lambda\otimes M^\#$ is a suitable resolution to compute $\Ext{A}{s}{M}{\fp}$, where $\Lambda$ denote the lambda algebra that is isomorphic to the co-Koszul resolution of the mod $p$ Steenrod algebra (see Priddy \cite{Priddy1970}). In addition, for any unstable $A$-module $M$, the dual of Singer construction $(\mathscr{R}_sM)^\#$ can be considered as a quotient module of $\Lambda\otimes M^\#$ (see Proposition \ref{pro:dual of Singer functor}). Observe from the results of Lin \cite{Lin08}, Chen \cite{Chen2011} and Aikawa \cite{Aikawa1980} that all known elements in $\Ext{A}{s,s+t}{\fp}{\fp}$ for $s>2$ can be represented in the lambda algebra by a cycle whose image is trivial in the mod $p$ Dyer-Lashof algebra under the canonical projection. Therefore, it leads us to a conjecture that
\begin{conjecture}[{Cf. \cite[Conjecture 1.4]{Hung-Tuan-preprint}}]\label{con:cycle in lambda}
Given an unstable $A$-module $M$, for $s>2$, every elements of positive stems in $\Ext{A}{s,s+t}{M}{\fp}$ can be represented by a cycle in $\Lambda\otimes M^\#$ whose image is trivial under the canonical projection $\Lambda\otimes M^\#\to (\mathscr{R}_sM)^\#$.
\end{conjecture}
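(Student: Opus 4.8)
The plan is to work through the chain-level factorisation of $\varphi_s^M$ set up in the body of the paper. By Proposition \ref{pro:dual of Singer functor} the target $(\mathscr{R}_sM)^\#$ is a quotient right $A$-module of $R_s\otimes M^\#$, hence of $\Lambda_s\otimes M^\#$; let $\pi$ denote the resulting degreewise projection $\Lambda_\bullet\otimes M^\#\epira(\mathscr{R}_\bullet M)^\#$, which is a chain map for the differential of $\Lambda\otimes M^\#$, and recall that $\varphi_s^M$ is recovered from it through the natural inclusion $(\fp\tsor{A}\mathscr{R}_sM)^\#\hookrightarrow(\mathscr{R}_sM)^\#$. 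In particular, a cycle $z$ with $\pi(z)=0$ represents a class killed by $\varphi_s^M$, so the conjecture refines Conjecture \ref{con:LZ}. Set $K_\bullet=\ker\pi\subseteq\Lambda\otimes M^\#$, a subcomplex. A class of $\Ext{A}{s,s+t}{M}{\fp}$ has an $R$-trivial representative precisely when it lies in the image of $H^s(K_\bullet)\to H^s(\Lambda\otimes M^\#)=\Ext{A}{s}{M}{\fp}$, so the conjecture is the statement that this map is surjective in homological degree $s>2$ and positive stem --- equivalently, that the map $\Ext{A}{s}{M}{\fp}\to H^s((\mathscr{R}_\bullet M)^\#)$ induced by $\pi$ vanishes there (for which it would suffice, though it is not necessary, that $(\mathscr{R}_\bullet M)^\#$ be acyclic in that range). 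So the first step is combinatorial: describe $\ker\pi$ and $(\mathscr{R}_\bullet M)^\#$ explicitly --- in terms of Dyer--Lashof admissibility and the instability condition relative to $M^\#$ --- and write out the differential induced on the latter via the Nishida relations.

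The second step is to cut down the set of classes that must be handled, using the power operation. By Proposition \ref{pro:power operations} the operation $\p^0$ on $\Ext{A}{s}{M}{\fp}$ is induced by a chain endomorphism $\widetilde{\p^0}$ of $\Lambda\otimes M^\#$ that descends through $\pi$ to the corresponding operation on $(\mathscr{R}_\bullet M)^\#$; hence if $\xi$ has a representative $z$ with $\pi(z)=0$, then $\widetilde{\p^0}(z)$ represents $\p^0\xi$ and still satisfies $\pi(\widetilde{\p^0}(z))=0$. Thus it suffices to treat a set of $\p^0$-module generators of $\bigoplus_{s>2,\,t>0}\Ext{A}{s,s+t}{M}{\fp}$. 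For $M=\fp$ such generators are available in the computed range from Aikawa \cite{Aikawa1980}, Lin \cite{Lin08} and Chen \cite{Chen2011}, and Theorem \ref{thm:rank3} together with the remarks above confirms, by direct inspection, that each of them has an $R$-trivial representative. For a general unstable $M$ one would attempt a d\'evissage along a composition series; granting exactness of the Singer functors, $\pi$ is natural enough that each short exact sequence $0\to M'\to M\to M''\to0$ yields a short exact sequence of subcomplexes $0\to K_\bullet(M'')\to K_\bullet(M)\to K_\bullet(M')\to0$ mapping compatibly into $0\to\Lambda\otimes (M'')^\#\to\Lambda\otimes M^\#\to\Lambda\otimes (M')^\#\to0$. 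The obstruction is that the connecting maps of the induced long exact sequences drop the stem, so a positive-stem class of $\Ext{A}{s}{M}{\fp}$ with $s>2$ may interact with stem-zero classes of the composition factors --- the $\alpha_0$-towers --- which the conjecture says nothing about; the d\'evissage therefore does not by itself reduce the general case to $M=\fp$.

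The main obstacle, in the end, is that no complete description of $\Ext{A}{s}{\fp}{\fp}$ --- still less of $\Ext{A}{s}{M}{\fp}$ for arbitrary unstable $M$ --- is known for all $s>2$, and the $\p^0$-generating set is infinite (already at $s=3$ the classes $h_0h_jh_k\in\Ext{A}{3}{\fp}{\fp}$ lie outside the image of $\p^0$), so the inspection argument cannot be made unconditional. A proof would have to either accompany a structural advance that identifies a tractable $\p^0$-generating set in every homological degree $s>2$ and checks $R$-triviality case by case, or else bypass the case analysis with a direct vanishing argument --- for instance, filtering $\Lambda\otimes M^\#$ by the Dyer--Lashof excess of monomials and showing that the associated graded of $(\mathscr{R}_\bullet M)^\#$ is acyclic in positive stems whenever $s>2$. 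This second route would free the conjecture from explicit $\Ext$ computations, but the required acyclicity is a sharpened form of the Curtis--Wellington spherical-classes phenomenon and is expected to be of the same order of difficulty; controlling the interaction of the Nishida-relation differential with the instability quotient uniformly in $M$ is the real crux, and this is why the statement is offered as a conjecture rather than a theorem.
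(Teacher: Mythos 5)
This statement is a \emph{conjecture}; the paper does not prove it, and offers only the motivating observation that all elements of $\Ext{A}{s,s+t}{\fp}{\fp}$ for $s>2$ known from the computations of Aikawa, Lin and Chen happen to admit $\Lambda$-representatives mapping to zero under the canonical projection, together with the remark that Conjecture \ref{con:LZ} would be a consequence. Your write-up treats the statement correctly as a conjecture rather than attempting a proof: you set up the chain-level projection $\pi\colon\Lambda\otimes M^\#\to(\mathscr{R}_\bullet M)^\#$ from Propositions \ref{pro:dual of Singer functor} and \ref{pro:representation of vaphi_s in lambda}, reformulate the conjecture as surjectivity of $H^s(\ker\pi)\to\Ext{A}{s}{M}{\fp}$ in positive stems (equivalently, vanishing of $\Ext{A}{s}{M}{\fp}\to H^s((\mathscr{R}_\bullet M)^\#)$), note correctly that it strictly refines Conjecture \ref{con:LZ}, and use the $\p^0$-compatibility of Proposition \ref{pro:power operations} to reduce the verification to a set of $\p^0$-module generators, which is exactly the inspection the paper gestures at for $M=\fp$.

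Your diagnosis of the obstacles is also sound: the $\Ext$ groups are not described in all degrees $s>2$, the $\p^0$-generating set is infinite already at $s=3$, and the proposed d\'evissage along a composition series fails to respect stems because the connecting homomorphisms in the long exact sequences mix in stem-zero ($\alpha_0$-tower) classes that the conjecture deliberately excludes. These are precisely the reasons the statement is offered as a conjecture, and nothing in your account contradicts the paper. The one caveat worth registering is that the paper only constructs the chain-level operation $\widetilde{\p}^0$ on $\Lambda\otimes M^\#$ for $M=\fp$ and $M=P$, so your $\p^0$-reduction step, as you partially acknowledge, is not available for arbitrary unstable $M$; for a general $M$ one would first have to extend $\p^0$ and check that it preserves $\ker\pi$, which is an additional gap beyond those you list.
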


It is clear that Conjecture \ref{con:LZ} is a consequence of Conjecture \ref{con:cycle in lambda}.

The algebraic Singer transfer, which is first constructed by Singer (for $p=2$) \cite{Singer1989}, is defined by, for each $A$-module $M$ and for each integer $s\geq0$,
\[
\psi_s^M:(\fp\tsor{A} (P_s\otimes M))^\#\to\Ext{A}{s}{M}{\fp},
\]
where $P_s=H^*(B(\mathbb{Z}/p)^s)$.
Later, it is generalized by Crossley \cite{Crossley1999} for $p$ odd. Here, we show that (up to sign) the canonical inclusion from $\mathscr{R}_sM$ to $P_s\otimes M$ is a chain-level representation of the following map, for any unstable $A$-module $M$ (see Corollary \ref{cor:representation of j_s}),
\[
j_s^M:=(\psi_s^M)^\#\circ(\varphi_s^M)^\#:\fp\tsor{A}\mathscr{R}_sM\to \fp\tsor{A}(P_s\otimes M).
\]

The Conjecture \ref{con:LZ} leads us to a weaker conjecture as follows
\begin{conjecture}[{Cf. \cite[Conjecture 1.6]{Hung-Tuan-preprint}}]
For any unstable $A$-module $M$, the positive component of the Singer construction $\mathscr{R}_sM$ contains in $\bar{A}(P_s\otimes M)$ for $s>2$, where $\bar{A}$ is the augmentation ideal of $A$.
\end{conjecture}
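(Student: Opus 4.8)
The plan is to deduce the statement from Conjecture~\ref{con:LZ}, by recasting it --- via Corollary~\ref{cor:representation of j_s} --- as an assertion about the single comparison map $j_s^M$. As the surrounding discussion indicates, what one obtains in this way is strictly weaker than Conjecture~\ref{con:LZ} (and, a fortiori, than Conjecture~\ref{con:cycle in lambda}), so the argument is necessarily conditional and the real difficulty is concentrated entirely in Conjecture~\ref{con:LZ}.

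First I would fix the comparison map. Write $P_s=H^*(B(\mathbb{Z}/p)^s)$ and let $\iota_s\colon\mathscr{R}_sM\hookrightarrow P_s\otimes M$ be the canonical inclusion of the Singer construction. Since $\iota_s$ is a morphism of $A$-modules, $\iota_s(\bar A\,\mathscr{R}_sM)\subseteq\bar A\,(P_s\otimes M)$, so $\iota_s$ descends to a map
\[
\overline{\iota_s}\colon \fp\tsor{A}\mathscr{R}_sM=\mathscr{R}_sM/\bar A\,\mathscr{R}_sM\longrightarrow (P_s\otimes M)/\bar A\,(P_s\otimes M)=\fp\tsor{A}(P_s\otimes M),
\]
which carries the class of $x\in\mathscr{R}_sM$ to the class of $\iota_s(x)$. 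By Corollary~\ref{cor:representation of j_s}, $\iota_s$ is, up to sign, a chain-level representative of $j_s^M=(\psi_s^M)^\#\circ(\varphi_s^M)^\#$; hence, passing to $\fp\tsor{A}(-)$, one gets $\overline{\iota_s}=\pm\,j_s^M$.

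The key step is the reformulation: identifying $\mathscr{R}_sM$ with its image under $\iota_s$, the assertion ``the positive component of $\mathscr{R}_sM$ lies in $\bar A\,(P_s\otimes M)$'' is \emph{equivalent} to ``$j_s^M$ vanishes in all positive stems''. Indeed, for $x\in\mathscr{R}_sM$ of positive degree one has $\iota_s(x)\in\bar A\,(P_s\otimes M)$ if and only if the class of $\iota_s(x)$ is zero in $\fp\tsor{A}(P_s\otimes M)$, i.e.\ if and only if $\overline{\iota_s}$ kills the class of $x$; and in each positive stem $\fp\tsor{A}\mathscr{R}_sM$ is spanned by classes of such $x$. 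This also makes transparent why the present statement is weaker than Conjecture~\ref{con:LZ}: the vanishing of $j_s^M=(\psi_s^M)^\#\circ(\varphi_s^M)^\#$ does not by itself force $(\varphi_s^M)^\#$, and hence $\varphi_s^M$, to vanish, since the dual Singer transfer $(\psi_s^M)^\#$ is not known to be injective.

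It then remains to close the loop. Assuming Conjecture~\ref{con:LZ}, $\varphi_s^M=0$ in positive stems for every $s>2$, so (all objects being of finite type) $(\varphi_s^M)^\#=0$ in positive stems, hence $j_s^M=0$ in positive stems, and by the previous paragraph the positive component of $\mathscr{R}_sM$ lies in $\bar A\,(P_s\otimes M)$ for $s>2$; the same conclusion also follows from Conjecture~\ref{con:cycle in lambda}, which implies Conjecture~\ref{con:LZ}. One unconditional instance drops out along the way: by Theorem~\ref{thm:rank3}, $\varphi_3^{\fp}$ vanishes in positive stems, so the positive component of $\mathscr{R}_3\fp$ is contained in $\bar A\,P_3$. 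The main obstacle --- and the reason the statement is posed as a conjecture rather than proved --- is establishing Conjecture~\ref{con:LZ}, or even just the a priori weaker vanishing of $j_s^M$, for a general unstable $A$-module $M$ and all $s>2$: at present this is known only in the low-rank cases recorded in the introduction, and no structural mechanism forcing $\mathscr{R}_sM$ into the $A$-decomposables of $P_s\otimes M$ uniformly in $M$ is available.
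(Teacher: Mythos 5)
The statement you were asked to prove is a \emph{conjecture}: the paper offers no proof of it, and indeed explicitly records that for odd $p$ it is still open (it is known only for $p=2$, by H\horn{u}ng--Nam and H\horn{u}ng--Powell). So there is no ``paper's own proof'' to match; what the paper does assert, without detail, is that this statement is a consequence of Conjecture \ref{con:LZ}, and your proposal is essentially a correct write-up of exactly that implication. Your two key points are both sound: (i) by Corollary \ref{cor:representation of j_s} the canonical inclusion $\mathscr{R}_sM\hookrightarrow P_s\otimes M$ induces $\pm j_s^M$ on $\fp\tsor{A}(-)$, so the containment of the positive part of $\mathscr{R}_sM$ in $\bar{A}(P_s\otimes M)$ is equivalent to the vanishing of $j_s^M$ in positive degrees; and (ii) $j_s^M=(\psi_s^M)^\#\circ(\varphi_s^M)^\#$ vanishes wherever $(\varphi_s^M)^\#$ does, which under Conjecture \ref{con:LZ} is all positive stems for $s>2$. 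You are also right that this is strictly weaker than Conjecture \ref{con:LZ} because $(\psi_s^M)^\#$ need not be injective, and that Theorem \ref{thm:rank3} yields the one unconditional instance $M=\fp$, $s=3$. The only thing to be clear about is that this is a conditional reduction, not a proof: the statement itself remains a conjecture, exactly as the paper presents it, and you acknowledge this correctly.
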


The conjecture is proved by Hưng-Nam \cite{Hung.Nam2001} for $M=\f2$ and by Hưng-Powell \cite{Hung-Powell2019} for any unstable $A$-module $M$ with $p=2$. However, for $p$ odd, it is still open.

The paper is organized as follows. Section 2 is a preliminary on the Singer-H\horn{u}ng-Sum chain complex, the lambda algebra as well as the Dyer-Lashof algebra that are required for other sections. In Section 3, we construct a spectral sequence to compute $\Ext{A}{s}{P}{\fp}$. Using this spectral sequence, we calculate the ${\rm Ext}$ groups $\Ext{A}{s}{\widetilde{H}^*(B\mathbb{Z}/p)}{\fp}$ for $s\leq 1$. In section 4, we recall the mod $p$ Lannes-Zarati homomorphism and its chain-level representation in Singer-Hưng-Sum chain complex, which is presented in \cite{Chon_Nhu2019}. In addition, we also describe therein the dual of the Singer construction $\mathscr{R}_sM$ in term of the Dyer-Lashof algebra. The section 5 provides a development of the power operations. The behavior of the mod $p$ Lannes-Zarati homomorphism is presented in the final section. In appendix, we construct a chain-level representation of the algebraic Singer transfer and get a description of the map $j_s^M$. In addition, we make some changes needed for the case $p=2$ and recover all known results of the mod $2$ Lannes-Zarati homomorphism.
\section{Preliminaries}
Unless stated otherwise, we will be working over the prime order field $\fp$, where $p$ is an odd prime. The Steenrod algebra over a fixed odd prime $p$ is denoted by $A$. Let $\mathcal{M}$ denote the category of graded left $A$-modules and degree zero $A$-linear map. A module $M\in\mathcal{M}$ is called unstable if $\beta^\epsilon P^ix=0$ for $\epsilon+2i>{\rm deg}(x)$ and for all $x\in M$. The full subcategory of $\mathcal{M}$  of all unstable modules is denoted by $\mathcal{U}$. Given an $A$-module $M$ and an integer $s$, let $\Sigma^s M$ denote the $s$-th iterated suspension of $M$. By definition,   $(\Sigma^s M)^n=M^{n-s}$, thus an element in degree $n$ of $\Sigma^s M$ is usually written in the form $\Sigma^s m$, where $m \in M^{n-s}$. The action of the Steenrod algebra is given by $\theta(\Sigma^s m)=(-1)^{s\deg\theta}\Sigma^s(\theta m)$, for $\theta\in A$ and $m\in M$.
For any left $A$-module $M$, the linear dual of $M$ is denoted by $M^\#$, which admits a right $A$-module structure.

\subsection{The Singer-H\horn{u}ng-Sum chain complex}
Let $E_s$ be an $s$-dimensional $\fp$-vector space.
It is well-known that the mod $p$ cohomology of the classifying space $BE_s$ is given by
\[
P_s:=H^*BE_s=E(x_1,\dots,x_s)\otimes \fp[y_1,\dots,y_s],
\]
where $(x_1,\cdots,x_s)$ is a basis of $H^1BE_s=\Hom(E_s,\fp)$ and $y_i=\beta(x_i)$ for $1\leq i\leq s$ where $\beta$ denote the Bockstein homomorphism. Here $E(\dots)$ and $\fp[\dots]$ are standard notations for the exterior algebra and the polynomial algebra respectively over $\fp$ generated by the indicated variables.

Let $GL_s$ denote the general linear group $GL_s=GL(E_s)$. The group $GL_s$ acts on $E_s$ and then on $H^*BE_s$.
 The algebra of all invariants of $H^*BE_s$ under the actions of $GL_s$ is computed by Dickson \cite{Dic11} and M\`ui \cite{Mui75}. 
We briefly summarize their results. For any $s$-tuple of non-negative integers $(r_1,\dots,r_s)$, put $[r_1,\dots,r_s]:=\det(y_i^{p^{r_j}})$, and define
\[
L_{s,i}:=[0,\dots,\hat{i},\dots,s]; \quad L_{s}:=L_{s,s};\quad q_{s,i}:=L_{s,i}/L_{s},
\]
for any $1 \leq i \leq s$. 

In particular, $q_{s,s}=1$ and by convention, set $q_{s,i}=0$ for $i<0$. The degree of $q_{s,i}$ is $2(p^s-p^i)$. Define
\[
V_s:=V_s(y_1,\dots,y_s):=\prod_{\lambda_j\in\fp}(\lambda_1y_1+\cdots+\lambda_{s-1}y_{s-1}+y_s).
\]

Another way to define $V_s$ is that $V_s=L_{s}/L_{s-1}$.
Then $q_{s,i}$ can be inductively expressed by the formula
\begin{equation}\label{eq:q_s,i in V_i}
q_{s,i}=q_{s-1,i-1}^p+q_{s-1,i}V_s^{p-1}.
\end{equation}

For non-negative integers $k, r_{k+1},\dots,r_s$, set
\[
[k;r_{k+1},\dots,r_s]:=\frac{1}{k!}
\left|
\begin{array}{ccc}
x_1&\cdots&x_s\\
\cdot&\cdots&\cdot\\
x_1&\cdots&x_s\\
y_1^{p^{r_{k+1}}}&\cdots&y_s^{p^{r_{k+1}}}\\
\cdot&\cdots&\cdot\\
y_1^{p^{r_{s}}}&\cdots&y_s^{p^{r_{s}}}
\end{array}
\right|.
\]
For $0\leq i_1<\cdots< i_k\leq s-1$, we define
\begin{align*}
M_{s;i_1,\dots,i_k}&:=[k;0,\dots, \hat{i}_1,\dots,\hat{i}_k,\dots,s-1],\\
R_{s;i_1,\dots,i_k}&:=M_{s;i_1,\dots,i_k}L_{s}^{p-2}.
\end{align*}
From M\`ui \cite{Mui75}, $M_{s;i}$ can be inductively expressed by the formula
\[
M_{s;i}=M_{s-1;i}V_s+q_{s-1,i}M_{s;s-1}.
\]

The subspace of all invariants of $H^*BE_s$ under the action of $GL_s$ is given by the following theorem.

\begin{theorem}[Dickson \cite{Dic11}, M\`ui \cite{Mui75}] \label{thm:invariant of G}
\begin{enumerate}
\item The subspace of all invariants under the action of $GL_{s}$ of $\fp[y_1,\dots,y_s]$  is given by
\[
D[s]:=\fp[y_1,\dots,y_s]^{GL_s}=\fp[q_{s,0},\dots,q_{s,s-1}].
\]
\item As a $D[s]$-module, $(H^*BE_s)^{GL_s}$ is free and has a basis consisting of $1$ and all elements of $\{R_{s;i_1,\dots,i_k}:1\leq k\leq s, 0\leq i_1<\cdots<i_k\leq s-1 \}$.
\item The algebraic relations are given by
\begin{align*}
R_{s;i}^2&=0,\\
R_{s;i_1}\cdots R_{s;i_k}&=(-1)^{k(k-1)/2}R_{s;i_1,\dots,i_k}q_{s,0}^{k-1}
\end{align*}
for $0\leq i_1<\cdots <i_k<s$.
\end{enumerate}
\end{theorem}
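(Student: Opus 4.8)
The statement is classical: part (1) is Dickson's theorem \cite{Dic11} and parts (2)--(3) are due to M\`ui \cite{Mui75}, so in principle one simply cites these; the plan below reconstructs the arguments.

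\emph{Part (1).} I would start from the additive polynomial $f_s(X):=\prod_w (X-w)$, the product over all $w$ in the $\fp$-subspace $\langle y_1,\dots,y_s\rangle\subseteq\fp[y_1,\dots,y_s]$. Its root set is an $\fp$-subspace, so $f_s$ is a $p$-polynomial, $f_s(X)=X^{p^s}+\sum_{i=0}^{s-1}c_{s,i}X^{p^i}$, and Cramer's rule identifies $c_{s,i}$ with $\pm L_{s,i}/L_s=\pm q_{s,i}$ (the degrees $2(p^s-p^i)$ agree). Since every $g\in GL_s$ permutes the roots $w$, it fixes $f_s$ and hence each $c_{s,i}$, giving $\fp[q_{s,0},\dots,q_{s,s-1}]\subseteq\fp[y_1,\dots,y_s]^{GL_s}$. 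For the reverse inclusion I would run the standard Galois-theoretic argument: each $y_j$ is a root of $f_s$, hence integral over $C:=\fp[q_{s,0},\dots,q_{s,s-1}]$, so $\fp[y_1,\dots,y_s]$ is module-finite over $C$; a Krull-dimension count then forces the $q_{s,i}$ to be algebraically independent, whence $C$ is a polynomial ring and $\fp[y_1,\dots,y_s]$ is $C$-free of rank $\prod_{i=0}^{s-1}(p^s-p^i)=|GL_s(\fp)|$ (Hilbert series). By Artin's lemma the extension $\operatorname{Frac}(\fp[y_1,\dots,y_s])/\operatorname{Frac}(\fp[y_1,\dots,y_s]^{GL_s})$ also has degree $|GL_s(\fp)|$, so $C$ and $\fp[y_1,\dots,y_s]^{GL_s}$ share a fraction field; as $C$ is integrally closed and the invariant ring is integral over it, they coincide.

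\emph{Part (2).} The essential obstacle is that $p\mid|GL_s(\fp)|$, so there is no Reynolds operator and the exterior factor cannot simply be split off. The plan is M\`ui's induction on $s$. First, $L_s$ and each $M_{s;i_1,\dots,i_k}$ transform under $GL_s$ through the determinant character (the relevant $s\times s$ matrix is right-multiplied by the group element, whose entries lie in $\fp$), so $q_{s,i}=L_{s,i}/L_s$ and $R_{s;i_1,\dots,i_k}=M_{s;i_1,\dots,i_k}L_s^{p-2}$ are honestly $GL_s$-invariant because $\det^{p-1}=1$. Next, the factorization $L_s=L_{s-1}V_s$ together with the recursions \eqref{eq:q_s,i in V_i} and $M_{s;i}=M_{s-1;i}V_s+q_{s-1,i}M_{s;s-1}$ lets one express rank-$s$ invariants in terms of rank-$(s-1)$ data after adjoining the one new linear datum $V_s$; feeding in the inductive hypothesis for the copy of $GL_{s-1}$ fixing $y_s$ and $x_s$ yields that $1$ and the $R_{s;i_1,\dots,i_k}$ generate $(H^*BE_s)^{GL_s}$ over $D[s]$. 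Freeness and the precise basis then reduce to $D[s]$-linear independence of this list: clearing denominators into the domain $\fp[y_1,\dots,y_s]$ turns a relation, in each exterior degree $k$, into an $\operatorname{Frac}(\fp[y_1,\dots,y_s])$-linear dependence among the $\binom{s}{k}$ elements $M_{s;i_1,\dots,i_k}$, i.e.\ the vanishing of a generalized Moore--Vandermonde determinant, which is nonzero.

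\emph{Part (3).} That $R_{s;i}^2=0$ is immediate: $M_{s;i}=[1;0,\dots,\hat i,\dots,s-1]$ has a single row of $x$'s, so Laplace expansion along that row exhibits $M_{s;i}$ as an element of exterior degree one, which squares to zero in $E(x_1,\dots,x_s)$; hence $R_{s;i}^2=M_{s;i}^2L_s^{2(p-2)}=0$. For the product relation one expands $R_{s;i_1}\cdots R_{s;i_k}=(M_{s;i_1}\cdots M_{s;i_k})L_s^{k(p-2)}$ via a Laplace/Cauchy--Binet identity: the product of the $k$ one-row ``mixed'' determinants reassembles, up to the sign $(-1)^{k(k-1)/2}$ coming from collecting the $x$-rows, into $M_{s;i_1,\dots,i_k}$ times a power of $L_s$; matching that power against the degree bookkeeping and using $L_s^{p-1}=\pm q_{s,0}$ (both $GL_s$-invariant of degree $2(p^s-1)$, a one-dimensional piece of $D[s]$) gives $R_{s;i_1}\cdots R_{s;i_k}=(-1)^{k(k-1)/2}R_{s;i_1,\dots,i_k}q_{s,0}^{k-1}$. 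The only genuinely delicate points are the sign bookkeeping and the nonvanishing of the generalized determinants underlying the linear-independence step in (2)--(3); everything else is formal.
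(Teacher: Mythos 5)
The paper gives no proof of this theorem: it is stated as a citation to Dickson and M\`ui, so there is no in-paper argument to compare against. Your reconstruction is a reasonable outline of the classical arguments, and the overall architecture (Dickson's additive polynomial plus a Galois/Krull-dimension count for part (1); determinant-character transformation plus the $V_s$-recursions and a nonvanishing-determinant argument for part (2); exterior-degree-one vanishing plus M\`ui's product identity for part (3)) is the right one.

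Two substantive remarks. First, you label the identity
\[
M_{s;i_1}\cdots M_{s;i_k}=(-1)^{k(k-1)/2}M_{s;i_1,\dots,i_k}\,L_s^{k-1}
\]
as following from ``a Laplace/Cauchy--Binet identity,'' but this is precisely the nontrivial determinantal lemma of M\`ui's paper; it does not drop out of Laplace expansion and needs its own proof (by an inductive argument comparing both sides as alternating $\fp[y]$-multilinear forms in the $x$'s, or by specialization). Similarly, the inductive step in part (2) --- passing from $GL_{s-1}$-invariants to $GL_s$-invariants using $L_s=L_{s-1}V_s$ and the recursions, and then verifying $D[s]$-freeness --- is the core of M\`ui's argument and is left at the level of a gesture; ``a generalized Moore--Vandermonde determinant, which is nonzero'' elides exactly the computation one would need to supply. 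As a proof sketch matching the cited sources this is acceptable; as a self-contained proof those two points would have to be filled in.

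Second, a small simplification: $q_{s,0}=L_s^{p-1}$ holds on the nose, with no sign ambiguity, since $L_{s,0}=[1,\dots,s]=(\det(y_i^{p^{j-1}}))^p=L_s^p$ by Frobenius; your ``$\pm q_{s,0}$'' is unnecessarily cautious, and the sign $(-1)^{s+i}$ relating $c_{s,i}$ to $q_{s,i}$ in part (1) can likewise be pinned down directly from the cofactor expansion of the Moore determinant.
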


Let $\Phi_s:=H^*BE_s[L_s^{-1}]$ be the localization of $H^*BE_s$ obtained by inverting $L_s$. It should be noted that $L_s$ is the product of all non-zero linear forms of $y_1,\dots,y_s$. So inverting $L_s$ is equivalent to inverting all these forms. The action of $GL_s$ on $H^*BE_s$ extends an action of it on $\Phi_s$. Set
\[
\Delta_s:=\Phi_s^{T_s}, \quad \Gamma_s:=\Phi_s^{GL_s},
\]
where $T_s$ is the subgroup of $GL_s$ consisting of all upper triangle matrices with $1$'s on the main diagonal.

Put $u_i:=M_{i;i-1}/L_{i-1}$ and $v_i:=V_i/q_{i-1,0}$, then $|u_i|=1$ and $|v_i|=2$. From \cite{Hung.Sum1995}, we have
\begin{align*}
\Delta_s&=E(u_1,\dots,u_s)\otimes\fp[v_1^{\pm1},\dots,v_s^{\pm1}],\\
\Gamma_s&=E(R_{s;0},\dots,R_{s;s-1})\otimes\fp[q_{s,0}^{\pm1},q_{s,1},\dots,q_{s,s-1}].
\end{align*}

Let $\Delta_s^+$ be the subspace of $\Delta_s$ spanned by all monomials of the form $$u_1^{\epsilon_1}v_1^{(p-1)i_1-\epsilon_1}\cdots u_s^{\epsilon_s}v_s^{(p-1)i_s-\epsilon_s},\epsilon_i\in\{0,1\}, 1\leq i\leq s, i_1\geq\epsilon_1,$$
and let $\Gamma_s^+:=\Gamma_s\cap \Delta_s^+$.

From \cite{Hung.Sum1995}, $\Gamma^+:=\{\Gamma_s^+\}_{s\geq0}$ is a graded differential $\fp$-module with the differential induced by
\begin{equation}\label{eq:diff of gamma}
\begin{split}
\partial(u_1^{\epsilon_1}v_1^{i_1}&\cdots u_s^{\epsilon_s}v_s^{i_s})=
\\
&\left\{
\begin{array}{ll}
(-1)^{\epsilon_1+\cdots+\epsilon_{s-1}}u_1^{\epsilon_1}v_1^{i_1}\cdots u_{s-1}^{\epsilon_{s-1}}v_{s-1}^{i_{s-1}},&\epsilon_s=-i_s=1;\\
0,&\text{otherwise},
\end{array}
\right.
\end{split}
\end{equation}
where $\Gamma_0^+=\fp$.

For any $A$-module $M$, define the stable total power $S_s(x_1,y_1,\dots,x_s,y_s;m)$, for $m\in M$, as follows (see H\horn{u}ng-Sum \cite{Hung.Sum1995})
\begin{align*}
S_s(x_1,y_1,&\dots,x_s,y_s;m):=\\
&\sum_{\tiny \begin{array}{c}\epsilon_j=0,1,\\i_j\geq0\end{array}}(-1)^{\epsilon_1+i_1+\cdots+\epsilon_s+i_s}u_s^{\epsilon_s}\cdots u_1^{\epsilon_1}v_1^{-(p-1)i_1-\epsilon_1}\cdots v_s^{-(p-1)i_s-\epsilon_s}\\
&\quad\quad\quad\quad\quad\quad\quad\quad\quad\quad\otimes (\beta^{\epsilon_1}P^{i_1}\cdots\beta^{\epsilon_s}P^{i_s})(m).
\end{align*}
For convenience, we put $S_s(m):=S_s(x_1,y_1,\dots,x_s,y_s;m)$, and $S_s(M):=\{S_s(m):m\in M\}$.

Then $\Gamma^+M:=\{(\Gamma^+M)_s\}_{s\geq0}$, where $(\Gamma^+M)_0:=M$ and $(\Gamma^+M)_s:=\Gamma_s^+S_s(M)$, is a differential $\fp$-module. For $v=\sum_{\epsilon,\ell}v_{\epsilon,\ell}u_s^\epsilon v_s^{(p-1)\ell-\epsilon}\in\Gamma_s^+$ and $m\in M$, where $v_{\epsilon,\ell}\in \Gamma_{s-1}^+$, the differential in $\Gamma^+M$ is given by
\begin{equation}\label{eq:diff of gamma tensor M}
\partial(v S_s(m))=(-1)^{\deg v+1}\sum_{\epsilon,\ell}(-1)^\ell v_{\epsilon,\ell}S_{s-1}(\beta^{1-\epsilon}P^\ell m).
\end{equation}

In \cite{Hung.Sum1995}, H\horn{u}ng and Sum show that $H_s(\Gamma^+M)\cong \Tor{A}{s}{\fp}{M}$ for any $A$-module $M$. Therefore, $\Gamma^+M$ is a suitable complex to compute $\Tor{A}{s}{\fp}{M}$.
\subsection{The lambda algebra and the Dyer-Lashof algebra}

In \cite{Bousfield.etal1966}, Bousfield et. al. define the lambda algebra (see also Bousfield-Kan \cite{Bousfield-Kan1972}), that is a differential algebra for computing the cohomology of the Steenrod algebra.
H\horn{u}ng and Sum show, in \cite{Hung.Sum1995}, that $\Gamma^+$ is isomorphic to the dual of the lambda algebra as differential $\fp$-modules. However, it is difficult to extend their isomorphim to an isomorphism between chain complexes $\Gamma^+M$ and $\Lambda^\#\otimes M$ because the sign is not compatible. To overcome the difficulty, here we use the opposite algebra of the lambda algebra (see Priddy \cite{Priddy1970}), which corresponds to the original lambda algebra under the anti-isomorphism of differential $\fp$-modules. It is also denoted by $\Lambda$ and called the lambda algebra in the literature.

Recall that $\Lambda$ is the graded, associative, with unite differential algebra over $\fp$ generated by
$\lambda_{i-1}\ (i>0)$ of degree $2i(p-1)-1$ and $\mu_{j-1}\ (j\geq0)$ of degree $2j(p-1)$ satisfying the adem relations (see \cite{Bousfield.etal1966} \cite{Bousfield-Kan1972}, \cite{Well82} and \cite{Priddy1970}) 
{\allowdisplaybreaks
\begin{align*}
\sum_{i+j=n}&\binom{i+j}{i}\lambda_{i-1+pm}\lambda_{j-1+m}=0,\\
\sum_{i+j=n}&\binom{i+j}{i}(\lambda_{i-1+pm}\mu_{j-1+m}-\mu_{i-1+pm}\lambda_{j-1+m})=0,\\
\end{align*}
for all $m\geq1$ and $n\geq 0$; and
\begin{align*}
\sum_{i+j=n}&\binom{i+j}{i}\lambda_{i+pm}\mu_{j-1+m}=0,\\
\sum_{i+j=n}&\binom{i+j}{i}\mu_{i+pm}\mu_{i-1+m}=0,
\end{align*}
}
for all $m\geq0$ and $n\geq 0$.

The differential is given by
\begin{align*}
d(\lambda_{n-1})&=\sum_{i+j=n}\binom{i+j}{i}\lambda_{i-1}\lambda_{j-1},\\
d(\mu_{n-1})&=\sum_{i+j=n}\binom{i+j}{i}(\lambda_{i-1}\mu_{j-1}-\mu_{i-1}\lambda_{j-1}),\\
d(\sigma\tau)&=(-1)^{\deg \sigma}\sigma d(\tau)+d(\sigma)\tau.
\end{align*}

For convenience,  we denote $\lambda_{i-1}^1=\lambda_{i-1}$ and the $\lambda^0_{i-1}=\mu_{i-1}$. Let $\Lambda_s$ denote the subspace of $\Lambda$ spanned by all monomial $\lambda^{\epsilon_1}_{i_1-1}\cdots\lambda^{\epsilon_s}_{i_s-1}$ of the length $s$.
By the adem relations, $\Lambda_s$ has an additive basis consisting of all admissible monomials (which are monomials of the form $\lambda_I=\lambda^{\epsilon_1}_{i_1-1}\cdots\lambda^{\epsilon_s}_{i_s-1}\in\Lambda_s$ satisfying $pi_k-\epsilon_k\geq i_{k-1}$ for $2\leq k\leq s$). 

Given an $A$-module $M$, $\Lambda\otimes M^\#$ is a suitable complex for computing $\Ext{A}{s}{M}{\fp}$, with the differential give by, for $\lambda\in\Lambda$ and $h\in M^\#$,
\begin{align}\label{eq:differential of complex of M}
d(\lambda\otimes h)&=d(\lambda)\otimes h+\sum_{i-\epsilon\geq0}(-1)^{\deg\lambda+(1-\epsilon)\deg h}\lambda\lambda^{\epsilon}_{i-1}\otimes h\beta^{1-\epsilon}P^i.
\end{align}

Using the same method of H\horn{u}ng-Sum \cite{Hung.Sum1995} with a bit modification by multiplying $-1$ to the right hand side of definitions of both operations $\rho$ and $\chi$ (see \cite[Section 5]{Hung.Sum1995}), it is easy to show that the map $\nu=\{\nu_s\}_{s\geq0}$, where $\nu_{s}:\Gamma_s^+\to \Lambda_s^\#$ is given by
\[
\nu_s(u_1^{\epsilon_1}v_1^{(p-1)i_1-\epsilon_1}\cdots u_s^{\epsilon_s}v_s^{(p-1)i_s-\epsilon_s})=(-1)^{i_1+\cdots+i_s+\sum_{\ell<k}\epsilon_\ell\epsilon_k}(\lambda_{i_1-1}^{\epsilon_1}\cdots\lambda_{i_s-1}^{\epsilon_s})^*,
\]
is an isomorphism of differential $\fp$-modules. Moreover, for $A$-module $M$, the map $\nu^M:=\{\nu_s^M\}_{s\geq0}:\Gamma^+M\rightarrow \Lambda^\#\otimes M$ given by $\nu_s^M(vSt_s(m))=\nu_s(v)\otimes m$, for $v\in \Gamma^+_s$ and $m\in M$, is an isomorphism of differential $\fp$-modules.

An important quotient algebra of $\Lambda$ is the mod $p$ Dyer-Lashof algebra $R$, which is also well-known as the algebra of homology operations acting on the homology of infinite loop spaces. 

For any monomial $\lambda_I=\lambda^{\epsilon_1}_{i_1-1}\cdots\lambda^{\epsilon_s}_{i_s-1}\in\Lambda_s$, we define the excess of $\lambda_I$ or of $I$ to be
\[
e(\lambda_I)=e(I)=2i_1-\epsilon_1-\sum_{k=2}^{s}2(p-1)i_k+\sum_{k=2}^s\epsilon_s.
\]
Then, the mod $p$ Dyer-Lashof algebra is the quotient algebra of $\Lambda$ over the (two-side) ideal generated by all monomials of negative excess (see Curtis \cite{Cur75}, Wellington \cite{Well82}).

Let $Q^I=\beta^{\epsilon_1}Q^{i_1}\cdots\beta^{\epsilon_s}Q^{i_s}$ denote the image of $\lambda_I$ under the canonical projection, and
let $R_s$ denote the subspace of $R$ spanned by all monomials of length $s$, then $R_s$ is isomorphic to $\mathscr{B}[s]^\#$ as $A$-coalgebras, where the $A$-action on $R$ is given by the  Nishida's relation (see May \cite{Coh-Lad-May76}).

From the above result, we observe that the restriction of $\nu_s$ on $\mathscr{B}[s]$ is isomorphism between $\mathscr{B}[s]$ and $R_s^\#$.

\section{The cohomology of the Steenrod algebra}\label{sec:cohomology}
In this section, we construct a spectral sequence to compute $\Ext{A}{s}{P}{\fp}$. This spectral sequence is a generalized version of one used in Lin \cite{Lin08} and Chen \cite{Chen2011} for $p$ odd.

Let $H:=\widetilde{H}_*(B\mathbb{Z}/p)=P^\#$. It is known that $H$ has an $\fp$-basis consisting of all elements
\[
\{a^\epsilon b^{[t]}:\epsilon=0,1,s\geq0, t+\epsilon>0\},
\]
where $a,b^{[t]}$ are respectively the dual of $x$ and $y^t$ in $P$. In addition, $H$ admits a right $A$-module structure, with $A$-action given by
\begin{equation}\label{eq:action on H}
b^{[t]}\beta^{\epsilon}P^k=\binom{t-(p-1)k-\epsilon}{k}a^{\epsilon}b^{[t-(p-1)k-\epsilon]},
\end{equation}
and $\theta$ acting trivially on $a$ for $\theta\in \bar{A}$, where $\bar{A}$ is the augmentation ideal of $A$.

Recall that $\Lambda$ denote the lambda algebra. Then $\Lambda\otimes H$ is a suitable complex to compute $\Ext{A}{s}{P}{\fp}$. The differential of $\Lambda\otimes H$ is given by, for $\lambda\in\Lambda$,
\begin{multline}\label{eq:differential of complex of H}
d(\lambda\otimes a^\epsilon b^{[t]})=d(\lambda)\otimes a^\epsilon b^{[t]}+\sum_{i-\delta\geq0}(-1)^{\deg\lambda+(1-\delta)\epsilon}\\\times\binom{t-(p-1)i-1+\delta}{i}\lambda\lambda^{\delta}_{i-1}\otimes a^{\epsilon+1-\delta}b^{[t-(p-1)i-1+\delta]}.
\end{multline}

From Liulevicius \cite{Liulevicius1960}, \cite{Liu62} and May \cite{May70}, there exists a power operation $$\p^0:\Ext{A}{s,s+t}{\fp}{\fp}\to \Ext{A}{s,p(s+t)}{\fp}{\fp}.$$ Its chain-level representation in the lambda algebra is given by
\begin{align*}
\widetilde{\p}^0:\Lambda_s&\to\Lambda_s\\
\lambda^{\epsilon_1}_{i_1-1}\cdots\lambda^{\epsilon_s}_{i_s-1}&\mapsto\left\{
\begin{array}{ll}
\lambda^{\epsilon_1}_{pi_1-1}\cdots\lambda^{\epsilon_s}_{pi_s-1},&\epsilon_1=\cdots=\epsilon_s=1,\\
0,&\text{otherwise.}
\end{array}
\right.
\end{align*}
The operation $\widetilde{\p}^0$ respects the adem relations and commutes with the differential in $\Lambda$.

Define the operation $\theta:H_{t}\to H_{p(t+1)-1}$ given by
\[
\theta(a^\epsilon b^{[t]})=\left\{
\begin{array}{ll}
ab^{[p(t+1)-1]},& \epsilon=1,\\
0,& \epsilon=0,
\end{array}
\right.
\]
which is the dual of the so-called Kameko operation \cite{Kam90} (see also Minami \cite{Min99} for an odd prime $p$).
Since, $(\theta(ab^{[t]}P^i)=(\theta(ab^{[t]}))P^{pi}$, there exists an operation, which is also denoted by $\widetilde{\p}^0$, acting on the chain complex $\Lambda\otimes H$, given as follows
\[
\lambda^{\epsilon_1}_{i_1-1}\cdots\lambda^{\epsilon_s}_{i_s-1}\otimes a^\epsilon b^{[t]}\mapsto\left\{
\begin{array}{ll}
\lambda^{\epsilon_1}_{pi_1-1}\cdots\lambda^{\epsilon_s}_{pi_s-1}\otimes ab^{[p(t+1)-1]},&\epsilon_1=\cdots=\epsilon_s=\epsilon=1,\\
0,&\text{otherwise.}
\end{array}
\right.
\]
The latter commutes with the differential given in \eqref{eq:differential of complex of H}. Therefore, it induces an operation also denoted by $\p^0$ acting on $\Ext{A}{s,*}{P}{\fp}$.

The Ext groups $\Ext{A}{s}{P}{\fp}$ for $s\leq1$ are given by the following theorems.

\begin{theorem}[{Cf. \cite[Theorem 1.1]{Crossley1996}}]\label{thm:Ext(P)^0}
The Ext group $\Ext{A}{0,t}{P}{\fp}$ has an $\fp$-basis consisting of all elements
\begin{enumerate}
\item $\widehat{h}_i:=\left[ab^{[(p-1)p^i-1]}\right]\in \Ext{A}{0,2(p-1)p^i-1}{P}{\fp},i\geq0$; 
\item $\widehat{h}_{i}(k):=\left[ab^{[kp^i-1]}\right]\in \Ext{A}{0,2kp^i-1}{P}{\fp},i\geq0, 1\leq k<p-1$.
 \end{enumerate}
\end{theorem}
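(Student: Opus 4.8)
The plan is to compute $\Ext{A}{0,t}{P}{\fp}=\Hom_A(P,\Sigma^t\fp)$ directly, since $\Ext{A}{0}{M}{\fp}$ is just the $\fp$-dual of the indecomposables $\fp\tsor{A}M$; equivalently, working with the lambda-algebra model $\Lambda\otimes H$ from the excerpt, $\Ext{A}{0,t}{P}{\fp}$ is the kernel of the differential \eqref{eq:differential of complex of H} restricted to $\Lambda_0\otimes H=H$, i.e.\ the set of $x\in H_t$ with $x\beta^{1-\delta}P^i=0$ for all $i,\delta$ with $i-\delta\geq 0$ (no coboundaries can land in length zero). So the whole theorem reduces to identifying the $A$-annihilated elements of $H=\widetilde H_*(B\mathbb Z/p)$, a purely combinatorial question about the right $A$-action given in \eqref{eq:action on H}.

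First I would observe that $a\in H_1$ is killed by every element of $\bar A$ by definition, so $a=ab^{[0]}$ (the class $\widehat h_0(1)$ when $k=1$, $i=0$, of internal degree $1=2\cdot 1\cdot p^0-1$) is always in the Ext group; more generally I need to find all $t$ and all linear combinations $\sum c_t a b^{[t]}+\sum c'_t b^{[t]}$ annihilated by all $\beta^\epsilon P^i$. Since the $A$-action sends $b^{[t]}\beta^\epsilon P^k$ into the $a$-part and $b$-classes map to $b$-classes only under the identity, and since the $\beta^\epsilon P^i$ act on distinct monomials, the problem splits and the condition on $x=ab^{[t]}$ is that the binomial coefficient $\binom{t-(p-1)k-\epsilon}{k}\equiv 0\pmod p$ for \emph{all} admissible $(\epsilon,k)$ with $k\geq 1$ (and $\epsilon+2k\leq$ degree, though unstability is automatic here). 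The key step is then the number-theoretic lemma: $\binom{t-(p-1)k-\epsilon}{k}\equiv 0\pmod p$ for all $k\geq 1$, $\epsilon\in\{0,1\}$ if and only if $t$ has the form $(p-1)p^i$ (giving $\widehat h_i$, degree $2(p-1)p^i-1$) or $kp^i$ with $1\leq k<p-1$ (giving $\widehat h_i(k)$, degree $2kp^i-1$) — using Lucas' theorem on the base-$p$ expansion of $t$, exactly as in Crossley \cite{Crossley1996}.

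The routine-but-careful part is the Lucas-theorem bookkeeping: writing $t=\sum a_j p^j$ in base $p$, one checks that the vanishing of all these coefficients forces $t-1$ (the internal degree shift accounts for the $-1$) to have a base-$p$ expansion that is a single ``block'' of $(p-1)$'s, equivalently $t=cp^i$ with $1\le c\le p-1$; the two sub-cases $c=p-1$ and $c<p-1$ are exactly cases (1) and (2) of the statement, and one also verifies these $t$ indeed annihilate $a$ against \emph{every} $\beta^\epsilon P^k$, not just the ones with $\epsilon=0$. I also need to note that no $b^{[t]}$ (the $\epsilon=0$ part of $H$) can survive, since $b^{[t]}\beta^0 P^k$ with $k$ chosen so that $\binom{t-(p-1)k}{k}\not\equiv 0$ is nonzero for $t$ large enough, and the low-degree cases are checked by hand; so the basis consists only of the $a$-classes listed. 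The main obstacle is purely the combinatorial lemma on when all the relevant binomial coefficients vanish mod $p$ — everything else is formal once the chain complex description \eqref{eq:differential of complex of H} and the action \eqref{eq:action on H} are in hand.
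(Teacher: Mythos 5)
Your reduction is correct in spirit and more direct than the paper's. You observe that $\Ext{A}{0,t}{P}{\fp}$ is just the kernel of the differential $d:\Lambda_0\otimes H\to\Lambda_1\otimes H$, i.e.\ the $A$-annihilated subspace of $H=\widetilde H_*(B\mathbb Z/p)$, and then reduce to a Lucas-type binomial congruence. The paper instead sets up a filtration spectral sequence on $\Lambda\otimes H$ (needed later for $\Ext^1$) and reads the answer off from the $s=0$ differentials in Lemma \ref{lm:diff of degree 0}; for $s=0$ the spectral sequence is essentially the direct computation repackaged, since $E_1^{n,0,*}\cong\Sigma^n\fp$ is one copy of $\fp$ per basis element $a^\epsilon b^{[m]}$. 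So the two arguments compute the same thing, and your phrasing is cleaner at this homological degree, while the paper's framing pays off in the next theorem.

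However, there is a genuine (if small) gap in the step that rules out the $b^{[m]}$-classes. You assert that ``no $b^{[t]}$ can survive, since $b^{[t]}\beta^0 P^k$ with $k$ chosen so that $\binom{t-(p-1)k}{k}\not\equiv 0$ is nonzero for $t$ large enough.'' This is false: for $m=p^n-1$ one has $\binom{m-(p-1)k}{k}\equiv 0\pmod p$ for \emph{every} $k\geq 1$, so no $P^k$ alone detects $b^{[p^n-1]}$. The correct and simpler argument is the one the paper uses in Lemma \ref{lm:diff of degree 0}(\ref{diff: 0.1}): the Bockstein already does the job, since $b^{[m]}\beta=ab^{[m-1]}\neq 0$ for every $m\geq1$. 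Relatedly, for the $ab^{[m]}$-classes your caution about ``checking $\epsilon=1$ as well'' is unnecessary (and slightly confused): every $\beta P^k$ sends $ab^{[m]}$ to a multiple of $a^2b^{[\ast]}=0$, as is visible in \eqref{eq:differential of complex of H}, so only the $P^k$'s constrain $ab^{[m]}$. Finally, the Lucas bookkeeping in your last paragraph suffers some off-by-one slippage between ``$t$ = exponent of $b$'' and ``$t$ = internal degree'': the condition is that $m+1$ (not $t-1$) has a single nonzero base-$p$ digit, i.e.\ $m=cp^i-1$ with $1\leq c\leq p-1$, and the case $c=p-1$ gives $\widehat h_i$ while $1\leq c<p-1$ gives $\widehat h_i(c)$. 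With these two repairs the argument is complete.
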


\begin{theorem}\label{thm: Ext(P)^1}
The Ext group $\Ext{A}{1,1+t}{P}{\fp}$ has an $\fp$-basis consisting of all elements given by the following list
\begin{enumerate}
\item $\alpha_0\widehat{h}_i=\left[\lambda^0_{-1}ab^{[(p-1)p^i-1]}\right],i\geq1$;
\item $\alpha_0\widehat{h}_{i}(k)=\left[\lambda^0_{-1}ab^{[kp^i-1]}\right],i\geq1,1\leq k<p-1$;
\item $\widehat{\alpha}(\ell)=\left[\lambda^0_{-1}ab^{[p+\ell]}+(\ell+1)\lambda^0_0ab^{\ell+1}\right],0\leq\ell<p-2$;
\item $h_i\widehat{h}_i(1)=\left[\lambda^1_{p^i-1}ab^{[p^i-1]}\right], i\geq0$;
\item $h_i\widehat{h}_j=\left[\lambda^1_{p^i-1}ab^{[(p-1)p^j-1]}\right],i,j\geq0, j\neq i, i+1$;
\item $h_i\widehat{h}_{j}(k)=\left[\lambda^1_{p^i-1}ab^{[kp^j-1]}\right], i,j\geq0, j\neq i, i+1,1\leq r<p-1$;
\item $\widehat{d}_i(k)=(\p^0)^{i-1}\left(\left[\lambda^1_{p-1}ab^{[kp+p-2]}\right]\right),i\geq1,1\leq k\leq p-1$;
\item $\widehat{k}_i(k)=(\p^0)^i\left(
\left[\sum_{j=0}^{k}\frac{1}{j+1}\lambda^{1}_{j}ab^{[(k-j)p+j]}\right]\right),i\geq0,1\leq k<p-1$;
\item $\widehat{p}_i(r)=(\p^0)^i\left(\left[\sum_{j=0}^{p-1-r}\frac{\binom{r+j}{j}}{j+1}\lambda^1_{j}ab^{[(p-j-1)p+r+j]}\right]\right),i\geq0$, $1\leq r<p-1$.
\end{enumerate}

The decomposable elements in $\Ext{A}{s,s+t}{P}{\fp}$ for $s\leq 1$ satisfy only the following relations:
\begin{itemize}
\item $h_i\widehat{h}_{i+1}=0, i\geq0$;
\item $h_i\widehat{h}_{i+1}(k)=0, i\geq0, 1\leq k<p-1$;
\item $h_i\widehat{h}_i=0$, $i\geq0$;
\item $h_i\widehat{h}_i(k)=0$, $i\geq0, 2\leq k<p-1$;
\item $\alpha_0\widehat{h}_0=0$; and
\item $\alpha_0\widehat{h}_0(k)=0,1\leq k<p-1$.
\end{itemize}
\end{theorem}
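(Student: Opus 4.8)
\textbf{Proof strategy for Theorem \ref{thm: Ext(P)^1}.}
The plan is to compute $\Ext{A}{1,1+t}{P}{\fp}$ directly as the first cohomology of the cochain complex $(\Lambda\otimes H, d)$ with differential \eqref{eq:differential of complex of H}, using the spectral sequence constructed earlier in this section together with the already-established computation of $\Ext{A}{0,t}{P}{\fp}$ in Theorem \ref{thm:Ext(P)^0}. Concretely, $\Ext{A}{1}{P}{\fp}$ is the homology at the middle spot of
\[
\Lambda_0\otimes H \rmap{d_0} \Lambda_1\otimes H \rmap{d_1} \Lambda_2\otimes H,
\]
so we must determine $\Ker d_1$ modulo $\operatorname{im} d_0$. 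The first step is to set up the bookkeeping: fix the internal degree $1+t$, list the admissible monomials $\lambda^{\delta}_{i-1}\otimes a^\epsilon b^{[m]}$ of that bidegree, and use the spectral sequence (filtering $H$ by the $a$-adic or length filtration as in Lin \cite{Lin08} and Chen \cite{Chen2011}) to reduce to a more tractable associated graded complex. The $E_1$-page should split into a piece coming from $a\cdot(\text{the lambda-algebra cohomology of }\fp)$ and a correction piece involving the $b^{[m]}$ with $\epsilon=0$; the differentials in \eqref{eq:differential of complex of H} carry the binomial coefficients $\binom{m-(p-1)i-1+\delta}{i}$, whose vanishing pattern mod $p$ is what ultimately pins down the surviving classes.

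The second step is to identify the cocycles. Here I would split into the families as they appear in the statement: (i)--(ii) the classes $\alpha_0\widehat h_i$, $\alpha_0\widehat h_i(k)$ obtained by multiplying the degree-zero generators $\widehat h_i$, $\widehat h_i(k)$ of Theorem \ref{thm:Ext(P)^0} by $\alpha_0=[\lambda^0_{-1}]$, checking these survive (equivalently, that the corresponding product in $\Ext{A}{1}{\fp}{\fp}$-module structure is nonzero) except precisely when $i=0$, which accounts for the relations $\alpha_0\widehat h_0=0$, $\alpha_0\widehat h_0(k)=0$; (iv)--(vi) the classes $h_i\widehat h_j$, $h_i\widehat h_j(k)$, $h_i\widehat h_i(1)$ obtained from $h_i=[\lambda^1_{p^i-1}]$, where the adem relations in $\Lambda$ plus the $A$-action on $H$ via \eqref{eq:action on H} force $h_i\widehat h_{i+1}=0$ and $h_i\widehat h_i=0$ (and $h_i\widehat h_i(k)=0$ for $k\geq 2$), leaving exactly the listed survivors; and (iii), (vii), (viii), (ix) the "new" indecomposable families $\widehat\alpha(\ell)$, $\widehat d_i(k)$, $\widehat k_i(k)$, $\widehat p_i(r)$, which I would first exhibit as explicit cocycles (the $d_i$, $k_i$, $p_i$ via the $\widetilde{\p}^0$-operation applied to low-degree generators, using that $\widetilde{\p}^0$ commutes with $d$), then show they are not boundaries by a degree/leading-term argument.

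The third step — and I expect this to be the main obstacle — is \emph{completeness}: showing the list (1)--(9) exhausts a basis, i.e. that $\dim_{\fp}\Ext{A}{1,1+t}{P}{\fp}$ in each internal degree equals the number of listed generators in that degree, and that no further relations hold among decomposables. This requires a careful Euler-characteristic / rank count in the associated graded complex of the spectral sequence: one must show the $E_1$-differential has exactly the expected rank in every degree, which amounts to a delicate analysis of when the binomial coefficients $\binom{m-(p-1)i-1+\delta}{i}$ vanish mod $p$ (Lucas' theorem) across the admissible range. The $\widetilde{\p}^0$-action is the key organizational tool that makes this feasible: it lets one propagate the answer from the finitely many "primitive" degrees (those not in the image of $\widetilde{\p}^0$) to all degrees, so the count need only be verified by hand in a bounded range and then extended by naturality of $\p^0$. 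A secondary subtlety is the collapse of the spectral sequence at $E_2$ (or the identification of any higher differentials), which I would handle by comparing total dimensions against the explicit cocycle list produced in step two, so that the two counts squeeze each other.
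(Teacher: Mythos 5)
Your proposal identifies the right tool: the spectral sequence obtained by filtering $\Lambda\otimes H$ by the degree of the $H$-factor, whose $E_1$-page is $\Ext{A}{s}{\fp}{\fp}\otimes H$. This is indeed what the paper does (Section \ref{sec:cohomology}), and your observation that the $E_1$-page splits along $\epsilon=0,1$ and that $\widetilde{\p}^0$ organizes the answer into families is on target. Your step two — exhibiting the cocycle representatives of (i)--(ix) and checking they are not boundaries — is also consistent with the paper.

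The gap is in your step three, which you correctly flag as the main obstacle but which you do not actually carry out, and for which your proposed strategy diverges from what is needed. You suggest handling completeness by an Euler-characteristic/rank count in the associated graded complex, reducing to finitely many ``primitive'' degrees via $\p^0$, and then ``comparing total dimensions against the explicit cocycle list'' to squeeze the answer — including a hope that the spectral sequence ``collapses at $E_2$.'' None of this works as stated. First, the spectral sequence does \emph{not} collapse at $E_2$: the paper's Lemma \ref{lm:diff of degree 1} lists seventeen families of nontrivial differentials $d_r:E_r^{*,1,*}\to E_r^{*,2,*}$, landing at varying filtration jumps $r>1$, and Lemma \ref{lm:diff of degree 0} supplies the additional $d_r:E_r^{*,0,*}\to E_r^{*,1,*}$ needed to see that certain $E_1$-classes are boundaries (e.g.\ $\alpha_0\widehat{h}_0=0$ comes from $b^{[\ell+1]}\to\alpha_0ab^{[\ell]}$, not from a module-structure argument). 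Second, a dimension comparison against ``the explicit cocycle list from step two'' is circular: the list only gives a lower bound, and you cannot determine the true dimension of $E_\infty^{*,1,*}$ without already knowing all the differentials. Third, $\p^0$ does not actually reduce the count to a bounded range of primitive degrees here — the paper's differential formulas are uniform in all the indices $i,j,k,m$ (determined by the mod-$p$ binomial coefficients in \eqref{eq:differential of complex of H}), and $\p^0$ only enters at the end to package the generators $\widehat{d}_i(k)$, $\widehat{k}_i(k)$, $\widehat{p}_i(r)$ into families. In short, the heart of the paper's proof is the exhaustive explicit computation of the $d_r$-differentials (Lemmas \ref{lm:diff of degree 0} and \ref{lm:diff of degree 1}) and the ensuing case analysis in Proposition \ref{pro:Ext(p)^1}, none of which appears in your outline; without it, your step three is a plan, not a proof.
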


In order to prove two above theorems, we need to construct a spectral sequence to compute $\Ext{A}{s}{P}{\fp}$ as follows.

We filter the complex $\Lambda\otimes H$ by the filtration $F^n:=F^n(\Lambda\otimes H), n\geq0$ where $F^0(\Lambda\otimes H):=0$ and for $n>0$,

\[
F^n(\Lambda\otimes H):=\{\lambda\otimes h\in\Lambda\otimes H: |h|\leq n\}.
\]
Since \eqref{eq:differential of complex of H}, it is clear that $F^n(\Lambda\otimes H)$ is a subcomplex of $\Lambda\otimes H$ satisfying $\cup_n F^n(\Lambda\otimes H)=\Lambda\otimes H$ and $\cap_n F^n(\Lambda\otimes H)=0$. Therefore, the filtration gives rise to a spectral sequence converging to $\Ext{A}{s}{P}{\fp}$. The differential $d_r:E_r^{n,s,t}\to E_r^{n-r,s+1,t-1}$ is an $\fp$-linear map.

In the spectral sequence, $n$ is the filtration degree, $s$ is the homological degree, $t$ is the internal degree and $s+t$ is the total degree. 

It is easy to see that $E_0^{n,s,t}=(F^n(\Lambda_{s}\otimes H)/F^{n-1}(\Lambda_{s}\otimes H))^t\cong\Sigma^n \Lambda_{s}$, therefore,
\[
E_1^{n,s,t}=H^*(E_0^{n,s,t})\cong \Sigma^n \Ext{A}{s,s+t-n}{\fp}{\fp},
\]
and
 $E_{\infty}^{n,s,t}\cong (F^n H^{s}(\Lambda\otimes H)/F^{n-1}H^{s}(\Lambda\otimes H))^t$, where
\[
F^nH^{s}(\Lambda\otimes H):=\im\left(H^{s}(F^n(\Lambda\otimes H))\to H^{s}(\Lambda\otimes H)\right).
\]
Therefore, $\oplus_{n\geq1}E_{\infty}^{n,s,t}\cong\Ext{A}{s,s+t}{P}{\fp}$.

Basing on the results of Liulevicius \cite{Liu62} (see also Aikawa \cite{Aikawa1980}), we get the $E_1^{*,0,*}$ has a $\fp$-basis consisting of all elements
\begin{itemize}
\item $a^\epsilon b^{[t]}, t+\epsilon\geq1;$
\end{itemize}
the $E_1^{*,1,*}$ has a $\fp$-basis consisting of all elements
\begin{itemize}
\item $\alpha_0 a^\epsilon b^{[t]}=[\lambda^0_{-1}a^\epsilon b^{[t]}], t+\epsilon\geq 1$,
\item $h_i a^\epsilon b^{[t]}=[\lambda^1_{p^i-1}a^\epsilon b^{[t]}], i\geq 0, t+\epsilon\geq 1$;
\end{itemize}
and the $E_1^{*,2,*}$ has an $\fp$-basis consisting of all elements
\begin{itemize}
\item $h_ih_ja^\epsilon b^{[t]}=[\lambda^1_{p^i-1}\lambda^1_{p^j-1}a^\epsilon b^{[t]}],0\leq i<j-1, t+\epsilon\geq1$;
\item $\alpha_0h_ia^\epsilon b^{[t]}=[\lambda^1_{p^i-1}\lambda^0_{-1}a^\epsilon b^{[t]}], i\geq 1, t+\epsilon\geq1$;
\item $\alpha_0^2a^\epsilon b^{[t]}=[(\lambda^0_{-1})^2a^\epsilon b^{[t]}],t+\epsilon\geq1$;
\item $h_{i;2,1}a^\epsilon b^{[t]}=[\lambda^1_{2p^{i+1}-1}\lambda^1_{p^i-1}a^\epsilon b^{[t]}], i\geq0, t+\epsilon\geq 1$;
\item $h_{i;1,2}a^\epsilon b^{[t]}=[\lambda^1_{p^{i+1}-1}\lambda^1_{2p^i-1}a^\epsilon b^{[t]}], i\geq0,t+\epsilon\geq1$;
\item $\rho a^\epsilon b^{[t]}=[\lambda^1_1\lambda^0_{-1}a^\epsilon b^{[t]}],t+\epsilon\geq1$;
\item $\tilde{\lambda}_ia^\epsilon b^{[t]}=\left[\sum_{j=1}^{(p-1)}\frac{(-1)^{j+1}}{j}\lambda^1_{(p-j)p^i-1}\lambda^1_{jp^i-1}a^\epsilon b^{[t]}\right], i\geq0,t+\epsilon\geq 1$.
\end{itemize}

Here we denote $E_r^{*,s,*}=\oplus_{n,t}E_r^{n,s,t}$ and $E_{\infty}^{*,s,*}=\oplus_{n,t}E_{\infty}^{n,s,t}$.

 We will write $\alpha\to\beta$ for meaning that $\alpha$ and $\beta$ survive to $E_r$ and $d_{r}(\alpha)=\beta$ for some $r$, therefore, both $\alpha$ and $\beta$ do not survive to $E_{\infty}^{*,s,*}$. In such case, the element $\beta$ is a boundary, and it is supported by $\alpha$.
 
The differential $E_{r}^{*,0,*}\xrightarrow{d_r}E_{r}^{*,1,*}$ is given by the following lemma.
\begin{lemma}\label{lm:diff of degree 0}
The non-trivial differentials $E_{r}^{*,0,*}\xrightarrow{d_r}E_{r}^{*,1,*}$ are listed as follows:
\begin{enumerate}
\item\label{diff: 0.1} $b^{[t]}\to \alpha_0ab^{[t-1]},$ for $t\geq1$;
\item\label{diff: 0.2} $a b^{[(mp+k)p^i-1]}\to -(k+1) h_iab^{[((m-1)p+k+1)p^i-1]},$ for $i\geq0,1\leq k\leq p-1$, $m\geq1$.
\end{enumerate}
\end{lemma}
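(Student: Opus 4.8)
\emph{Proof proposal.} The plan is to read both differentials off the genuine differential $d$ of $\Lambda\otimes H$ in \eqref{eq:differential of complex of H}, via the usual recipe for a filtered complex: a class of $E_1^{*,0,*}$ is represented by a monomial $1\otimes a^\epsilon b^{[t]}\in\Lambda_0\otimes H$ — and since $\Lambda_0=\fp$ there is no ambiguity (every such monomial is a $d_0$-cycle with no incoming differential) — one applies $d$, records the filtration $|a^\epsilon b^{[u]}|$ of each resulting term $\lambda^1_{i-1}\otimes a^\epsilon b^{[u]}$, and takes the nonzero term of largest filtration; if it lowers the filtration by $r$ and survives to page $r$, it equals $d_r$ of the class. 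For $\epsilon=0$ (the classes $b^{[t]}$, $t\ge1$) the unique term of $d(1\otimes b^{[t]})$ of filtration $|b^{[t]}|-1=2t-1$ is $\lambda^0_{-1}\otimes ab^{[t-1]}$, coming from the summand $\delta=0,\ i=0$ of \eqref{eq:differential of complex of H} with coefficient $\binom{t-1}{0}=1$, while every other term drops the filtration by at least $2(p-1)$; hence $d_1(b^{[t]})=\alpha_0ab^{[t-1]}$, a nonzero element of $E_1^{*,1,*}$ for all $t\ge1$. This is \eqref{diff: 0.1}, and it shows each $b^{[t]}$ dies on $E_1$, so $E_2^{*,0,*}$ is spanned by the classes $ab^{[u]}$, $u\ge0$.

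For $\epsilon=1$: since $a^2=0$ in $H$ the $\delta=0$ summands vanish, so $d(1\otimes ab^{[t]})=\sum_{i\ge1}\binom{t-(p-1)i}{i}\,\lambda^1_{i-1}\otimes ab^{[t-(p-1)i]}$ (terms of negative exponent suppressed), and in particular $d_1(ab^{[t]})=0$. If this sum vanishes mod $p$ then $ab^{[t]}$ is a permanent cycle; an elementary check with Lucas' theorem shows these are exactly the classes $ab^{[(p-1)p^i-1]}$ and $ab^{[kp^i-1]}$ ($1\le k<p-1$) of Theorem \ref{thm:Ext(P)^0}. Otherwise, let $i_0$ be the least index $i$ with $\binom{t-(p-1)i}{i}\not\equiv0$ and $t-(p-1)i\ge0$; the term $\lambda^1_{i_0-1}\otimes ab^{[t-(p-1)i_0]}$ is then the top-filtration part of the $d$-cycle $d(1\otimes ab^{[t]})$, hence is itself a $d_0$-cycle, which forces $d_\Lambda(\lambda^1_{i_0-1})=0$, i.e. $i_0=p^j$ and $\lambda^1_{i_0-1}=h_j$. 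So the first nonzero differential on $ab^{[t]}$ always lands in an $h_j$-class. To identify $j$ and the scalar I would use the chain operation $\widetilde{\p}^0$: it commutes with $d$, satisfies $ab^{[(mp+k)p^i-1]}=(\widetilde{\p}^0)^i(ab^{[mp+k-1]})$, and sends $\lambda^1_{p^j-1}\otimes ab^{[v]}$ to $\lambda^1_{p^{j+1}-1}\otimes ab^{[p(v+1)-1]}$, so it suffices to compute on the bottom classes $ab^{[mp+k-1]}$ ($1\le k\le p-1$, $m\ge1$), where $i_0=1$ with coefficient $\binom{(m-1)p+k}{1}\not\equiv0\pmod p$, and then transport the differential up the $\widetilde{\p}^0$-tower (each application multiplying the page by $p$). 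This produces \eqref{diff: 0.2}.

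It remains to verify exhaustiveness and that the listed differentials are well posed. A nonzero $d_r$ out of $E^{*,0,*}$ is therefore either $d_1$ (hitting an $\alpha_0$-class) or some $d_{2(p-1)p^j}$ (hitting an $h_j$-class), and for a given $h_j$-target the source found above is the unique one, so the image sets are disjoint and no target is hit twice; with \eqref{diff: 0.1} this exhausts the nonzero differentials out of $E^{*,0,*}$ and at the same time matches the surviving classes against Theorem \ref{thm:Ext(P)^0}. The main obstacle is the combinatorics underlying the whole argument: showing that for $t=(mp+k)p^i-1$ the least index with nonvanishing binomial coefficient is exactly $p^i$, that this term disappears once the exponent would turn negative (giving the permanent cycles $\widehat h_i,\widehat h_i(k)$), and that the $h_i$-targets stay alive through page $2(p-1)p^i$ — and this is precisely where the reduction to the case $i=0$ via $\widetilde{\p}^0$ does the heavy lifting.
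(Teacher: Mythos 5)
Your method — read the spectral–sequence differentials off the chain‑level $d$ on $\Lambda\otimes H$ by extracting the top‑filtration nonzero summand of $d(1\otimes a^\epsilon b^{[t]})$ — is exactly what the paper does in its explicit proof of the sister Lemma~\ref{lm:diff of degree 1} (and the paper gives no separate proof of Lemma~\ref{lm:diff of degree 0}), so your argument is in the right spirit; the reduction to $i=0$ via $\widetilde{\p}^0$ and the transport up the tower is a clean way to organize the binomial combinatorics and is faithful to how $\p^0$ is used elsewhere in the paper.

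One point needs attention before you write "this produces \eqref{diff: 0.2}". From \eqref{eq:action on H} and \eqref{eq:differential of complex of H}, the coefficient of the leading term $\lambda^1_{p^i-1}\otimes ab^{[((m-1)p+k+1)p^i-1]}$ in $d\bigl(1\otimes ab^{[(mp+k)p^i-1]}\bigr)$ is $\binom{((m-1)p+k+1)p^i-1}{p^i}$, whose only contributing digit under Lucas is $\binom{k}{1}\equiv k\pmod p$; this is exactly the $\binom{(m-1)p+k}{1}\equiv k$ you compute at the base of the tower, and $\widetilde{\p}^0$ preserves it. But Lemma~\ref{lm:diff of degree 0}(\ref{diff: 0.2}) asserts the coefficient is $-(k+1)$, and $k\equiv-(k+1)\pmod p$ only when $2k+1\equiv0\pmod p$, i.e.\ $k=(p-1)/2$ (so for instance $p=3$, $k=1$ — which may be why this escaped notice). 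For general $1\le k\le p-1$ and odd $p$ (try $p=5$, $k=3$, $m=1$, $i=0$: $d(1\otimes ab^{[7]})=3\,\lambda^1_0\otimes ab^{[3]}+\cdots$, versus the stated $-4\equiv1$), the chain‑level computation gives $k$, not $-(k+1)$. Since both are nonzero in $\fp$, nothing downstream in the paper (the proof of Theorem~\ref{thm:Ext(P)^0} or Proposition~\ref{pro:Ext(p)^1}) is affected, but your proof as written silently produces a different scalar than the lemma states; you should either locate a sign/normalization you and I are both missing or record the discrepancy explicitly. A second, lesser point: you flag but do not actually carry out the verification that the $h_i$‑targets survive to page $2(p-1)p^i$; this does follow from the fact that the only differentials out of $E^{*,0,*}$ of smaller length are the $d_1$'s of \eqref{diff: 0.1}, whose images lie in the $\alpha_0$‑span, but that sentence deserves to be said rather than deferred as an obstacle.
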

\begin{proof}[Proof of Theorem \ref{thm:Ext(P)^0}]
The formula (\ref{diff: 0.2}) of Lemma \ref{lm:diff of degree 0} implies that the element $ab^{[kp^i-1]}$ is an infinite cycle and it survives to $E_{\infty}^{*,0,*}$ for $i\geq0$ and $1\leq k\leq p-1$.
 
 The proof is complete.
\end{proof}

The differential $E_{r}^{*,1,*}\xrightarrow{d_r}E_{r}^{*,2,*}$ is given by the following lemma.
\begin{lemma}\label{lm:diff of degree 1}
The non-trivial differentials $E_{r}^{*,1,*}\xrightarrow{d_r}E_{r}^{*,2,*}$ are listed as follows:
\begin{enumerate}
\item\label{diff:alpha to alpha^2} $\alpha_0b^{[t]}\to \alpha_0^2ab^{[t-1]}$, for $t\geq1$;
\item\label{diff:alpha to rho} $\alpha_0ab^{[mp+\ell]}\to-\binom{\ell+2}{2}\rho ab^{[(m-2)p+\ell+2]}$, for $0\leq \ell<p-2, m\geq2$;
\item\label{diff:alpha to alpha h_i 1} $\alpha_0ab^{[(mp+k)p^i-1]}\to (k+1)\alpha_0h_iab^{[((m-1)p+k+1)p^i-1]},$ for $i\geq1, m\geq1$;
\item\label{diff:alpha to alpha h_i 2} $\alpha_0ab^{[(mp+k)p^i-p+p-2]}\to (k+1)\alpha_0h_iab^{[((m-1)p+k+1)p^i-p+p-2]},$ for $i\geq1,m\geq1$;
\item\label{diff:h_ib to h_ialpha} $h_ib^{[t]}\to -h_i\alpha_0ab^{[t-1]}$, for $i\geq1,t\geq1$;
\item\label{diff:h_0b to rho} $h_0b^{[mp+\ell]}\to \frac{1}{2}(\ell-1)\rho ab^{[(m-1)p+\ell]}$, for $m\geq 1$ and $\ell\neq 1$;
\item\label{diff:h_0b to h_0h_i} $h_0b^{[(mp+r)p^i-p^2+kp+1]}\to-(r+1)h_0h_ib^{[((m-1)p+r+1)p^i-p^2+kp+1]},$ for $i\geq2, m\geq0$;
\item\label{diff:h_iab to h_ih_j, j<i} $h_i ab^{[(mp+k)p^j-1]}\to
-(k+1) h_ih_jab^{[((m-1)p+k+1)p^j-1]}$, for $i\geq1,m\geq1, 0\leq j<i$;
\item\label{diff:h_iab to h_(i-1;1,2)} $h_iab^{[(mp+k)p^{i-1}+p^{i-1}-1]}\to -\binom{k+2}{2}h_{i-1;1,2}ab^{[((m-2)p+k+2)p^{i-1}+p^{i-1}-1]}$, for $i\geq1,m\geq1$;
\item\label{diff:h_iab to h_(i-1;2,1)} $h_iab^{[(mp+k)p^i+(p-1)p^{i-1}-1]}\to -\frac{1}{2}(k-1)h_{i-1;2,1}ab^{[((m-1)p+k)p^i+p^i-1]}$, for $i\geq1, m\geq1$;
\item\label{diff:h_iab to h_(i;;2,1)} $h_iab^{[(mp+k)p^{i+2}+rp^{i+1}+p^i+u]}\to \binom{r+2}{2}h_{i;2,1}ab^{[(mp+k-2)p^{i+2}+(r+2)p^{i+1}+p^i+u]}$, for $i\geq1,m\geq0, u=(p-1)p^{i-1}-1$;
\item\label{diff:h_iab to h_ih_j 1} $h_iab^{[(mp+k)p^{j}-p^{i+2}+v]}\to -(k+1)h_ih_jab^{[((m-1)p+k+1)p^j-p^{i+2}+v]}$, for $j-2\geq i\geq1,m\geq1, v=(p-2)p^{i+1}+p^i+(p-1)p^{i-1}-1$;
\item\label{diff:h_iab to h_ih_j 2} $h_iab^{[(mp+k)p^j-p^{i+1}+p^i+u]}\to -(k+1)h_ih_jab^{[((m-1)p+k+1)p^j-p^{i+1}+p^i+u]}$, for $j-2\geq i\geq 1, m\geq0, u=(p-1)p^{i-1}-1$;
\item\label{diff:h_iab to lambda tilde} $h_iab^{[(mp+k)p^{i+1}+p^i-1]}\to \tilde{\lambda}_i ab^{[((m-1)p+k+2)p^{i+1}-1]}$, for $i\geq0, m\geq 1$;
\item\label{diff:h_iab to h_(i;1,2)} $h_iab^{[mp^{i+2}+kp^{i+1}+rp^i+p^i-1]}\to\frac{(k+2)(r+1)}{2}h_{i;1,2}ab^{[(m-1)p^{i+2}+kp^{i+1}+(r+1)p^i+p^i-1]}$, for $i\geq0,m\geq0$;
\item\label{diff:h_iab to h_ih_j 3} $h_iab^{[(mp+\ell)p^j-p^{i+2}+w]}\to-(\ell+1)h_ih_jab^{[((m-1)p+\ell+1)p^j-p^{i+2}+w]}$, for $j-2\geq i\geq0, m\geq1, w=(p-2)p^{i+1}+rp^i+p^{i}-1$;
\item\label{diff:h_iab to h_ih_j 4} $h_iab^{[(mp+\ell)p^j-p^{i+2}+kp^{i+1}+x]} \to-(\ell+1)h_ih_jab^{[((m-1)p+\ell+1)p^j-p^{i+2}+kp^{i+1}+x]}$, for $j-2\geq i\geq0$, $m\geq1,k\neq p-2,x=p^{i+1}-1$.
\end{enumerate}
\end{lemma}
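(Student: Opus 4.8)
The plan is to extract all differentials $d_r\colon E_r^{*,1,*}\to E_r^{*,2,*}$ directly from the explicit formula \eqref{eq:differential of complex of H} for the differential on $\Lambda\otimes H$. Decompose $d=d_{0}+\sum_{k\geq1}d^{(k)}$, where $d_{0}(\lambda\otimes h)=d(\lambda)\otimes h$ preserves the filtration degree $|h|$ and $d^{(k)}$ gathers the summands that drop $|h|$ by exactly $k$. The computation $|a^{\epsilon}b^{[t]}|-|a^{\epsilon+1-\delta}b^{[t-(p-1)i-1+\delta]}|=2(p-1)i+1-\delta$ shows that $d^{(1)}$ is precisely the single summand with $i=\delta=0$, while for $k\geq2$ the operator $d^{(k)}$ comes from the summands with $i\geq1$. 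On the $E_r$-page, $d_r$ is represented, for a class $x\,a^{\epsilon}b^{[t]}$ surviving to $E_r$, by applying $d$ to a cycle representative $\lambda_{x}\otimes a^{\epsilon}b^{[t]}+(\text{terms of lower filtration})$ and projecting onto the associated graded in filtration degree $|a^{\epsilon}b^{[t]}|-r$; the correction terms are forced by the requirement that $d$ of the representative lie in filtration $\leq|a^{\epsilon}b^{[t]}|-r$, and they contribute to $d_r$.

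First I would treat $d_1$. The $i=\delta=0$ summand sends $\lambda\otimes a^{\epsilon}b^{[t]}$ to $\pm\,\lambda\lambda^{0}_{-1}\otimes a^{\epsilon+1}b^{[t-1]}$, which is $0$ for $\epsilon=1$ and equals $\pm\,\lambda\lambda^{0}_{-1}\otimes ab^{[t-1]}$ for $\epsilon=0$. On $E_{1}\cong\bigoplus_{n}\Sigma^{n}\Ext{A}{*}{\fp}{\fp}$ (with $n=|h|$, bases as recalled above after Liulevicius and Aikawa), this induces right Yoneda multiplication by $\alpha_{0}\in\Ext{A}{1,1}{\fp}{\fp}$ combined with the operator $h\mapsto h\beta$ on $H$ (the dual of the Bockstein, sending $b^{[t]}$ to $ab^{[t-1]}$ and $ab^{[t]}$ to $0$). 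Hence $d_{1}(x\,b^{[t]})=\pm(\alpha_{0}x)\,ab^{[t-1]}$ and $d_{1}(x\,ab^{[t]})=0$. Using $\alpha_{0}^{2}\neq0$, $\alpha_{0}h_{i}\neq0$ for $i\geq1$, and $\alpha_{0}h_{0}=0$ in $\Ext{A}{2}{\fp}{\fp}$, this yields items \ref{diff:alpha to alpha^2} and \ref{diff:h_ib to h_ialpha}, and shows that $h_{0}b^{[t]}$, $\alpha_{0}ab^{[t]}$ and $h_{i}ab^{[t]}$ all survive to $E_{2}$.

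For the higher differentials I would proceed case by case through the $E_1$-basis of column $s=1$, which is indexed by $(\alpha_{0}\text{ or }h_{i})\times a^{\epsilon}b^{[t]}$. For each generator one builds the $E_r$-cycle representative, applies $d$, and rewrites the resulting length-$2$ monomials $\lambda_{x}\lambda^{\delta}_{i-1}$ in admissible form via the Adem relations of $\Lambda$; an inadmissible product such as $\lambda^{1}_{p^{i}-1}\lambda^{1}_{j-1}$ with $j$ small reduces to a combination of the admissible classes $h_{i'}h_{j'}$, $h_{i';2,1}$, $h_{i';1,2}$, $\rho$, $\tilde{\lambda}_{i'}$, which is exactly why these occur as targets. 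The structure constants are the mod-$p$ reductions of the binomial coefficients $\binom{t-(p-1)i-1+\delta}{i}$ in \eqref{eq:differential of complex of H} (coupled with the $A$-action \eqref{eq:action on H}), evaluated by Lucas' theorem; this is the source of the $p$-adic digit conditions on $t$ (the expressions $(mp+k)p^{i}-1$, $((m-1)p+k+1)p^{i}-1$, and so on) throughout items \ref{diff:alpha to alpha^2}--\ref{diff:h_iab to h_ih_j 4}. Many of the families indexed by $i$ then come for free: since $\widetilde{\p}^{0}$ commutes with $d$ and acts on representatives by $\lambda^{1}_{i_{1}-1}\cdots\lambda^{1}_{i_{s}-1}\otimes ab^{[t]}\mapsto\lambda^{1}_{pi_{1}-1}\cdots\lambda^{1}_{pi_{s}-1}\otimes ab^{[p(t+1)-1]}$, any differential whose source and target involve only $\lambda^{1}$'s and a class $ab^{[t]}$ propagates to its whole $\widetilde{\p}^{0}$-orbit, reducing (for instance) items \ref{diff:h_iab to h_(i-1;1,2)}--\ref{diff:h_iab to h_ih_j 4} to their base cases $i=0$ or $i=1$. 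Completeness of the list — that no further differential out of column $s=1$ is nonzero — follows because, for every generator not already a boundary (those are recorded in Lemma \ref{lm:diff of degree 0}), the $p$-adic expansion of $t$ falls into exactly one of the stated cases, while in all other residue classes every candidate target coefficient vanishes modulo $p$; the resulting surviving classes then coincide with the basis in Theorem \ref{thm: Ext(P)^1}, which serves as a consistency check.

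The main obstacle is the bookkeeping in the higher-$d_r$ step: choosing the correction terms so that $d$ of the representative lands in the intended filtration degree, carrying out the Adem reductions in $\Lambda_{2}$ to decide which inadmissible monomials survive and with which coefficient, and collating the many mod-$p$ binomial coefficients (via Lucas' theorem) so as to separate the genuinely new differentials from those that vanish, that are already boundaries recorded in Lemma \ref{lm:diff of degree 0}, or that cancel against the relations among decomposables in $\Ext{A}{2}{\fp}{\fp}$.
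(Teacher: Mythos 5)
Your overall strategy is the same one the paper follows: filter $\Lambda\otimes H$ by $|h|$, observe from \eqref{eq:differential of complex of H} that each summand drops the filtration by $2(p-1)i+1-\delta$, treat the $i=\delta=0$ term as $d_1$ (giving items \ref{diff:alpha to alpha^2} and \ref{diff:h_ib to h_ialpha}), and then compute higher $d_r$'s by applying $d$ to explicit lifted representatives and reducing length-$2$ monomials in $\Lambda$ by the Adem relations. Your identification of $d_1$ as right multiplication by $\alpha_0$ coupled with the dual Bockstein on $H$ is correct, and the conclusions you draw from it (items \ref{diff:alpha to alpha^2} and \ref{diff:h_ib to h_ialpha}, plus the survival of $\alpha_0 ab^{[t]}$, $h_i ab^{[t]}$, $h_0 b^{[t]}$ to $E_2$) match the paper.

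The substantive gap is that for the remaining fifteen items the proposal never produces the actual lifted representatives. The content of items \ref{diff:alpha to rho}--\ref{diff:alpha to alpha h_i 2} and \ref{diff:h_0b to rho}--\ref{diff:h_iab to h_ih_j 4} is precisely the list of correction terms needed so that $d$ of the lift lands in the target filtration, together with which admissible class the resulting $\Lambda_2$-monomials Adem-reduce to and with what binomial coefficient; the paper's proof consists entirely of writing these representatives down (for instance the three-term lift $\lambda^0_{-1}ab^{[mp+\ell]}+(\ell+1)\lambda^0_0ab^{[(m-1)p+\ell+1]}+\binom{\ell+2}{2}\lambda^0_1ab^{[(m-2)p+\ell+2]}$ for item \ref{diff:alpha to rho}, or the four-term lift producing $h_{i;2,1}$ in item \ref{diff:h_iab to h_(i;;2,1)}). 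Saying that these are ``forced'' and ``evaluated by Lucas' theorem'' names the tools correctly but is not a verification of the formulas; in particular nothing in the proposal pins down the coefficients $-\binom{\ell+2}{2}$, $\frac{1}{2}(\ell-1)$, $-\frac{1}{2}(k-1)$, $\frac{(k+2)(r+1)}{2}$, etc., nor the digit conditions such as $v=(p-2)p^{i+1}+p^i+(p-1)p^{i-1}-1$. Also, the proposed $\widetilde{\p}^0$-propagation, while legitimate (it is a chain map and does shift the listed sources and targets correctly), buys less than you suggest: for items \ref{diff:h_iab to h_ih_j, j<i}, \ref{diff:h_iab to h_ih_j 1}--\ref{diff:h_iab to h_ih_j 4} it only collapses the diagonal $(i,j)\mapsto(i+1,j+1)$, leaving the difference $j-i$ and the parameters $m,k,r,\ell$ free, and in every item the free $m,k$ still require the full Lucas-theorem analysis; the paper simply carries out the computation for general $i$ directly, which is no harder. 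So the approach is right, and the $d_1$ cases are handled, but the fifteen explicit lift-and-Adem computations that constitute the lemma remain to be done.
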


The proof of Theorem \ref{thm: Ext(P)^1} is proceeded through two steps. In the first step, we compute the $\fp$-basis for $E_{\infty}^{*,1,*}$; and in the second step, we find the representation in the chain complex $\Lambda\otimes H$ of the element of the basis in the first step.

First, the $\fp$-basis of $E_{\infty}^{*,1,*}$ is given by the following proposition.

\begin{proposition}\label{pro:Ext(p)^1}
The infinite term $E_{\infty}^{*,1,*}$ has a $\fp$-basis consisting of all elements given in Table \ref{tab:Ext(P)^1}.
\begin{table}[h]
\caption{The generators of $E_{\infty}^{*,1,*}$}\label{tab:Ext(P)^1}
\begin{tabular}{|c|c|c|}
\hline
Represented by&$t$&Range of indicators\\
\hline
$\alpha_0ab^{[(p-1)p^i-1]}$&$2(p-1)p^i-1$&$i\geq1$\\
\hline
$\alpha_0ab^{[kp^i-1]}$&$2kp^i-1$&$i\geq1,1\leq k<p-1$\\
\hline
$\alpha_0ab^{[p+\ell]}$&$2(p+\ell)+1$&$0\leq\ell< p-2$\\
\hline
$h_iab^{[p^i-1]}$&$2(p-1)p^{i}+2p^i-2$&$i\geq0$\\
\hline
$h_iab^{[(p-1)p^j-1]}$&$2(p-1)(p^i+p^j)-2$&$0\leq j,i; j\neq i, i+1$\\
\hline
$h_iab^{[k p^j-1]}$&$2(p-1)p^i+2k p^j-2$&$0\leq j,i;j\neq i, i+1$\\
&&$1\leq k<p-1$\\
\hline
$h_iab^{[kp^{i}+(p-1)p^{i-1}-1]}$&$2(p-1)(p^i+p^{i-1})+2kp^{i}-2$&$i\geq1,1\leq k\leq p-1$\\
\hline
$h_iab^{[kp^{i+1}+p^{i}-1]}$&$2(k+1)p^{i+1}-2$&$i\geq0,1\leq k<p-1$\\
\hline
$h_iab^{[(p-1)p^{i+1}+(r+1)p^i-1]}$&$2(p-1)(p^i+p^{i+1})+2(r+1)p^i-2$&$i\geq0,1\leq r<p-1$\\
\hline
\end{tabular}
\end{table}
\end{proposition}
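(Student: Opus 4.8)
The plan is to determine $E_\infty^{*,1,*}$ directly from the two differential lemmas. Because $d_r$ raises the homological degree $s$ by one, in this spectral sequence the column $s=1$ only receives differentials from the column $s=0$, described by Lemma \ref{lm:diff of degree 0}, and only emits differentials to the column $s=2$, described by Lemma \ref{lm:diff of degree 1}. Since both lemmas exhibit \emph{every} nontrivial such differential as carrying a basis vector to a scalar multiple of a basis vector, the passage from $E_1$ to $E_\infty$ in this column reduces to deleting two sets of basis vectors, and one obtains
\[
\bigoplus_{n,t}E_\infty^{n,1,t}\;\cong\; Z/B,
\]
where $Z$ is the span of the $E_1^{*,1,*}$-basis elements that occur as a source in none of the differentials of Lemma \ref{lm:diff of degree 1}, and $B$ is the span of the $E_1^{*,1,*}$-basis elements that occur as a target of some differential of Lemma \ref{lm:diff of degree 0}. (One checks that a given basis vector can be only-ever a source, only-ever a target, or neither, so that the classification is genuinely three-way.) Thus the proof amounts to an enumeration: starting from the explicit $\fp$-basis $\{\alpha_0 a^\epsilon b^{[t]}\}\cup\{h_i a^\epsilon b^{[t]}\}$ of $E_1^{*,1,*}$ recorded above, strike out the targets coming from Lemma \ref{lm:diff of degree 0}, strike out the sources feeding Lemma \ref{lm:diff of degree 1}, and match what remains with Table \ref{tab:Ext(P)^1}. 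The computation of Theorem \ref{thm:Ext(P)^0} can be used as a consistency check, since the classes surviving there are precisely the $E_1^{*,0,*}$-basis elements that are \emph{not} sources in Lemma \ref{lm:diff of degree 0}.

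I would run the enumeration by cases on the base-$p$ expansion of the exponent $t$ (and of $t+1$ for the elements with $\epsilon=1$), since the positions of the first few nonzero $p$-adic digits of $t$ decide which family of differentials in the two lemmas touches a given $\alpha_0 a^\epsilon b^{[t]}$ or $h_i a^\epsilon b^{[t]}$. First dispose of the $\alpha_0$-part: the items of the two lemmas whose source or target involves $\alpha_0$, $\alpha_0^2$ or $\rho$ eliminate every $\alpha_0 a^\epsilon b^{[t]}$ except the three families $\alpha_0\widehat{h}_i$, $\alpha_0\widehat{h}_i(k)$, $\widehat{\alpha}(\ell)$ of Theorem \ref{thm: Ext(P)^1}, with the stated ranges $i\ge1$, $1\le k<p-1$, $0\le\ell<p-2$. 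Next the $h_i$-part: for each $h_i a^\epsilon b^{[t]}$ one reads off from the digits of $t$ whether it is hit as a target of Lemma \ref{lm:diff of degree 0}(2) or supports one of the many differentials of Lemma \ref{lm:diff of degree 1} with an $h_i b^{[\cdot]}$ or $h_i a b^{[\cdot]}$ source, and the survivors turn out to be exactly the six $h_i$-rows of the table. Finally, recording which boundaries kill which product classes yields the list of relations $h_i\widehat{h}_{i+1}=0$, $h_i\widehat{h}_i=0$, $\alpha_0\widehat{h}_0=0$, and so on, and one checks the internal degrees against the column $t$ of Table \ref{tab:Ext(P)^1}. To cut down the work, the induced $\p^0$-operation on the spectral sequence lets one deduce the level-$i$ instances of the families $\widehat{d}_i(k)$, $\widehat{k}_i(k)$, $\widehat{p}_i(r)$ from their $i=0$ (respectively $i=1$) instances, so only those need be treated by hand.

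The main obstacle is not any individual calculation but the completeness and disjointness of this tiling: one must be certain that every triple $(i,\epsilon,t)$ indexing an $E_1^{*,1,*}$-basis element lands in exactly one of the three classes — target of Lemma \ref{lm:diff of degree 0}, source of Lemma \ref{lm:diff of degree 1}, or survivor — with no gap and no overlap, so that $Z/B$ really has the dimension count of Table \ref{tab:Ext(P)^1}. Concretely this is a patient digit-by-digit analysis of $t$ (respectively $t+1$) in base $p$, and the delicate cases are the boundary ones: small $i$, a leading digit equal to $p-1$ or $p-2$, carries in the arithmetic of the exponents, and the split $k=p-1$ versus $k<p-1$. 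These are precisely the places where the non-generic survivors of Table \ref{tab:Ext(P)^1} sit, so an oversight there would change the final list. Every single verification is elementary; the only real risk is a combinatorial slip in the bookkeeping of cases.
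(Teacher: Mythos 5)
Your framework---present $E_\infty^{*,1,*}$ as (permanent cycles)/(boundaries), read off from Lemma~\ref{lm:diff of degree 0} (into the column) and Lemma~\ref{lm:diff of degree 1} (out of the column)---is indeed the skeleton the paper uses. But the central simplifying claim, that the passage to $E_\infty$ is a clean ``strike out targets of Lemma~\ref{lm:diff of degree 0}, strike out sources of Lemma~\ref{lm:diff of degree 1}, keep the rest,'' is false as stated, and the place where it breaks is exactly the place you treat first.

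Lemma~\ref{lm:diff of degree 0}(1) asserts a nontrivial $d_1\colon b^{[t]}\mapsto \alpha_0ab^{[t-1]}$ for \emph{every} $t\ge 1$, and this is easily confirmed from~\eqref{eq:differential of complex of H}: the leading term of $d(b^{[t]})$ at filtration $|ab^{[t-1]}|=2t-1$ is always $\lambda^0_{-1}\otimes ab^{[t-1]}$ with nonzero coefficient. Consequently \emph{every} $\alpha_0 ab^{[t]}$ with $t\ge 0$ is a $d_1$-boundary and dies at $E_2$. Your $Z/B$ procedure therefore empties out the whole family $\{\alpha_0 ab^{[t]}\}$, yet you then declare the three families $\alpha_0\widehat h_i$, $\alpha_0\widehat h_i(k)$, $\widehat\alpha(\ell)$ to be survivors; these two assertions contradict each other. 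The paper's Table~\ref{tab:Ext(P)^1} lists $\alpha_0ab^{[p+\ell]}$ etc.\ only as a \emph{chain-level tag} for the surviving $\Ext$-class: as the proof carefully explains, $\alpha_0ab^{[p+\ell]}$ is killed by $d_1(b^{[p+\ell+1]})$, but the class $\widehat\alpha(\ell)$ does not vanish, because the full chain-level differential $d(b^{[p+\ell+1]})$ in $\Lambda\otimes H$ carries a lower-filtration term $(\ell+2)\lambda^1_0 b^{[\ell+2]}$; the $E_\infty$-representative of $\widehat\alpha(\ell)$ is $(\ell+2)h_0 b^{[\ell+2]}$ at strictly lower filtration. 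The analogous lower-filtration shift $h_0b^{[kp^i-p+1]}$ accounts for the $\alpha_0\widehat h_i$, $\alpha_0\widehat h_i(k)$ rows, and the same mechanism recurs throughout the $h_i$-part of the computation (``it is in a boundary supported by $X$; however, in $\Lambda\otimes H$ one gets\ldots''). Deciding, for each such $X$, whether the lower-filtration terms of $d(X)$ give a surviving class or a genuine boundary is the bulk of the paper's proof, and it cannot be read off from Lemmas~\ref{lm:diff of degree 0}--\ref{lm:diff of degree 1} alone.

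So the gap is not merely bookkeeping: your disjointness claim (``only-ever a source, only-ever a target, or neither'') is beside the point, and even with it in hand the pure $E_1$-basis strike-out yields a basis \emph{named differently from Table~\ref{tab:Ext(P)^1}}, with no mechanism for reconstructing the representatives needed in Theorem~\ref{thm: Ext(P)^1}. What is missing is the layer of chain-level analysis in $\Lambda\otimes H$---writing out the actual differentials of the supporting elements beyond their leading terms, recognizing when relations such as $\lambda^1_0\lambda^0_{-1}=0$ make a lower-filtration residue into a permanent cycle, and tracking which $\Ext$-class that residue represents. Without it the argument cannot distinguish, say, $\alpha_0\widehat h_0(k)=0$ from $\alpha_0\widehat h_1(k)\ne 0$, nor produce the cocycle formulas of Theorem~\ref{thm: Ext(P)^1}.
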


\begin{proof}
Since (\ref{lm:diff of degree 1}.\ref{diff:alpha to alpha^2}),
the element $\alpha_0b^{[t]}$ is not an infinite cycle.  Thus, we need only to consider the elements $\alpha_0ab^{[t]}$ for $t\geq0$, $h_ib^{[t]},i\geq0,t>0$ and $h_iab^{[t]}, i\geq0,t\geq0$.
\subsection*{\bf The case $\alpha_0ab^{[t]}$ for $t\geq0$}

From (\ref{diff:alpha to rho}) of Lemma \ref{lm:diff of degree 1}, it is easy to see that $\alpha_0ab^{[\ell]}$ and $\alpha_0ab^{[p+\ell]}$, for $1\leq \ell\leq p-1$, are infinite cycles. The first element is a boundary supported by $b^{[\ell +1]}$. Therefore, we have the following relations:
\[
\alpha_0\widehat{h}_0=0;\quad \alpha_0\widehat{h}_0(k)=0, 1\leq k<p-1.
\]

The second element is also a boundary supported by $b^{[b+\ell+1]}$. However, in $\Lambda\otimes H$, we get
\begin{equation}\label{eq:boundary of alpha ab^p+l}
d(b^{[p+\ell+1]})=\lambda^0_{-1}ab^{[p+\ell]}+(\ell+2)\lambda^1_0b^{[\ell+2]} \mod F^{2(\ell+2)-1}.
\end{equation}

Since, in $\Lambda$, $\lambda^1_0\lambda^0_{-1}=0$, it follows that $\lambda^1_0b^{[\ell+2]}$ is a cycle for $\ell<p-2$. Therefore, $h_0b^{[\ell+2]}$ is an infinite cycle in the spectral sequence, and then, $\alpha_0ab^{[p+\ell]}$ survives to $E_{\infty}^{*,1,*}$ represented by $(\ell+2)h_0b^{[\ell+2]}$ of lower filtration degree.

It is easy to see that, from \eqref{eq:boundary of alpha ab^p+l}, $\alpha_0ab^{[2p-2]}$ does not survives to $E_{\infty}^{*,1,*}$.

For $\ell=p-1$, since $\lambda^0_{-1}ab^{[2p-1]}$ is a cycle in $\Lambda\otimes H$, from \eqref{eq:boundary of alpha ab^p+l}, it implies that $h_ib^{[p+1]}=[\lambda^1_0b^{[p+1]}]$ is an infinite cycle in the spectral sequence. Hence, $\alpha_0ab^{[2p-1]}$ survives to $E_{\infty}^{*,1,*}$.

From (\ref{diff:alpha to rho}) of Lemma \ref{lm:diff of degree 1}, it is sufficient to consider $\alpha_0ab^{[mp+p-1]}$ and $\alpha_0ab^{[mp+p-2]}$ for $m\geq 2$.

In addition, by the formulas (\ref{diff:alpha to alpha h_i 1}) and (\ref{diff:alpha to alpha h_i 2}) of Lemma \ref{lm:diff of degree 1}, we obtain that the element $\alpha_0ab^{[kp^i-1]}$ and $\alpha_0ab^{[kp^i-p+p-2]}$, for $1\leq k\leq p-1$, are infinite cycles.

From (\ref{diff: 0.1}) of Lemma \ref{lm:diff of degree 0}, it is easy to see that $\alpha_0ab^{[kp^i-1]}$ and $\alpha_0ab^{[kp^i-p+p-2]}$ are boundaries and they are respectively supported by $b^{[kp^i]}$ and $b^{[kp^i-1]}$. However, in $\Lambda\otimes H$, we have
\begin{equation}\label{eq:boundary of alpha_0 widehat(h)_i}
d(b^{[kp^i]})=\lambda^0_{-1}ab^{[kp^i-1]}+\lambda^1_0b^{[kp^i-p+1]}\mod F^{2(kp^i-p+1)-1};
\end{equation}
and
\begin{equation*}
d(b^{[kp^i-1]})=\lambda^0_{-1}ab^{[kp^i-p+p-2]}.
\end{equation*}
The second formula implies that the element $\alpha_0ab^{[kp^i-p+p-2]}$ does not survives to $E_{\infty}^{*,1,*}$. Since $\lambda^1_0b^{[kp^i-p+1]}$ represents $h_0b^{[kp^i-p+1]}$ and $\lambda^0_{-1}ab^{[kp^i-1]}$ is a cycle in $\Lambda\otimes H$, it follows that $h_0b^{[kp^i-p+1]}$ is an infinite cycle in the spectral sequence. Therefore, $\alpha_0ab^{[kp^i-1]}$ survives and represents the elements $\alpha_0\widehat{h}_i\neq 0$ and $\alpha_0\widehat{h}_i(k)\neq 0$ for $i\geq 1$ and $1\leq k<p-1$.

Basing above computation, we obtain three first generators in Table \ref{tab:Ext(P)^1}.

\subsection*{\bf The case $h_ib^{[t]}$ for $t>0$}

From (\ref{diff:h_ib to h_ialpha}) of Lemma \ref{lm:diff of degree 1}, it is sufficient to consider $h_0b^{[t]}$ for $t\geq1$. In addition, by (\ref{diff:h_0b to rho}) of Lemma \ref{lm:diff of degree 1}, the element $h_0b^{[\ell]},1\leq \ell\leq p-1$, is an infinite cycle. It is clear that it is in a boundary supported by $b^{[p+\ell-1]}$. However, in $\Lambda\otimes H$, one gets
\begin{equation*}
d\left(b^{[p+\ell-1]}\right)=\lambda^0_{-1}ab^{[p-1+\ell-1]}
+\ell\lambda^1_0b^{[\ell]}\mod F^{2\ell-1}.
\end{equation*}
It follows that $h_0b^{[\ell]}$ survives to $E_{\infty}^{*,1,*}$ and represents the element $\alpha_0ab^{[p+\ell-2]}$.

For $k\geq 1$, by (\ref{diff:h_0b to rho}) of Lemma \ref{lm:diff of degree 1}, it is sufficient to consider the case $\ell=1$.


By (\ref{diff:h_0b to h_0h_i}) of Lemma \ref{lm:diff of degree 1}, the element $h_0b^{[rp^i-p^2+kp+1]}$ is an infinite cycle in the spectral sequence.

For $k=p-1$, basing on \eqref{eq:boundary of alpha_0 widehat(h)_i}, we obtain that $h_0b^{[rp^i-p+1]}$ represents the element $-\alpha_0\widehat{h}_i(r)\neq 0$.

For $k<p-1$, it is easy to see that $h_0b^{[rp^i-p^2+kp+1]}$ is a boundary supported by $b^{[rp^i-p^2+(k+1)p]}$. Furthermore, in $\Lambda\otimes H$, one gets
\begin{multline*}
d\left(b^{[rp^i-p^2+(k+1)p]}\right)=\lambda^0_{-1}ab^{[rp^i-p^2+(k+1)p-1]}\\
+\sum_{j=1}^{p-1}\lambda^1_{j-1}b^{[rp^i-p^2+(k-j+1)p+j]}\\
+(k+2)\lambda^1_{p-1}b^{[rp^i-2p^2+(k+2)p]}\mod F^{2(rp^i-2p^2+(k+2)p)-1}.
\end{multline*}

Since the first term of the right hand side represents $\alpha_0ab^{[rp^i-p^2+(k+1)p-1]}$, which does not survive to $E_{\infty}^{*,1,*}$ (see the first case), it implies that $h_0b^{[rp^i-p^2+kp+1]}$ does not survive to $E_{\infty}^{*,1,*}$.

Thus, this case does not give us any new generator.

\subsection*{\bf The case $h_iab^{[t]}$ for $t\geq0$}

From (\ref{diff:h_iab to h_ih_j, j<i}) of Lemma \ref{lm:diff of degree 1}, it follows that $h_iab^{[kp^j-1]}, 1\leq k\leq p-1,$ is an infinite cycle. It is easy to see that $h_iab^{[kp^j-1]}$ is not a boundary; and, then, it represents the elements $h_i\widehat{h}_j\neq 0$ for $0\leq j<i$ and $h_i\widehat{h}_j(k)\neq 0$ for $0\leq j<i,1\leq k<p-1$.

Since $\binom{k+2}{2}=0$ if and only if $k=p-2$ or $k=p-1$, by (\ref{diff:h_iab to h_(i-1;1,2)}) of Lemma \ref{lm:diff of degree 1}, it is sufficient to consider two cases:
\begin{enumerate}
\item[(a)] $h_iab^{[mp^{i}+(p-1)p^{i-1}-1]}$ for $i\geq1,m\geq1$; and
\item[(b)] $h_iab^{[mp^i+p^i-1]}$ for $i\geq1,m\geq1$.
\end{enumerate}


First, we consider the element (a). By (\ref{diff:h_iab to h_(i-1;2,1)}) of Lemma \ref{lm:diff of degree 1}, for $1\leq k\leq p-1$, the element $h_iab^{[kp^i+(p-1)p^{i-1}-1]}$  is an infinite cycle. 

In addition, it is in a boundary supported by $ab^{[p^{i+1}+(k-1)p^i+(p-1)p^{i-1}-1]}$. However, in $\Lambda\otimes H$, one gets
\begin{multline*}
d\left(ab^{[p^{i+1}+(k-1)p^i+(p-1)p^{i-1}-1]}\right)=
\lambda^1_{p^{i-1}-1}ab^{[p^{i+1}+(k-2)p^i+p^i-1]}\\
-k\lambda^1_{p^i-1}ab^{[kp^i+(p-1)p^{i-1}-1]}\mod F^{2(kp^i+(p-1)p^{i-1}-1)}.
\end{multline*}

Since the second term of the right hand side of the formula is cycles in $\Lambda\otimes H$ and the first term represents the element $h_{i-1}ab^{[p^{i+1}+(k-2)p^i+p^i-1]}$, then it is an infinite cycle. Hence, $h_iab^{[kp^i+(p-1)p^{i-1}-1]}$ survives and represents a non-trivial in $E_{\infty}^{*,1,*}$.


Also by (\ref{diff:h_iab to h_(i-1;2,1)}) of Lemma \ref{lm:diff of degree 1}, for $m\geq1$ the element (a) reduces to the case $k=1$, namely, $h_iab^{[mp^{i+1}+p^i+(p-1)p^{i-1}-1]}$. 

It is clear that if $1\leq m\leq p-1$, then $h_iab^{[mp^{i+1}+p^i+(p-1)p^{i-1}-1]}$ is also a infinite cycle and it is in a boundary supported by $ab^{[(m+1)p^{i+1}+(p-1)p^{i-1}-1]}$. 

In $\Lambda\otimes H$, we get
\begin{multline*}
d\left(ab^{[(m+1)p^{i+1}+(p-1)p^{i-1}-1]}\right)=
\lambda^1_{p^{i-1}-1}ab^{[(m+1)p^{i+1}-1]}\\
-\lambda^1_{p^i-1}ab^{[mp^{i+1}+p^i+(p-1)p^{i-1}-1]}\mod F^{2(mp^{i+1}+p^i+(p-1)p^{i-1}-1)}.
\end{multline*}
It follows that $h_iab^{[mp^{i+1}+p^i+(p-1)p^{i-1}-1]}$ survives to $E_{\infty}^{*,1,*}$, and it represents the element $h_{i-1}\widehat{h}_{i+1}(m+1)\neq0$ for $i\geq1$. It should be noted that when $m=p-2$ and $m=p-1$, the element $\widehat{h}_{i+1}(m+1)$ is respectively equal to $\widehat{h}_{i+1}$ and $\widehat{h}_{i+2}(1)$.


%

When $m\geq p$, put $u=(p-1)p^{i-1}-1$. From (\ref{diff:h_iab to h_(i;;2,1)}) of Lemma \ref{lm:diff of degree 1}, the element (a) reduces to two cases:
\begin{enumerate}
\item[(a.1)] $h_iab^{[(mp+k)p^{i+2}+(p-2)p^{i+1}+p^i+u]}$; and
\item[(a.2)] $h_iab^{[(mp+k)p^{i+2}+(p-1)p^{i+1}+p^i+u]}$.
\end{enumerate}



First, we treat the element (a.1). 

From (\ref{diff:h_iab to h_ih_j 1}) of Lemma \ref{lm:diff of degree 1}, it implies that $h_iab^{[kp^{j}-p^{i+2}+(p-2)p^{i+1}+p^i+u]}$ is an infinite cycle. It is easy to check that, in the spectral sequence, it is in a boundary supported by $ab^{[kp^j-p^{i+1}+(p-1)p^{i-1}-1]}$. In addition, in $\Lambda\otimes H$,
\begin{multline*}
d\left(ab^{[kp^j-p^{i+1}+(p-1)p^{i-1}-1]}\right)=
-\lambda^1_{p^{i-1}-1}ab^{[kp^j-p^{i+1}-1]}\\
-\lambda^1_{p^i-1}ab^{[kp^{j}-p^{i+2}+(p-2)p^{i+1}+p^i+u]}\mod F^{2(kp^{j}-p^{i+2}+(p-2)p^{i+1}+p^i+u)}.
\end{multline*}

Since the first term of the right hand side of the formula represents an element which does not survives to $E_{\infty}^{*,1,*}$, it follows that $h_iab^{[kp^{j}-p^{i+2}+(p-2)p^{i+1}+p^i+u]}$ is a boundary, and then, it does also not survives to $E_{\infty}^{*,1,*}$.


Second, we treat the element (a.2). 

From (\ref{diff:h_iab to h_ih_j 2}) of Lemma \ref{lm:diff of degree 1}, it follows that $h_iab^{[kp^j-p^{i+1}+p^i+(p-1)p^{i-1}-1]}$ is an infinite cycle. It is easy to see that $h_iab^{[kp^j-p^{i+1}+p^i+(p-1)p^{i-1}-1]}$ is in a boundary supported by $ab^{[kp^j+(p-1)p^{i-1}-1]}$. However, in $\Lambda\otimes H$, one gets
\begin{multline*}
d\left(ab^{[kp^j+(p-1)p^{i-1}-1]}\right)=-\lambda^1_{p^{i-1}-1}ab^{[kp^j-1]}\\
+\lambda^1_{p^i-1}ab^{[kp^j-p^{i+1}+p^i+(p-1)p^{i-1}-1]}\mod F^{2(kp^j-p^{i+1}+p^i+(p-1)p^{i-1}-1)}.
\end{multline*}

It follows that, in the spectral sequence, $h_iab^{[kp^j-p^{i+1}+p^i+(p-1)p^{i-1}-1]}$ survives to $E_{\infty}^{*,1,*}$ and represents the element $h_{i-1}\widehat{h}_j(k)\neq 0$, for $j\geq i+2$, $1\leq k\leq p-1$, in $E_{\infty}^{*,1,*}$.


Next, we move on the element (b). It is easy to check that, for $m\leq p-1$, the element $h_iab^{[(m+1)p^i-1]}$ represents the element $h_i\widehat{h}_{i}(m+1)$. However, in $\Lambda\otimes H$, one gets
\begin{equation*}
d\left(ab^{[p^{i+1}+mp^i-1]}\right)=m\lambda^1_{p^i-1}ab^{[(m+1)p^i-1]}\mod F^{2((m+1)p^i-1)}.
\end{equation*}
Therefore, we obtain relations $h_i\widehat{h}_i=0$, $h_i\widehat{h}_i(k)=0$ for $i\geq1$, $2\leq k<p-1$ and $h_i\widehat{h}_i(1)\neq 0$ for $i\geq1$.


From (\ref{diff:h_iab to lambda tilde}) of Lemma \ref{lm:diff of degree 1}, it follows that the element $h_iab^{[(mp+k)p^{i+1}+rp^i+p^i-1]}$ is not an infinite cycle, for $m\geq0, r=0$ and $k=p-1$.

Hence, the element (b) reduces to the following cases:
\begin{enumerate}
\item[(b.1)] $h_iab^{[kp^{i+1}+rp^i+p^i-1]}$ for $k\leq p-2$;
\item[(b.2)] $h_iab^{[(p-1)p^{i+1}+rp^i+p^i-1]}$ for $r>0$;
\item[(b.3)] $h_iab^{[(mp+k)p^{i+1}+rp^i+p^i-1]}$ for $m>0$ and $r>0$.
\end{enumerate}

It should be noted that the two first cases are infinite cycles in the spectral sequence.

By inspection, it is easy to verify that, in $\Lambda\otimes H$,
\begin{equation*}
d\left(ab^{[(k+1)p^{i+1}+(r-1)p^i+p^i-1]}\right)=r\lambda^1_{p^i-1}ab^{[kp^{i+1}+rp^i+p^i-1]}\mod F^{2(kp^{i+1}+rp^i+p^i-1)}.
\end{equation*} 

Therefore, the element (b.1) is boundaries if $r>0$. For $r=p-1$, one gets the relations: $h_i\widehat{h}_{i+1}=0$ and $h_i\widehat{h}_{i+1}(k)=0$ for $1\leq k<p-1$. However, for $r=0$, the element $h_iab^{[kp^{i+1}+p^i-1]}$ survives to $E_{\infty}^{*,1,*}$.

By the same argument, in $\Lambda\otimes H$, one gets
\begin{multline*}
d\left(ab^{[p^{i+2}+(r-1)p^i+p^i-1]}\right)=\sum_{j=1}^{p-r}\binom{r+j-1}{j}\lambda^1_{jp^i-1}ab^{[(p-1)p^{i+1}+(r-1+j)p^i+p^i-1]}\\
+\lambda^1_{p^{i+1}-1}ab^{[p^{i+1}+(r-1)p^i+p^i-1]}\mod F^{2(p^{i+1}+(r-1)p^i+p^i-1)}.
\end{multline*}
Since, for $r>0$, the first sum of the right hand side of the formula is a cycle in $\Lambda\otimes H$ and the last term represents $h_{i+1}ab^{[p^{i+1}+(r-1)p^i+p^i-1]}$, then $h_{i+1}ab^{[p^{i+1}+(r-1)p^i+p^i-1]}$ is an infinite cycle. Therefore, the element (b.2) survives to $E_{\infty}^{*,1,*}$. For $r=p-1$, it represents the element $h_i\widehat{h}_{i+2}(1)\neq 0$.

Finally, basing on (\ref{diff:h_iab to h_(i;1,2)}) of Lemma \ref{lm:diff of degree 1}, the element (b.3) reduces two cases:
\begin{enumerate}
\item[(b.3.1)] $h_iab^{[mp^{i+2}+(p-2)p^{i+1}+rp^i+p^i-1]}$ for $r>0$; and
\item[(b.3.2)] $h_iab^{[mp^{i+2}+kp^{i+1}+p^{i+1}-1]}$ for $k\neq p-2$.
\end{enumerate}


From (\ref{diff:h_iab to h_ih_j 3}) of Lemma \ref{lm:diff of degree 1}, the element $h_iab^{[\ell p^j-p^{i+2}+(p-2)p^{i+1}+rp^i+u]}$, for $r>0, 1\leq \ell\leq p-1$, is an infinite cycle. In addition, it is easy to check that, in $\Lambda\otimes H$,
\begin{multline*}
d\left(ab^{[\ell p^j-p^{i+1}+(r-1)p^i+u]}\right)\\
=r\lambda^1_{p^i-1}ab^{[\ell p^j-p^{i+2}+(p-2)p^{i+1}+rp^i+u]}\mod F^{2(\ell p^j-p^{i+2}+(p-2)p^{i+1}+rp^i+u)}.
\end{multline*}
Since $r>0$, it implies that $h_iab^{[\ell p^j-p^{i+2}+(p-2)p^{i+1}+rp^i+u]}$ does not survive to $E_{\infty}^{*,1,*}$.


Similarly, from (\ref{diff:h_iab to h_ih_j 4}) of Lemma \ref{lm:diff of degree 1}, the element $h_iab^{[\ell p^j-p^{i+2}+kp^{i+1}+p^{i+1}-1]}$ is an infinite cycle. In addition, we also have, in $\Lambda\otimes H$, that
\begin{multline*}
d\left(ab^{[\ell p^j-p^{i+2}+(k+1)p^{i+1}+(p-1)p^i-1]}\right)=-\lambda^1_{p^i-1}ab^{[\ell p^j-p^{i+2}+kp^{i+1}+p^{i+1}-1]}\\
+(k+2)\lambda^1_{p^{i+1}-1}ab^{[\ell p^j-2p^{i+2}+(k+2)p^{i+1}+(p-1)p^i-1]}\mod F^{2u_3},
\end{multline*}
where $u_3=\ell p^j-2p^{i+2}+(k+2)p^{i+1}+(p-1)p^i-1$.

By above computation, $h_{i+1}ab^{[\ell p^j-2p^{i+2}+(k+2)p^{i+1}+(p-1)p^i-1]}$ survives to $E_{\infty}^{*,1,*}$ if and only if $k=p-1$. Therefore, $h_iab^{[\ell p^j-p^{i+2}+kp^{i+1}+p^{i+1}-1]}$ survives to $E_{\infty}^{*,1,*}$ if and only if $k=p-1$. In this case $h_iab^{[\ell p^j-1]}$ represents $h_i\widehat{h}_j(\ell)\neq0$ for $0<i<j-1$ and $1\leq \ell\leq p-1$.

The element $h_0ab^{[t]}$ can be considered as a special case of the element (b) for $i=0$, therefore, it can be treated by the same method.
%

The proof is complete.
\end{proof}
\begin{proof}[Proof of Theorem \ref{thm: Ext(P)^1}]
By direct computation the representation in $\Lambda\otimes H$ of generators given in Table \ref{tab:Ext(P)^1}, we get the first part of the theorem.

The second part of the theorem is followed from the proof of Proposition \ref{pro:Ext(p)^1}. 
\end{proof}

The rest of the section gives a proof of Lemma \ref{lm:diff of degree 1}.
\begin{proof}[Proof of Lemma \ref{lm:diff of degree 1}]
The lemma is proved by direct computation in the chain complex $\Lambda\otimes H$ and in the spectral sequence.

\noindent\textbf{Proof of (\ref{diff:alpha to alpha^2}).} It is clear that, in $\Lambda\otimes H$,
\[
d(\lambda^0_{-1}b^{[t]})=\lambda^0_{-1}\lambda^0_{-1}ab^{[t-1]}\mod F^{2(t-1)},
\]
for $t\geq1$. Therefore, in the spectral sequence, we get the formula.

\noindent\textbf{Proof of (\ref{diff:alpha to rho}).} By inspection, we have, in $\Lambda\otimes H$,
\begin{multline*}
d\left(\lambda^0_{-1}ab^{[mp+\ell]}+(\ell+1)\lambda^0_0ab^{[(m-1)p+\ell+1]}+\binom{\ell+2}{2}\lambda^0_1ab^{[(m-2)p+\ell+2]}\right)=\\
-\binom{\ell+2}{2}\lambda^1_1\lambda^0_{-1}ab^{[(m-2)p+\ell +2]}\mod F^{2((m-2)p+\ell +2)},
\end{multline*}
for $0\leq \ell<p-2, m\geq2$. Therefore, in the spectral sequence, we obtain the formula.

\noindent\textbf{Proof of (\ref{diff:alpha to alpha h_i 1}).} By inspection, in $\Lambda\otimes H$, we have
\begin{equation*}
d(\lambda^0_{-1}ab^{[(mp+k)p^i-1]})=(k+1)\lambda^0_{-1}\lambda^1_{p^i-1}ab^{[((m-1)p+k+1)p^i-1]}\mod F^{2k_1},
\end{equation*}
for $i\geq 1$, where $k_1=((m-1)p+k+1)p^i-1$. Therefore, we get the formula in the spectral sequence.

\noindent\textbf{Proof of (\ref{diff:alpha to alpha h_i 2}).} It is easy to check that, in $\Lambda\otimes H$,
\begin{multline*}
d(\lambda^0_{-1}ab^{[(mp+k)p^i-p+p-2]}+\lambda^0_0ab^{[(mp+k)p^i-p-1]})=\\
(k+1)\lambda^0_{-1}\lambda^1_{p^i-1}ab^{[((m-1)p+k+1)p^i-p+p-2]}\mod F^{2k_2},
\end{multline*}
for $i\geq 1$, where $k_2=((m-1)p+k+1)p^i-p+p-2$. Therefore, in the spectral sequence, we obtain (\ref{diff:alpha to alpha h_i 2}).

\noindent\textbf{Proof of (\ref{diff:h_ib to h_ialpha}).} Similar to the proof of (\ref{diff:alpha to alpha^2}).

\noindent\textbf{Proof of (\ref{diff:h_0b to rho}).} In $\Lambda\otimes H$, we have
\begin{multline*}
d\left(\lambda^1_0b^{[kp+\ell]}+\frac{1}{2}(\ell+1)\lambda^1_1b^{[(k-1)p+\ell+1]}\right)=\\
\frac{1}{2}(\ell-1)\lambda^1_1\lambda^0_{-1}ab^{[(k-1)p+\ell]}\mod F^{2((k-1)p+\ell)},
\end{multline*}
for $k\geq1$ and $\ell\neq 1$. 
Hence, in the spectral sequence, one gets the formula.

\noindent\textbf{Proof of (\ref{diff:h_0b to h_0h_i}).} Put $s=(mp+r)p^i$ for $m\geq0$ and $0\leq r\leq p-1$. By inspection, in $\Lambda\otimes H$, one gets
\begin{multline*}
d\left(\sum_{j=0}^{p-1}\lambda^1_jb^{[s-p^2+(k-j)p+j+1]}+C_1+\cdots+C_{p-k-1}\right)=\\
-(r+1)\lambda^1_0\lambda^1_{p^i-1}b^{[((m-1)p+r+1)p^i-p^2+kp+1]}\\\mod F^{2(((m-1)p+r+1)p^i-p^2+kp+1)-1},
\end{multline*}
where
\[
C_n=\sum_{\ell=0}^{p-1}\binom{p+k+n-\ell}{n}\lambda^1_{np+\ell} b^{[s-(n+1)p^2+(k+n-\ell)p+\ell+1]},
\]
for $i\geq 2$ and $m\geq1$.
Thus, in the spectral sequence, we obtain the formula.

\noindent\textbf{Proof of (\ref{diff:h_iab to h_ih_j, j<i}).} It is immediate.

\noindent\textbf{Proof of (\ref{diff:h_iab to h_(i-1;1,2)}).} By inspection, in $\Lambda\otimes H$, one gets
\begin{multline*}
d\left(\lambda^1_{p^i-1}ab^{[(mp+k)p^{i-1}+p^{i-1}-1]}\right)=
-(k+1)\lambda^1_{p^i-1}\lambda^1_{p^{i-1}-1}ab^{[((m-1)p+k+1)p^{i-1}+p^{i-1}-1]}\\
-\binom{k+2}{2}\lambda^1_{p^i-1}\lambda^1_{2p^{i-1}-1}ab^{[((m-2)p+k+2)p^{i-1}+p^{i-1}-1]}\\\mod F^{2(((m-2)p+k+2)p^{i-1}+p^{i-1}-1)},
\end{multline*}
for $i\geq1$ and $m\geq1$. Since $\lambda^1_{p^i-1}\lambda^1_{p^{i-1}-1}=0$ in $\Lambda$ and the second term of the formula represents $h_{i-1;1,2}ab^{[((m-2)p+k+2)p^{i-1}+p^{i-1}-1]}$, in the spectral sequence, we obtain (\ref{diff:h_iab to h_(i-1;1,2)}) of the lemma.

\noindent\textbf{Proof of (\ref{diff:h_iab to h_(i-1;2,1)}).} In $\Lambda\otimes H$, we have, letting $s=(m-1)p+k$ for $m\geq1$ and $0\leq k\leq p-1$,
\begin{multline*}
d\left(2\lambda^1_{p^i-1}ab^{[(mp+k)p^i+(p-1)p^{i-1}-1]}+(k+1)\lambda^1_{2p^i-1}ab^{[(s+1)p^i+(p-1)p^{i-1}-1]}\right)=\\
((k+1)\lambda^1_{2p^i-1}\lambda^1_{p^{i-1}-1}+2k\lambda^1_{p^i-1}\lambda^1_{p^i+p^{i-1}-1})ab^{[sp^i+p^i-1]}\mod F^{2(sp^i+p^i-1)},
\end{multline*}
for $i\geq1$ and $m\geq1$. 
Since, by adem relation, in $\Lambda$, $\lambda^1_{2p^i-1}\lambda^1_{p^{i-1}-1}+\lambda^1_{p^i-1}\lambda^1_{p^i+p^{i-1}-1}=0$ and the element $\lambda^1_{2p^i-1}\lambda^1_{p^{i-1}-1}ab^{[sp^i+p^i-1]}$ represents the element $h_{i-1;2,1}ab^{[sp^i+p^i-1]}$, in the spectral sequence, we obtain the formula.

\noindent\textbf{Proof of (\ref{diff:h_iab to h_(i;;2,1)}).} Put $u=(p-1)p^{i-1}-1$. In $\Lambda\otimes H$, using adem relation, one gets that the differential of the element

\begin{multline*}
\lambda^1_{p^i-1}ab^{[(mp+k)p^{i+2}+rp^{i+1}+p^i+u]}+\sum_{j=2}^{p-1}\lambda^1_{jp^i-1}ab^{[(mp+k)p^{i+2}+(r-j+1)p^{i+1}+jp^i+u]}\\
+(r+1)\lambda^1_{p^{i+1}+p^i-1}ab^{[(mp+k-1)p^{i+2}+(r+1)p^{i+1}+p^i+u]}\\
+\binom{r+2}{2}\lambda^1_{2p^{i+1}+p^i-1}ab^{[(mp+k-2)p^{i+2}+(r+2)p^{i+1}+p^i+u]}
\end{multline*}
is equal to
\begin{multline*}
\binom{r+2}{2}\lambda^1_{2p^{i+1}-1}\lambda^1_{p^i-1}ab^{[(mp+k-2)p^{i+2}+(r+2)p^{i+1}+p^i+u]}\\
\mod F^{2((mp+k-2)p^{i+2}+(r+2)p^{i+1}+p^i+u)},
\end{multline*}
for $i\geq1$ and $m\geq0$. 

Since, in the spectral sequence, $\lambda^1_{2p^{i+1}-1}\lambda^1_{p^i-1}ab^{[(mp+k-2)p^{i+2}+(r+2)p^{i+1}+p^i+u]}$ represents $h_{i;2,1}ab^{[(mp+k-2)p^{i+2}+(r+2)p^{i+1}+p^i+u]}$, we obtain the formula.

\noindent\textbf{Proof of (\ref{diff:h_iab to h_ih_j 1}).} In $\Lambda\otimes H$, one gets, for $u=(p-1)p^{i-1}-1$ and $v'=(mp+k)p^j$, that the differential of the element
\begin{multline*}
\lambda^1_{p^i-1}ab^{[v'-p^{i+2}+(p-2)p^{i+1}+p^i+u]}+\sum_{\ell=2}^{p-1}\lambda^1_{\ell p^i-1}ab^{[v'-p^{i+2}+(p-1-\ell)p^{i+1}+\ell p^i+u]}\\
-\lambda^1_{p^{i+1}+p^i-1}ab^{[v'-2p^{i+2}+(p-1)p^{i+1}+p^i+u]}
\end{multline*}
is equal to
\begin{equation*}
-(k+1)\lambda^1_{p^i-1}\lambda^1_{p^j-1}ab^{[((m-1)p+k+1)p^j-p^{i+2}+(p-2)p^{i+1}+p^i+u]}\mod F^{2u_1},
\end{equation*}
for $1\leq i\leq j-2$ and $m\geq1$, where $u_1=((m-1)p+k+1)p^j-p^{i+2}+(p-2)p^{i+1}+p^i+u$. Therefore, in the spectral sequence, one obtains the formula.

\noindent\textbf{Proof of (\ref{diff:h_iab to h_ih_j 2}).} In $\Lambda\otimes H$, one gets, for $j-2\geq i\geq 1$ and $u=(p-1)p^{i-1}-1$,
\begin{multline*}
d\left(\lambda^1_{p^i-1}ab^{[v-p^{i+1}+p^i+u]}+\sum_{\ell=2}^{p-1}\lambda^1_{\ell p^i-1}ab^{[v-\ell p^{i+1}+\ell p^i+u]}\right)\\
-(k+1)\lambda^1_{p^i-1}\lambda^1_{p^j-1}ab^{[((m-1)p+k+1)p^j-p^{i+1}+p^i+u]}\mod F^{2u_2},
\end{multline*}
for $m\geq 1$, where $u_2=((m-1)p+k+1)p^j-p^{i+1}+p^i+u$. Hence, in the spectral sequence, we obtain the formula.

\noindent\textbf{Proof of (\ref{diff:h_iab to lambda tilde}).} By inspection, in $\Lambda\otimes H$, we have
\begin{multline*}
d\left(\lambda^1_{p^i-1}ab^{[(mp+k)p^{i+1}+p^i-1]}+\sum_{j=2}^{p-1}\frac{1}{j}\lambda^1_{jp^i-1}ab^{[(mp+k-j+1)p^{i+1}+(j-1)p^i+p^i-1]}\right)\\
=\sum_{j=1}^{p-1}\frac{(-1)^{j+1}}{j}\lambda^1_{(p-j)p^i-1}\lambda^1_{jp^i-1}ab^{[((m-1)p+k+2)p^{i+1}-1]}\mod F^{2(((m-1)p+k+2)p^{i+1}-1)},
\end{multline*}
for $i\geq 0$ and $m\geq 1$. 

Since the right hand side of the latter represents $\tilde{\lambda}_iab^{[((m-1)p+k+2)p^{i+1}-1]}$, we obtain the formula.

\noindent\textbf{Proof of (\ref{diff:h_iab to h_(i;1,2)}).} We have, in $\Lambda\otimes H$, the differential of
\begin{multline*}
\lambda^1_{p^i-1}ab^{[mp^{i+2}+kp^{i+1}+rp^i+p^i-1]}+\sum_{j=2}^{p-r}\frac{\binom{r+j-1}{j-1}}{j}\lambda^1_{jp^i-1}ab^{[mp^{i+2}+(k-j+1)p^{i+1}+(r+j)p^i-1]}\\
+(k+1)\lambda^1_{p^{i+1}+p^i-1}ab^{[(m-1)p^{i+2}+(k+1)p^{i+1}+rp^i+p^i-1]}\\
+\frac{1}{2}k(r+1)\lambda^1_{p^{i+1}+2p^i-1}ab^{[(m-1)p^{i+2}+kp^{i+1}+(r+1)p^i+p^i-1]}
\end{multline*}
is equal to
\begin{multline*}
\frac{1}{2}k(r+1)\lambda^1_{p^{i+1}-1}\lambda^1_{2p^i-1}ab^{[(m-1)p^{i+2}+kp^{i+1}+(r+1)p^i+p^i-1]}\\
-(r+1)\lambda^1_{p^{i+1}+p^i-1}\lambda^1_{p^i-1}ab^{[(m-1)p^{i+2}+kp^{i+1}+(r+1)p^i+p^i-1]}\\
\mod F^{2((m-1)p^{i+2}+kp^{i+1}+(r+1)p^i+p^i-1)},
\end{multline*}
for $i\geq0$ and $m\geq 1$.

Since, by using the adem relation, $\lambda^1_{p^{i+1}-1}\lambda^1_{2p^i-1}+\lambda^1_{p^{i+1}+p^i-1}\lambda^1_{p^i-1}=0$, it follows that, in the spectral sequence, one gets the formula.

\noindent\textbf{Proof of (\ref{diff:h_iab to h_ih_j 3}).} In $\Lambda\otimes H$, for $j-2\geq i\geq0,r>0, m\geq1$ and $u=p^i-1$, the differential of
\begin{multline*}
\lambda^1_{p^i-1}ab^{[(mp+\ell)p^j-p^{i+2}+(p-2)p^{i+1}+rp^i+u]}+\\
\sum_{j=2}^{p-r}\frac{\binom{r+j-1}{j-1}}{j}\lambda^1_{jp^i-1}ab^{[(mp+\ell)p^j-p^{i+2}+(p-1-j)p^{i+1}+(r+j-1)p^i+u]}\\
-\lambda^1_{p^{i+1}+p^i-1}ab^{[(mp+\ell)p^j-2p^{i+2}+(p-1)p^{i+1}+rp^i+u]}\\
-(r+1)\lambda^1_{p^{i+1}+2p^i-1}ab^{[(mp+\ell)p^j-2p^{i+2}+(p-2)p^{i+1}+(r+1)p^i+u]}
\end{multline*}
is equal to
\begin{multline*}
-(\ell+1)\lambda^1_{p^i-1}\lambda^1_{p^j-1}ab^{[((m-1)p+\ell+1)p^j-p^{i+2}+(p-2)p^{i+1}+rp^i+u]}\\
\mod F^{2(((m-1)p+\ell+1)p^j-p^{i+2}+(p-2)p^{i+1}+rp^i+u)}.
\end{multline*}
Therefore, in the spectral sequence, one gets the formula.

\noindent\textbf{Proof of (\ref{diff:h_iab to h_ih_j 4}).} Similarly, in $\Lambda\otimes H$, we also have that, for $j-2\geq i\geq0, m\geq1$ and $k\neq p-2$, the differential of
\begin{multline*}
\lambda^1_{p^i-1}ab^{[(mp+\ell)p^j-p^{i+2}+kp^{i+1}+p^{i+1}-1]}+\\
(k+1)\lambda^1_{p^{i+1}+p^i-1}ab^{[(mp+\ell)p^j-2p^{i+2}+(k+1)p^{i+1}+p^{i+1}-1]}
\end{multline*}
is equal to
\begin{multline*}
-(\ell+1)\lambda^1_{p^i-1}\lambda^1_{p^j-1}ab^{[((m-1)p+\ell+1)p^j-p^{i+2}+kp^{i+1}+p^{i+1}-1]}\\
\mod F^{2(((m-1)p+\ell+1)p^j-p^{i+2}+kp^{i+1}+p^{i+1}-1)}.
\end{multline*}
Hence, in the spectral sequence, we get the formula.
\end{proof}
\section{The mod $p$ Lannes-Zarati homomorphism}\label{sec:LZ homo} 
The destabilization functor $\mathscr{D} \colon \mathcal{M}\rightarrow\mathcal{U}$ is the left adjoint to the inclusion $\mathcal{U} \to \mathcal{M}$. It can be described more explicitly as follows: 
\[
\mathscr{D}(M):=M/EM, 
\]
where $EM:={\rm Span}_{\fp}\{\beta^\epsilon P^ix \colon \epsilon+2i>{\rm deg}(x), x\in M\}$. That $EM$ is an $A$-submodule of $M$ is a consequence of the adem relations. In particular, if $M$ is a graded vector space, considered as an $A$-module with trivial action, then $EM$ is the subspace of elements in negative degrees and, therefore, $\mathscr{D}(M)$ can be identified with the (trivial) $A$-submodule of $M$ consisting of elements in non-negative degrees. This simple observation leads to the following construction. 

For any $A$-module $M$, the projection $M \to\fp\tsor{A}M$ induces an $A$-homomorphism $\mathscr{D}(M)\to \mathscr{D}(\fp\tsor{A}M)$. Thus, there exists a natural $A$-homomorphism $\mathscr{D}(M) \to \fp\tsor{A} M$ which is the composition 
\[
\bfig
\morphism(0,500)<700,0>[\mathscr{D}(M)`\mathscr{D}(\fp\tsor{A}M);]
\morphism(700,500)/^{(}->/<700,0>[\mathscr{D}(\fp\tsor{A}M)`\fp\tsor{A}M.;]
\efig
\]
This in turns induces maps between corresponding derived functors: 
\[
i_s^M \colon \mathscr{D}_s (M) \to \Tor{A}{s}{\fp}{M}. 
\]  

The natural map $i_s^M$ raises the possibility of understanding the homology of the Steenrod algebra via knowledge of derived functors of destabilization. However, it is generally very difficult to compute $\mathscr{D}_s$, except in one important situation in which Lannes and Zarati \cite{Lan-Zar87}, \cite{Zarati.thesis} discover that it can be described in terms of the Singer functors $\mathscr{R}_s$ (see below). 

We proceed to describe Lannes and Zarati discovery. 


Define $\alpha_1(M) \colon \mathscr{D}_{r}(\Sigma^{-1}M)\to \mathscr{D}_{r-1}(P_1\otimes M)$ to be the connecting homomorphism of the functor $\mathscr{D}(-)$ associated to the short exact sequence
\[
0\rightarrow P_1\otimes M\rightarrow \hat{P}\otimes M\rightarrow \Sigma^{-1}M\rightarrow 0,
\]
where $\hat{P}$ is the $A$-module extension of $P_1$ by formally adding a generator $x_1y_1^{-1}$ in degree $-1$. The action of $A$ on $\hat{P}$ is given by declaring $P^n(x_1y_1^{-1})=\binom{-1}{n}x_1y_1^{n(p-1)-1}=(-1)^nx_1y_1^{n(p-1)-1}$ and $\beta(x_1y_1^{-1})=1$, while $A$ acts on $P_1$ in the usual way. It can be verified directly that this gives a well-defined $A$-module structure on $\hat{P}$ which admits $P_1$ as an $A$-submodule.  
 
Put $$\alpha_s(M):=\alpha_1(P_{s-1}\otimes M)\circ\cdots\circ \alpha_1(\
\Sigma^{-(s-1)}M),$$
then $\alpha_s(M)$ is an $A$-linear map from $\mathscr{D}_{r}(\Sigma^{-s}M)$ to $ \mathscr{D}_{r-s}(P_s\otimes M)$. In particular, when $r=s$, we obtain a map $\alpha_s(M) \colon \mathscr{D}_{s}(\Sigma^{-s}M)\to \mathscr{D}_{0}(P_s\otimes M)$. 


On the other hand, for an unstable $A$-module $M$, the Singer  construction $\mathscr{R}_s$ provides a functorial $A$-submodule $\mathscr{R}_s M$ of $P_s\otimes M$. Lannes and Zarati \cite{Lan-Zar87} for $p=2$ and Zarati \cite{Zarati.thesis} for $p$ odd showed that the image of $\alpha_s(\Sigma M):\mathscr{D}_s(\Sigma^{1-s}M)\to \Sigma\mathscr{R}_sM\subset\mathscr{D}_0(P_s\otimes \Sigma M)\cong P_s\otimes \Sigma M$ is an isomorphism.

By the same method in \cite{Lan-Zar87}, \cite{Hung2001}, \cite{Hung-Tuan-preprint} and \cite{Chon_Nhu2019}, for any unstable $A$-module $M$ and for $s\geq0$, there exists a homomorphism $(\bar{\varphi}_s^M)^\#$ such that the following diagram commutes:
\[
\bfig
\ptriangle(0,500)/->`->`.>/<900,500>[\mathscr{D}_s(\Sigma^{1-s}M)`\Sigma\mathscr{R}_sM`\Tor{A}{s}{\fp}{\Sigma^{1-s}M}.;\alpha_s(\Sigma M)`i_s^{\Sigma^{1-s}M}`(\bar{\varphi}_s^M)^\#]
\morphism(900,1000)/^{(}->/<700,0>[\Sigma\mathscr{R}_sM`\Sigma P_s\otimes M;]
\efig
\]


Because the Steenrod algebra $A$ acts trivially on the target, $(\bar{\varphi}_s^M)^\#$ factors through $\fp\tsor{A}\Sigma\mathscr{R}_sM$. Therefore, after suspending $-1$ degree, we obtain the dual of the mod $p$ Lannes-Zarati homomorphism
\[
(\varphi_s^M)^\#:(\fp\tsor{A}\mathscr{R}_sM)^t\rightarrow \Tor{A}{s,t}{\fp}{\Sigma^{-s}M}\cong\Tor{A}{s,s+t}{\fp}{M}.
\]

The linear dual
\[
\varphi_s^M:\Ext{A}{s,t}{M}{\fp}\to (\fp\tsor{A}\mathscr{R}_sM)^\#_t,
\]
is called the mod $p$ Lannes-Zarati homomorphism.



In order to construct a chain-level representation of $\varphi_s^M$, we need to recall some main points of the Singer construction.

Let $\Sigma_{p^s}$ be the symmetric group (of all points) of the group $E_s:=(\mathbb{Z}/p)^s$ and $r_s:E_s\hookrightarrow\Sigma_{p^s}$ be the inclusion via the action by translations. Denote $\mathbb{Z}/p$ the trivial $\Sigma_{p^s}$-module of $\mathbb{Z}/p$ and $\mathcal{Z}/p$ the $\Sigma_{p^s}$-module of $\mathbb{Z}/p$ via the signature action. Put
\[
\mathscr{B}[s]:=\im\left(H^*(B\Sigma_{p^s};\mathbb{Z}/p)\xrightarrow{r_s^*} H^*(BE_s;\mathbb{Z}/p)\right);
\]
\[
\mathcal{B}[s]:=\im\left(H^*(B\Sigma_{p^s};\mathcal{Z}/p)\xrightarrow{r_s^*} H^*(BE_s;r_s^*\mathcal{Z}/p)\right).
\]

The structure of $\mathscr{B}[s]$ and $\mathcal{B}[s]$ are given by the following proposition.
\begin{proposition}[M\`ui \cite{Mui75}, Zarati \cite{Zarati.thesis}]\label{pro:B[s]}
\begin{enumerate}
\item $\mathscr{B}[s]$ is a free $D[s]$-module generated by
\[
\left\{1,M_{s;i_1,\dots,i_k}L_s^{p-2+(p-1)[\frac{k-1}{2}]}\right\}
\]
for $0\leq i_1<\cdots<i_k\leq s-1$.
\item $\mathcal{B}[s]$ is a free $D[s]$-module generated by
\[
\left\{L_s^{\frac{p-1}{2}},M_{s;i_1,\dots,i_k}L_s^{\frac{p-3}{2}+(p-1)[\frac{k}{2}]}\right\}
\]
for $0\leq i_1<\cdots<i_k\leq s-1$.
\end{enumerate}
Here denote $[x]$ the largest integer number that is not greater than $x$.
\end{proposition}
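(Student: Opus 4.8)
This structure theorem is classical: part (1) is due to Mùi \cite{Mui75} and part (2) to Zarati \cite{Zarati.thesis}, and in the paper I would simply cite those sources. For completeness I indicate the shape of the argument. The plan is to induct on $s$. When $s=1$, the restriction $r_1^*$ identifies $\mathscr{B}[1]$ (respectively $\mathcal{B}[1]$) with the invariants of $H^*(B\mathbb{Z}/p)=E(x_1)\otimes\fp[y_1]$ under the Weyl group $\mathrm{Aut}(\mathbb{Z}/p)\cong\mathbb{Z}/(p-1)$ acting with trivial (respectively signature) coefficients. A primitive root $\zeta$ scales $y_1$ by $\zeta$ and $x_1$ by $\zeta$, and the associated permutation of the $p$ points is a $(p-1)$-cycle, hence of sign $-1$; reading off the invariants gives $\mathscr{B}[1]=\fp[y_1^{p-1}]\{1,\,x_1y_1^{p-2}\}$ and $\mathcal{B}[1]=\fp[y_1^{p-1}]\{y_1^{(p-1)/2},\,x_1y_1^{(p-3)/2}\}$, which are the stated bases since $q_{1,0}=y_1^{p-1}$, $L_1=y_1$, $M_{1;0}=x_1$ and the floor functions vanish for $k=1$.

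For the inductive step I would realise $E_s$ inside the iterated wreath product $\Sigma_p\wr\cdots\wr\Sigma_p\hookrightarrow\Sigma_{p^s}$ through $E_s=\mathbb{Z}/p\times E_{s-1}\hookrightarrow\mathbb{Z}/p\wr E_{s-1}$, the outer $\mathbb{Z}/p$ permuting the $p$ blocks and $E_{s-1}$ acting diagonally by its regular representation on each block. Nakaoka's theorem --- or, at an odd prime, the Evens norm / total power construction, which is exactly the stable total power $S_s$ of Section 2 --- describes the restriction of $H^*(B(\Sigma_p\wr\Sigma_{p^{s-1}}))$ to $H^*(B(\mathbb{Z}/p\wr E_{s-1}))$; composing with the inductive description of $H^*B\Sigma_{p^{s-1}}\to H^*BE_{s-1}$ and restricting further to $E_s$ exhibits $\mathscr{B}[s]$ (respectively $\mathcal{B}[s]$) as generated over a Frobenius twist of $\mathscr{B}[s-1]$ by the classes $q_{s,0}$, $V_s^{p-1}$ (respectively $V_s^{(p-1)/2}$) and $M_{s;s-1}$ supplied by the new $\mathbb{Z}/p$-factor. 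Propagating a $D[s-1]$-basis to a $D[s]$-basis is then a matter of using the recursions $q_{s,i}=q_{s-1,i-1}^p+q_{s-1,i}V_s^{p-1}$, $L_s=V_sL_{s-1}$ and $M_{s;i}=M_{s-1;i}V_s+q_{s-1,i}M_{s;s-1}$ from Section 2, together with the relations of Theorem \ref{thm:invariant of G}(3) to collapse products $M_{s;i_1}\cdots M_{s;i_k}$ to the Mùi monomials $M_{s;i_1,\dots,i_k}$.

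The main obstacle --- and essentially the only delicate point --- is the bookkeeping of the exponent of $L_s$ attached to each $M_{s;i_1,\dots,i_k}$. Each new odd Mùi generator carries a half-integer degree shift relative to the polynomial (Dickson) part, and since $R_{s;i}^2=0$ forces these shifts to accumulate only on every second exterior factor, their running sum over the $k$ factors is a step function jumping by $p-1$ every two factors, which produces $(p-1)[\frac{k-1}{2}]$ with trivial coefficients and $(p-1)[\frac{k}{2}]$ with signature coefficients (the twist shifting the first jump one step earlier and replacing the unit class $1$ by the semi-invariant $L_s^{(p-1)/2}$). To close the argument one must check that the resulting monomials are $D[s]$-linearly independent and that no further elements occur: independence follows by inverting $L_s$ and comparing with the explicit basis of $\Gamma_s=\Phi_s^{GL_s}=E(R_{s;0},\dots,R_{s;s-1})\otimes\fp[q_{s,0}^{\pm1},q_{s,1},\dots,q_{s,s-1}]$ recalled in Section 2, while the absence of anything extra is a Poincaré-series comparison with the known Poincaré series of $H^*(B\Sigma_{p^s};\fp)$ --- equivalently, with that of the length-$s$ Dyer--Lashof monomials, in keeping with the identification $R_s\cong\mathscr{B}[s]^\#$ used later in Section 2. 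All of this is carried out in \cite{Mui75} and \cite{Zarati.thesis}.
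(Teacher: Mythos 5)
Your proposal is correct and takes the same approach as the paper: the paper states Proposition \ref{pro:B[s]} as a citation to M\`ui \cite{Mui75} and Zarati \cite{Zarati.thesis} without giving its own proof, exactly as you indicate you would do. The sketch you append for completeness (the $s=1$ Weyl-group computation with trivial versus signature coefficients, followed by the iterated wreath-product induction and the bookkeeping via $R_{s;i}=M_{s;i}L_s^{p-2}$ and $q_{s,0}=L_s^{p-1}$) is consistent with the classical argument in those references.
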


For any unstable $A$-module $M$, the (unstable) total power $St_s(x_1,y_1,\dots,x_s,y_s;m)$, for $m\in M$, is defined as follows (see Zarati \cite{Zarati.thesis})
\[
St_s(x_1,y_1,\dots,x_s,y_s;m):=(-1)^{s\left[\frac{|m|}{2}\right]}L_s^{\frac{p-1}{2}|m|}S_s(m).
\]

For convenience, we put $St_s(m):=St_s(x_1,y_1,\dots,x_s,y_s;m)$ and $St_s(M):=\{St_s(m):m\in M\}$.

Given an unstable $A$-module $M$, the module $\mathscr{R}_sM$ is defined by (see Zarati \cite{Zarati.thesis})
\[
\mathscr{R}_sM=\mathscr{B}[s]\cdot St_s(M^+)\oplus \mathcal{B}[s]\cdot St_s(M^-),
\]
 where $M^+$ (resp. $M^-$) is the subspace consisting of all elements of even degree (resp. odd degree) of $M$. Then, for each $s\geq0$, the assignment $M\rightsquigarrow\mathscr{R}_sM$ provides an exact functor from $\mathscr{U}$ to itself.

\begin{proposition}[Chơn-Như \cite{Chon_Nhu2019}]\label{pro: R_sM subset Gamm_sM}
For $M$ an unstable $A$-module, $\mathscr{R}_sM$ is actually contained in $(\Gamma^+M)_s$.
\end{proposition}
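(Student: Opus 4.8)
The plan is to realise both $\mathscr{R}_sM$ and $(\Gamma^+M)_s=\Gamma_s^+S_s(M)$ as submodules of one ambient space, $\Delta_s\otimes M$, and to verify the inclusion on the spanning elements $\omega\,St_s(m)$ of $\mathscr{R}_sM$. First I would note that for homogeneous $m$ the total power $S_s(m)$ is a \emph{finite} sum: iterating the instability inequality $2i_k+\epsilon_k\le\deg(\beta^{\epsilon_{k+1}}P^{i_{k+1}}\cdots\beta^{\epsilon_s}P^{i_s}m)$ bounds the multi-indices contributing a nonzero coefficient, so $S_s(m)\in\Delta_s\otimes M$; since $L_s\in\Delta_s$ it follows that $St_s(m)=(-1)^{s[|m|/2]}L_s^{\frac{p-1}{2}|m|}S_s(m)\in\Delta_s\otimes M$, and together with the next step this shows $\mathscr{R}_sM\subseteq\Delta_s\otimes M$, so the asserted inclusion makes sense.

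Next I would make the invariant part of $St_s(m)$ explicit. Putting $i=0$ in \eqref{eq:q_s,i in V_i} and using the convention $q_{s,-1}=0$ gives $q_{s,0}=q_{s-1,0}V_s^{p-1}=\cdots=\prod_{i=1}^sV_i^{p-1}=L_s^{p-1}$; a short induction on $i$ shows that each $V_i$ is a monomial in $v_1,\dots,v_i$, hence $q_{s,0}=\prod_jv_j^{(p-1)c_j}$ with every $c_j\ge1$, so that $q_{s,0}\in\Gamma_s\cap\Delta_s^+=\Gamma_s^+$ and, since $\Gamma_s^+$ is a subalgebra, so is every non-negative power of $q_{s,0}$. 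Splitting $\tfrac{p-1}{2}|m|$ by the parity of $|m|$ yields
\begin{align*}
St_s(m)&=\pm\,q_{s,0}^{\lfloor|m|/2\rfloor}\,S_s(m), && |m|\ \text{even},\\
St_s(m)&=\pm\,q_{s,0}^{(|m|-1)/2}\bigl(L_s^{(p-1)/2}S_s(m)\bigr), && |m|\ \text{odd}.
\end{align*}

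Then I would invoke Mùi's invariant theory. Using Proposition~\ref{pro:B[s]}, Theorem~\ref{thm:invariant of G}(3), the relation $R_{s;i_1,\dots,i_k}=M_{s;i_1,\dots,i_k}L_s^{p-2}$ and $L_s^{p-1}=q_{s,0}$, the $D[s]$-generators of $\mathscr{B}[s]$ become $\pm R_{s;i_1}\cdots R_{s;i_k}q_{s,0}^{a}$ and those of $L_s^{(p-1)/2}\mathcal{B}[s]$ become $\pm R_{s;i_1,\dots,i_k}q_{s,0}^{b}$; since $D[s]\subseteq\fp[q_{s,0}^{\pm1},q_{s,1},\dots,q_{s,s-1}]\subseteq\Gamma_s$ and each $R_{s;i}$ is an exterior generator of $\Gamma_s$, this places $\mathscr{B}[s]$ and $L_s^{(p-1)/2}\mathcal{B}[s]$ inside $\Gamma_s$, and in fact inside $\Gamma_s^+$ — for $\mathscr{B}[s]$ this is exactly the identification, recalled in Section~2, of $\nu_s|_{\mathscr{B}[s]}$ with $\mathscr{B}[s]\cong R_s^\#\subseteq\Lambda_s^\#$ (the domain of $\nu_s$ being $\Gamma_s^+$), and for $\mathcal{B}[s]$ it follows from the analogous positivity of its generators. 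Granting this, the proof finishes: for $m\in M^+$, $\omega\in\mathscr{B}[s]$ the first identity above gives $\omega\,St_s(m)=\pm(\omega q_{s,0}^{\lfloor|m|/2\rfloor})S_s(m)\in\Gamma_s^+S_s(M)=(\Gamma^+M)_s$, and for $m\in M^-$, $\omega\in\mathcal{B}[s]$ the second gives $\omega\,St_s(m)=\pm q_{s,0}^{(|m|-1)/2}(\omega L_s^{(p-1)/2})S_s(m)\in(\Gamma^+M)_s$; as $\mathscr{R}_sM=\mathscr{B}[s]St_s(M^+)\oplus\mathcal{B}[s]St_s(M^-)$ is $\fp$-spanned by such elements and $(\Gamma^+M)_s$ is a linear subspace, $\mathscr{R}_sM\subseteq(\Gamma^+M)_s$.

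\textbf{The main obstacle} is the positivity asserted in the third step: that Mùi's normalized generators of $\mathscr{B}[s]$ and $\mathcal{B}[s]$, once rewritten in the coordinates $u_i=M_{i;i-1}/L_{i-1}$ and $v_i=V_i/q_{i-1,0}$, have all their $v_j$-exponents of the admissible shape $(p-1)i_j-\epsilon_j$ with $i_j\ge\epsilon_j$, so that they lie in $\Gamma_s^+$ and not merely in $\Gamma_s$ (this is already forced by the statement for $M=\fp$, where $St_s(1)=1$ and the claim reduces to $\mathscr{R}_s\fp=\mathscr{B}[s]\subseteq\Gamma_s^+$). For $\mathscr{B}[s]$ it is encoded in the known isomorphism $\mathscr{B}[s]\cong R_s^\#$; a self-contained argument, and the case of $\mathcal{B}[s]$, requires pushing the recursions for $V_s$, $M_{s;i_1,\dots,i_k}$ and $R_{s;i}$ through this change of variables and checking the relevant congruences and inequalities by induction on $s$. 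Every other step is formal.
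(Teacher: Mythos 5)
The paper cites this proposition from Part~I \cite{Chon_Nhu2019} and does not reproduce its proof, so there is nothing here to compare your plan against directly; I can only assess it on its own terms. Your reduction is sound: instability makes $S_s(m)$ a finite element of $\Delta_s\otimes M$; the identity $q_{s,0}=\prod_{i=1}^{s}V_i^{p-1}=L_s^{p-1}$ is correct; and splitting $L_s^{\frac{p-1}{2}|m|}$ by the parity of $|m|$ rewrites $\omega\,St_s(m)$ as $\pm(\omega\,q_{s,0}^{\lfloor|m|/2\rfloor})S_s(m)$ (resp.\ $\pm q_{s,0}^{(|m|-1)/2}(\omega L_s^{(p-1)/2})S_s(m)$), so the proposition reduces to the two inclusions $\mathscr{B}[s]\subseteq\Gamma_s^+$ and $L_s^{(p-1)/2}\mathcal{B}[s]\subseteq\Gamma_s^+$ together with the facts, which you verify, that $q_{s,0}\in\Gamma_s^+$ and that $\Gamma_s^+$ is a subalgebra (the constraint $i_1\ge\epsilon_1$ is additive).

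The gap you flag is genuine and is where the entire content of the proposition lives. The justification you give for $\mathscr{B}[s]\subseteq\Gamma_s^+$ --- invoking ``the identification of $\nu_s|_{\mathscr{B}[s]}$ with $\mathscr{B}[s]\cong R_s^\#$'' --- is circular: restricting $\nu_s$ to $\mathscr{B}[s]$ already presupposes $\mathscr{B}[s]\subseteq\Gamma_s^+$, which is what must be proved; and for $\mathcal{B}[s]$ no argument is offered. The inclusion is not formal: by Theorem~\ref{thm:invariant of G}(3) the Mùi generator $R_{s;i_1,\dots,i_k}q_{s,0}^{[\frac{k-1}{2}]}$ equals $\pm R_{s;i_1}\cdots R_{s;i_k}q_{s,0}^{-[\frac{k}{2}]}$, so a \emph{negative} power of $q_{s,0}$ appears and one cannot conclude by just multiplying elements of $\Gamma_s^+$; one must actually check that every monomial in the $(u,v)$-expansion has $v_1$-exponent of the form $(p-1)i_1-\epsilon_1$ with $i_1\ge\epsilon_1$ (and, separately, that each $q_{s,i}\in\Gamma_s^+$ for $i>0$, so the $D[s]$-action stays inside $\Gamma_s^+$ --- you only place $D[s]$ in $\Gamma_s$). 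This bookkeeping is precisely what the paper's own Proposition~\ref{pro:additive basis of R_sM}, Lemma~\ref{lm:phi_n} and Lemma~\ref{lm:express Mui in uv} encode: the $\fp$-basis $\mathscr{C}$ of $\mathscr{R}_sM$ is indexed by $\mathscr{J}_{|m|}$, the bijection $\phi_{|m|}$ carries $\mathscr{J}_{|m|}$ onto admissible strings of excess $\ge|m|\ge0$ (which in particular forces $i_1\ge\epsilon_1$), and the leading-term lemma places each basis element in $\Delta_s^+S_s(M)$. So the overall plan is viable and consistent with the paper's machinery, but as written the decisive inclusion is asserted rather than established.
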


From Lemma 2.6 and Corollary 2.7 in \cite{Chon_Nhu2019}, we obtain the following result.
\begin{proposition}\label{pro:expan R_sM in Gamma M}
Given an unstable $A$-module $M$, for any $\gamma\in \mathscr{R}_sM$, the element $\gamma$ can be expressed as follows
\[
\gamma=\sum_{I=(\epsilon_1,i_1,\dots,\epsilon_s,i_s)\in\mathcal{I}}\omega_I(-1)^{s[\frac{|m|}{2}]}u_1^{\epsilon_1}v_1^{(p-1)i_1-\epsilon_1}\cdots u_s^{\epsilon_s}v_s^{(p-1)i_s-\epsilon_s}S_s(m)
\]
where $m\in M$, $\epsilon_k=0,1$, $i_k\geq \epsilon_k$ for $1\leq k\leq s$ and $e(I)\geq |m|$.
\end{proposition}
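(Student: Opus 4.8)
The plan is to reduce the statement to a single generator of $\mathscr{R}_s M$ and then transport to it the excess estimate already obtained in Part I. By the very definition $\mathscr{R}_s M=\mathscr{B}[s]\cdot St_s(M^+)\oplus\mathcal{B}[s]\cdot St_s(M^-)$ together with the freeness of $\mathscr{B}[s]$ and $\mathcal{B}[s]$ over the Dickson algebra $D[s]=\fp[q_{s,0},\dots,q_{s,s-1}]$ (Proposition \ref{pro:B[s]}), every $\gamma\in\mathscr{R}_s M$ is a finite $\fp$-linear combination of elements
\[
q_{s,0}^{a_0}\cdots q_{s,s-1}^{a_{s-1}}\,g\,St_s(m),
\]
where $m$ is a homogeneous element of $M$ and $g$ is one of the explicit generators listed in Proposition \ref{pro:B[s]} (drawn from $\mathscr{B}[s]$ when $|m|$ is even, from $\mathcal{B}[s]$ when $|m|$ is odd). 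Since the asserted expansion is additive in $\gamma$, it suffices to prove it for one such element, and then $m$ is genuinely fixed, as in the statement.

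Next I would substitute the defining identity $St_s(m)=(-1)^{s[|m|/2]}L_s^{\frac{p-1}{2}|m|}S_s(m)$. This pulls the global sign $(-1)^{s[|m|/2]}$ out in front, exactly as required, and reduces the task to showing that
\[
q_{s,0}^{a_0}\cdots q_{s,s-1}^{a_{s-1}}\,g\,L_s^{\frac{p-1}{2}|m|}\,S_s(m)
\]
lies in $\Gamma_s^+\cdot S_s(m)$ and, once written out in the monomial basis $u_1^{\epsilon_1}v_1^{(p-1)i_1-\epsilon_1}\cdots u_s^{\epsilon_s}v_s^{(p-1)i_s-\epsilon_s}$ of $\Gamma_s^+$, involves only indices $I$ with $e(I)\ge|m|$. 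Membership in $\Gamma_s^+\cdot S_s(m)$ --- which in particular forces $\epsilon_k\in\{0,1\}$ and $i_k\ge\epsilon_k$, the latter being precisely the condition for $\lambda^{\epsilon_k}_{i_k-1}$ to be a generator of $\Lambda$ --- is Proposition \ref{pro: R_sM subset Gamm_sM}; the concrete rewriting into the $u_j,v_j$ variables is carried out via the identities $u_i=M_{i;i-1}/L_{i-1}$, $v_i=V_i/q_{i-1,0}$ and the inductive formulas $q_{s,i}=q_{s-1,i-1}^p+q_{s-1,i}V_s^{p-1}$, $M_{s;i}=M_{s-1;i}V_s+q_{s-1,i}M_{s;s-1}$ recalled in Section 2.

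The remaining point --- that every monomial $I$ surviving in this expansion satisfies $e(I)\ge|m|$ --- is exactly the substance of Lemma 2.6 and Corollary 2.7 of \cite{Chon_Nhu2019}. Lemma 2.6 pins down the excess of each monomial occurring in $L_s^{\frac{p-1}{2}|m|}S_s(m)$: the prefactor $L_s^{\frac{p-1}{2}|m|}$ raises the excess of the ``bad'' (large $i_k$) terms of $S_s(m)$ by exactly enough to push it up to $\ge|m|$. Corollary 2.7 then records that multiplying further by an element of $D[s]$, or by one of the generators $g$ --- each a product of powers of $L_s$, of $q_{s,i}$'s and of $M_{s;\ast}$'s --- can only increase the excess. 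Assembling the three ingredients gives the claimed form of $\gamma$. I expect the only genuine work to be bookkeeping: checking that the generators $g$ of $\mathscr{B}[s]$ and $\mathcal{B}[s]$ fall within the scope of Corollary 2.7 and that no monomial of excess $<|m|$ slips in during the change of basis --- the delicate inequality itself having already been settled in Part I.
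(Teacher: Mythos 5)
Your proposal is correct and follows the same route as the paper: the paper in fact gives no proof at all beyond the one-line citation ``From Lemma 2.6 and Corollary 2.7 in \cite{Chon_Nhu2019}, we obtain the following result,'' which is precisely the ingredient you invoke for the excess estimate $e(I)\geq|m|$. Your additional scaffolding --- reduction to a single $D[s]$-generator times $St_s(m)$ via the freeness statement in Proposition \ref{pro:B[s]}, the substitution $St_s(m)=(-1)^{s[|m|/2]}L_s^{\frac{p-1}{2}|m|}S_s(m)$, and the appeal to Proposition \ref{pro: R_sM subset Gamm_sM} for $\Gamma_s^+$-membership --- is a faithful unpacking of the deduction the paper leaves implicit.
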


A chain-level representation of $(\varphi_s^M)^\#$ in the Singer-Hưng-Sum chain complex is given by the following theorem.
\begin{theorem}[{Chơn-Như \cite[Theorem 3.1]{Chon_Nhu2019}}]\label{thm:representation of varphi_s in Gamma}
For any unstable $A$-module $M$, the inclusion $(\widetilde{\varphi}_s^M)^\#:\mathscr{R}_sM\to(\Gamma^+M)_s$ given by 
\[
\gamma\mapsto (-1)^{\frac{(s-2)(s-1)}{2}}\gamma
\]
 is a chain-level representation of the dual of the mod $p$ Lannes-Zarati homomorphism $(\varphi_s^M)^\#$.
\end{theorem}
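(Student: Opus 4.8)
The plan is to verify the two properties required of a chain-level representative: that the sign-twisted inclusion $\gamma\mapsto(-1)^{(s-2)(s-1)/2}\gamma$ carries $\mathscr{R}_sM$ into the cycles of the Singer--H\horn{u}ng--Sum complex, so that it is a chain map out of the complex consisting of $\mathscr{R}_sM$ concentrated in homological degree $s$, and that the map it induces on $H_s$ coincides with $(\varphi_s^M)^\#$ under the identification $H_s(\Gamma^+M)\cong\Tor{A}{s}{\fp}{M}$ of \cite{Hung.Sum1995}.

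The cycle condition can be read off directly. By Proposition~\ref{pro:expan R_sM in Gamma M}, a general $\gamma\in\mathscr{R}_sM$ is a sum of terms $(-1)^{s[|m|/2]}u_1^{\epsilon_1}v_1^{(p-1)i_1-\epsilon_1}\cdots u_s^{\epsilon_s}v_s^{(p-1)i_s-\epsilon_s}S_s(m)$ with $e(I)\ge|m|$, and applying~\eqref{eq:diff of gamma tensor M} produces only terms of the form $v_{\epsilon,\ell}S_{s-1}(\beta^{1-\epsilon}P^\ell m)$; the excess constraint $e(I)\ge|m|$ together with the Adem relations organizing $\Gamma_s^+$ forces these to cancel, which is essentially the computation behind Proposition~\ref{pro: R_sM subset Gamm_sM}. (Cyclehood also falls out of the homology step below, once the chain model is in hand.)

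For the homology statement I would unwind, at chain level, the zig-zag defining $(\varphi_s^M)^\#$. Three ingredients are needed. First, a chain-level model of the connecting homomorphism $\alpha_1(N)$ attached to $0\to P_1\otimes N\to\hat P\otimes N\to\Sigma^{-1}N\to0$, obtained from a projective resolution together with the explicit action $P^n(x_1y_1^{-1})=(-1)^nx_1y_1^{n(p-1)-1}$, $\beta(x_1y_1^{-1})=1$ on $\hat P$; iterating it $s$ times yields a chain model of $\alpha_s(\Sigma M)$. Second, a chain-level model of the natural transformation $i_s^N\colon\mathscr{D}_s(N)\to\Tor{A}{s}{\fp}{N}$ realized through $\Gamma^+N$, using $H_s(\Gamma^+N)\cong\Tor{A}{s}{\fp}{N}$ and the fact that both sides are derived from the same destabilization data. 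Third, the isomorphism $\alpha_s(\Sigma M)\colon\mathscr{D}_s(\Sigma^{1-s}M)\cong\Sigma\mathscr{R}_sM$ of \cite{Lan-Zar87,Zarati.thesis}. The heart of the matter is to recognize that iterating the connecting homomorphism reproduces exactly the stable total power $S_s(m)$ decorated by the Dickson--M\`ui classes $u_i,v_i$, i.e. exactly the elements spanning $\mathscr{R}_sM$ inside $(\Gamma^+M)_s$ via the decomposition $\mathscr{R}_sM=\mathscr{B}[s]\cdot St_s(M^+)\oplus\mathcal{B}[s]\cdot St_s(M^-)$ and Proposition~\ref{pro:B[s]}. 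Composing with the chain model of $i_s$ and desuspending $s$ times then exhibits $(\varphi_s^M)^\#$ as the map induced by the inclusion $\mathscr{R}_sM\hookrightarrow(\Gamma^+M)_s$, up to an overall sign depending only on $s$.

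Fixing that overall sign is the main obstacle, and it is where the factor $(-1)^{(s-2)(s-1)/2}$ has to be produced. It accumulates from $s$ successive connecting homomorphisms, the $s$-fold (de)suspension isomorphisms governed by $\theta(\Sigma^sm)=(-1)^{s\deg\theta}\Sigma^s(\theta m)$, the normalization $(-1)^{s[|m|/2]}$ built into $St_s(m)$, and the sign-corrected isomorphism $\nu$ of \cite{Hung.Sum1995} (carrying the extra $-1$ on the operations $\rho$ and $\chi$). The delicate point is that all contributions depending on $|m|$ cancel, leaving the universal constant $(-1)^{(s-2)(s-1)/2}$; I would prove this by induction on $s$, carrying the sign through a single application of $\alpha_1$ at a time and anchoring the induction on $s=0$, where the map is immediately identified.
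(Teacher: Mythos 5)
The paper does not contain a proof of this theorem: the statement is imported verbatim from the companion paper Part~I and cited as Theorem~3.1 of \cite{Chon_Nhu2019}, so there is no in-paper argument against which to check your sketch step by step.

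Taken on its own merits, your outline identifies the right skeleton---show that the twisted inclusion lands in cycles of $\Gamma^+M$, then unwind the zig-zag defining $\alpha_s(\Sigma M)$ and $i_s$ at the chain level and chase the overall sign---but both of the genuinely delicate steps are left as gaps. The appeal to Proposition~\ref{pro: R_sM subset Gamm_sM} for cyclehood is misdirected: that proposition only asserts the containment $\mathscr{R}_sM\subset(\Gamma^+M)_s$; it says nothing about the vanishing of $\partial$. From \eqref{eq:diff of gamma tensor M}, a single monomial $u_1^{\epsilon_1}v_1^{(p-1)i_1-\epsilon_1}\cdots u_s^{\epsilon_s}v_s^{(p-1)i_s-\epsilon_s}S_s(m)$ contributes $\pm u_1^{\epsilon_1}v_1^{(p-1)i_1-\epsilon_1}\cdots u_{s-1}^{\epsilon_{s-1}}v_{s-1}^{(p-1)i_{s-1}-\epsilon_{s-1}}S_{s-1}(\beta^{1-\epsilon_s}P^{i_s}m)$ to the boundary, and instability of $M$ kills $\beta^{1-\epsilon_s}P^{i_s}m$ only when $1-\epsilon_s+2i_s>|m|$; the excess bound $e(I)\geq|m|$ of Proposition~\ref{pro:expan R_sM in Gamma M} constrains $2i_1-\epsilon_1-\sum_{k=2}^{s}2(p-1)i_k+\sum_{k=2}^s\epsilon_k$ and does not by itself produce that lower bound on $i_s$, so cyclehood must come from cancellation across the monomials of $\gamma$ and needs its own argument. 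Your parenthetical that cyclehood ``falls out of the homology step'' is circular, since identifying the induced map on $H_s$ already presupposes a chain map. Likewise the sign $(-1)^{(s-2)(s-1)/2}$---which you rightly flag as the crux---is not actually produced: the induction you invoke, threading one application of $\alpha_1$ at a time through the $(-1)^{s[|m|/2]}$ normalization of $St_s$ and the sign correction built into $\nu$, is precisely the content of the theorem, and asserting it as a to-do leaves the argument a plan rather than a proof.
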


In order to construct a chain-level representation of $\varphi_s^M$, we need to investigate more carefully the structure of the dual of the Singer construction.

\begin{proposition}\label{pro:additive basis of R_sM}
Given $M$ an unstable $A$-module, $\mathscr{R}_sM$ has an $\fp$-basis given by
\[
\mathscr{C}:=\left\{R_{s;0}^{\sigma_1}q_{s,0}^{j_1}\cdots R_{s;s-1}^{\sigma_s}q_{s,s-1}^{j_s}S_s(m)\right\}
\]
for all $m\in M$, $\sigma_k\in\{0,1\},j_1\in\mathbb{Z}$, $j_k\geq0,2\leq k\leq s$ and $2j_1+\sigma_1+\cdots+\sigma_s\geq|m|$.
\end{proposition}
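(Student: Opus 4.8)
The plan is to construct $\mathscr{C}$ directly from the defining decomposition $\mathscr{R}_sM=\mathscr{B}[s]\cdot St_s(M^+)\oplus\mathcal{B}[s]\cdot St_s(M^-)$ together with the explicit $D[s]$-module bases of $\mathscr{B}[s]$ and $\mathcal{B}[s]$ from Proposition \ref{pro:B[s]}, translating everything into monomials in the $R_{s;i}$'s and $q_{s,i}$'s by means of Theorem \ref{thm:invariant of G}(3) and one auxiliary identity. First I would record that $L_s^{p-1}=q_{s,0}$: by \eqref{eq:q_s,i in V_i} with $i=0$ and the convention $q_{s-1,-1}=0$ one has $q_{s,0}=q_{s-1,0}V_s^{p-1}$, and since $V_1=L_1=y_1$ and $q_{1,0}=L_{1,0}/L_1=y_1^{p-1}$, an induction combined with $L_s=\prod_{i=1}^sV_i$ (which comes from $V_s=L_s/L_{s-1}$) gives $q_{s,0}=\prod_{i=1}^sV_i^{p-1}=L_s^{p-1}$. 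Consequently $St_s(m)=(-1)^{s[\frac{|m|}{2}]}L_s^{\frac{p-1}{2}|m|}S_s(m)$ equals $\pm q_{s,0}^{|m|/2}S_s(m)$ when $|m|$ is even and $\pm q_{s,0}^{(|m|-1)/2}L_s^{\frac{p-1}{2}}S_s(m)$ when $|m|$ is odd.

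Next I would fix an $\fp$-basis of $M$ and rewrite one summand $\mathscr{B}[s]\cdot St_s(m)$ (resp. $\mathcal{B}[s]\cdot St_s(m)$) at a time. If $|m|$ is even, the identity above and Proposition \ref{pro:B[s]}(1) present $\mathscr{B}[s]\cdot St_s(m)=\mathscr{B}[s]\cdot q_{s,0}^{|m|/2}S_s(m)$ as a free $D[s]$-module on $q_{s,0}^{|m|/2}S_s(m)$ and on the elements $M_{s;i_1,\dots,i_k}L_s^{p-2+(p-1)[\frac{k-1}{2}]}q_{s,0}^{|m|/2}S_s(m)$. Using $R_{s;i_1,\dots,i_k}=M_{s;i_1,\dots,i_k}L_s^{p-2}$, the identity $L_s^{p-1}=q_{s,0}$, and $R_{s;i_1}\cdots R_{s;i_k}=(-1)^{k(k-1)/2}R_{s;i_1,\dots,i_k}q_{s,0}^{k-1}$, each generator becomes, up to sign, $R_{s;i_1}\cdots R_{s;i_k}\,q_{s,0}^{\,|m|/2-\lfloor k/2\rfloor}S_s(m)$; tensoring with the polynomial generators $q_{s,1},\dots,q_{s,s-1}$ produces the $\fp$-basis $\{R_{s;0}^{\sigma_1}q_{s,0}^{j_1}\cdots R_{s;s-1}^{\sigma_s}q_{s,s-1}^{j_s}S_s(m)\}$ with $\sigma_k\in\{0,1\}$, $j_k\geq0$ for $k\geq2$, and $j_1\geq|m|/2-\lfloor(\sigma_1+\cdots+\sigma_s)/2\rfloor$; since $|m|$ is even this last inequality is exactly $2j_1+\sigma_1+\cdots+\sigma_s\geq|m|$. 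The case $|m|$ odd is identical via Proposition \ref{pro:B[s]}(2): there $\mathcal{B}[s]\cdot St_s(m)=\mathcal{B}[s]\cdot q_{s,0}^{(|m|-1)/2}L_s^{\frac{p-1}{2}}S_s(m)$, the extra factor $L_s^{\frac{p-1}{2}}$ merges with the $L_s$-powers of the generators (and with the generator $L_s^{\frac{p-1}{2}}$ itself, producing $L_s^{p-1}=q_{s,0}$), and after the same manipulation a parity check, now with $|m|$ odd, again collapses the constraint to $2j_1+\sigma_1+\cdots+\sigma_s\geq|m|$.

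To conclude, the direct-sum decomposition of $\mathscr{R}_sM$ shows that the union of the bases obtained above, taken over a chosen $\fp$-basis of $M$, spans $\mathscr{R}_sM$ (the statement as written being the customary abuse, $S_s$ being additive in $m$). For linear independence I would use Proposition \ref{pro: R_sM subset Gamm_sM} together with the isomorphism $\nu^M$: since $\mathscr{R}_sM\subseteq(\Gamma^+M)_s$ and $\nu^M$ is an isomorphism of $\fp$-vector spaces carrying elements of the form $vS_s(m)$ to scalar multiples of $\nu_s(v)\otimes m$, it suffices to observe that the monomials $R_{s;0}^{\sigma_1}q_{s,0}^{j_1}\cdots q_{s,s-1}^{j_s}$ with $\sigma_k\in\{0,1\}$, $j_1\in\mathbb Z$, $j_k\geq0$ ($k\geq2$) are pairwise distinct members of the standard $\fp$-basis of $\Gamma_s=E(R_{s;0},\dots,R_{s;s-1})\otimes\fp[q_{s,0}^{\pm1},q_{s,1},\dots,q_{s,s-1}]$, so that distinct elements of $\mathscr{C}$ remain linearly independent in $(\Gamma^+M)_s$. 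Hence $\mathscr{C}$ is an $\fp$-basis of $\mathscr{R}_sM$.

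The part that will need care is the bookkeeping in the middle step: one has to check that the three sources of constraint --- the ranges $j_k\geq0$ for $k\geq2$, the $L_s$-exponents $p-2+(p-1)[\frac{k-1}{2}]$ (respectively $p-2+(p-1)[\frac{k}{2}]$ after absorbing $L_s^{\frac{p-1}{2}}$), and the parities of $|m|$ and of $\sigma_1+\cdots+\sigma_s$ --- all conspire to yield the single uniform inequality $2j_1+\sigma_1+\cdots+\sigma_s\geq|m|$, and that the pervasive sign ambiguities and the passage from $D[s]$-module bases to genuine $\fp$-bases do not cause any loss.
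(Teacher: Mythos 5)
Your proposal is correct, and the reasoning in the middle bookkeeping step holds up: the parity trick — that $|m|$, $\sigma_1+\cdots+\sigma_s$ and $2j_1$ must all be summed to an integer of the same parity as $\sigma_1+\cdots+\sigma_s$ — really does collapse the separately-derived lower bounds on $j_1$ (via $\lfloor k/2\rfloor$ in the even case, and via $\lfloor k/2\rfloor-(k-1)$ plus the absorbed $L_s^{(p-1)/2}$ in the odd case) into the single uniform inequality $2j_1+\sigma_1+\cdots+\sigma_s\geq|m|$.

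The route is, however, genuinely different from the paper's in one organizational respect. The paper splits the argument into two inclusions: it first verifies by a direct case analysis that each element of $\mathscr{C}$ lies in $\mathscr{R}_sM$, and then for the reverse inclusion (and for linear independence) it invokes Propositions 3.4 and 3.7 of the first author's earlier paper \cite{Chon2016}, which already give the monomial $\fp$-bases of $\mathscr{B}[s]$ and $\mathcal{B}[s]$ in terms of $R_{s;i}$'s and $q_{s,i}$'s with the constraint $2j_1+\sigma_1+\cdots+\sigma_s\geq0$. Your proof instead re-derives exactly those monomial bases on the fly, directly from the M\`ui–Zarati free $D[s]$-module presentation (Proposition \ref{pro:B[s]}) together with the relations in Theorem \ref{thm:invariant of G}(3) and the identity $L_s^{p-1}=q_{s,0}$; multiplying the result by $St_s(m)$ then produces $\mathscr{C}$ with the shifted constraint in a single pass, so containment and spanning come out simultaneously. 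What this buys is self-containment: the reader need not consult \cite{Chon2016}. What it costs is that you have to redo the careful exponent/parity bookkeeping that \cite{Chon2016} already packaged. For linear independence the paper again cites \cite{Chon2016}, whereas you appeal to the isomorphism $\nu^M$ onto $\Lambda^\#\otimes M$ and to the monomial structure of $\Gamma_s=E(R_{s;0},\dots,R_{s;s-1})\otimes\fp[q_{s,0}^{\pm1},q_{s,1},\dots,q_{s,s-1}]$; this works, though strictly one should note that $\nu_s^M$ is defined on $(\Gamma^+M)_s$ via expressions $vS_s(m)$ with $v\in\Gamma_s^+$, and that a monomial $R_{s;0}^{\sigma_1}q_{s,0}^{j_1}\cdots q_{s,s-1}^{j_s}$ with $j_1<0$ is not literally in $\Gamma_s^+$ — one must first pass to the $u_i,v_i$-expansion of the whole product $vS_s(m)$ (which you have already shown lies in $(\Gamma^+M)_s$) before applying $\nu^M$ and comparing leading terms, in the spirit of Lemma \ref{lm:express Mui in uv}. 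That is a small gap in exposition rather than a gap in the mathematics, and it is handled implicitly by the reference the paper uses; since you are avoiding that reference, it deserves a sentence.
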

\begin{proof}
First, we show that $\mathscr{C}$ is contained in $\mathscr{R}_sM$. 

Let $q=R_{s;0}^{\sigma_1}q_{s,0}^{j_1}\cdots R_{s;s-1}^{\sigma_s}q_{s,s-1}^{j_s}S_s(m)$ be any element satisfying the condition of the proposition, and let $\sigma_{i_1+1},\dots,\sigma_{i_k+1}$ be the set of all non-zero exponents of $R_{s;*}$s in $q$ for $0\leq i_1<\cdots<i_k\leq s-1$. 

For $k=0$ and $m\in M^{2n+\delta}$,
\[
q=\left\{
\begin{array}{ll}
\pm q_{s,0}^{j_1-n}\cdots q_{s,s-1}^{j_s}St_s(m)&\text{ if }\delta=0,\\
\pm q_{s,0}^{j_1-n-1}\cdots q_{s,s-1}^{j_s}L_s^{\frac{p-1}{2}}St_s(m)&\text{ if }\delta=1.
\end{array}
\right.
\]
Since $2j_1\geq 2n+\delta$, it implies that $q\in \mathscr{R}_sM$.

For $k>0$, using the statement (3) of Theorem \ref{thm:invariant of G}, for $m\in M^{2n+\delta}$,
\begin{align*}
q&=\pm R_{s;i_1,\cdots,i_k}q_{s,0}^{j_1+k-1}\cdots q_{s,s-1}^{j_s}S_s(m)\\
&=\pm R_{s;i_1,\cdots,i_k}q_{s,0}^{j_1+k-1-n}\cdots q_{s,s-1}^{j_s}\frac{1}{L_s^{\frac{p-1}{2}\delta}}St_s(m).
\end{align*}

If $\delta=0$, then $q=\pm q_{s,0}^{j_1+k-1-[\frac{k-1}{2}]-n}\cdots q_{s,s-1}^{j_s}R_{s;i_1,\cdots,i_k}q_{s,0}^{[\frac{k-1}{2}]} St_s(m)$. Since $2j_1+k\geq 2n$, then $j_1+k-1-[\frac{k-1}{2}]-n\geq0$. It follows that $q\in \mathscr{R}_sM$.

If $\delta=1$, then $q=\pm q_{s,0}^{j_1+k-1-[\frac{k}{2}]-n}\cdots q_{s,s-1}^{j_s}M_{s;i_1,\cdots,i_k}L_s^{\frac{p-3}{2}}q_{s,0}^{[\frac{k}{2}]} St_s(m)$. Since $2j_1+k\geq 2n+1$, then $j_1+k-1-[\frac{k}{2}]-n\geq0$. It implies that $q\in\mathscr{R}_sM$.

From the definition, for any $x\in \mathscr{R}_sM$, it can be written by $x=\lambda St_s(m)$ for some $m$, where $\lambda\in\mathscr{B}[s]$ if $|m|=2n$ and $\lambda\in\mathcal{B}[s]$ if $|m|=2n+1$. 

By the definition of $St_s(m)$, then if $|m|=2n$, then $x$ can be written by $x=(-1)^{sn}\lambda q_{s,0}^{n}S_s(m)$, where $\lambda\in\mathscr{B}[s]$.

On the other hand, by result of Chơn \cite[Proposition 3.4]{Chon2016}, $\mathscr{B}[s]$ has an $\fp$-basis consisting of all elements
\[
R_{s;0}^{\sigma_1}q_{s,0}^{j_1}\cdots R_{s;s-1}^{\sigma_s}q_{s,s-1}^{j_s},
\]
for $\sigma_k\in\{0,1\}, j_1\in\mathbb{Z},j_k\geq0, 2\leq k\leq s$ and $2j_1+\sigma_1+\cdots+\sigma_s\geq0$. 
Therefore, $x$ can be written as a linear combination of elements of $\mathscr{C}$.

Otherwise, if $|m|=2n+1$, then $x$ can be written by $x=(-1)^{sn}\lambda L_s^{\frac{p-1}{2}(2n+1)}S_s(m)$, where $\lambda$ is a sum of $f_iQ_i$ with $f_i\in D[s]$ and $Q_i=L_s^{\frac{p-1}{2}}$ or $Q_i=M_{s;i_1,\dots,i_k}L_s^{\frac{p-3}{2}+(p-1)[\frac{k}{2}]}$.

If $Q_i=L_s^{\frac{p-1}{2}}$, then $f_iQ_iL_s^{\frac{p-1}{2}(2n+1)}=f_iq_{s,0}^{n+1}$. Therefore, $f_iQ_iS_s(m)$ can be expressed as a linear combination of the needed form.

If $Q_i=M_{s;i_1,\dots,i_k}L_s^{\frac{p-3}{2}+(p-1)[\frac{k}{2}]}$, then
\[
f_iQ_iL_s^{\frac{p-1}{2}(2n+1)}=f_iR_{s;i_1,\dots,i_k}q_{s;0}^{n+[\frac{k}{2}]}.
\]

Hence, by Proposition 3.7 in \cite{Chon2016}, $f_iQ_iS_s(m)$ can be also expressed as a linear combination of the elements in $\mathscr{C}$.

Thus, $\mathscr{C}$ is a set of generators of $\mathscr{R}_sM$ as an $\fp$-vector space.

Also by the Chơn's result \cite{Chon2016}, it is easy to verify that the set $\mathscr{C}$ is linear independent.

The proof is complete.
\end{proof}

Hence, give an unstable $A$-module $M$, the Singer functor $\mathscr{R}_sM$ is an $A$-submodule of $\mathscr{B}[s]\cdot S_s(M)$. Therefore, in dual, $(\mathscr{R}_sM)^\#$ is isomorphic to a quotient of $R_s\otimes M^\#$. In order to define the structure of $(\mathscr{R}_sM)^\#$, we need the following results.

Fix a non-negative integer $s$, for any non-negative integer $n$, let $\mathscr{I}_n$ be the set of all admissible string $I=(\epsilon_1,i_1,\dots,\epsilon_s,i_s)$ satisfying $e(I)\geq n$; and let $\mathscr{J}_n$ be the set of all string $J=(\sigma_1,j_1,\dots,\sigma_s,j_s)$ satisfying the condition $\sigma_k\in\{0,1\}, 1\leq k\leq s, j_1\in \mathbb{Z}, j_k\geq0, 2\leq k\leq s$ and $2j_1+\sigma_1+\cdots+\sigma_s\geq n$.

Obviously, we obtain that
\begin{lemma}\label{lm:phi_n}
The map $\phi_n:\mathscr{J}_n\to\mathscr{I}_n$ given by $\phi_n(J)=I$ where $\epsilon_k=\sigma_k$ and
\begin{equation*}
i_k=p^{s-k}(j_1+\sigma_1+\cdots+j_k+\sigma_k)+\sum_{t=0}^{s-k-1}(p^{s-k}-p^t)(j_{k+t+1}+\sigma_{k+t+1}),
\end{equation*}
for $1\leq k\leq s$, is a bijection.
\end{lemma}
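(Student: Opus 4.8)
The plan is to produce an explicit inverse $\psi_n\colon\mathscr{I}_n\to\mathscr{J}_n$ and then check that $\phi_n$ and $\psi_n$ are mutually inverse by elementary manipulation. Since $\phi_n$ leaves the exterior exponents unchanged ($\epsilon_k=\sigma_k$), everything takes place in the integer coordinates.

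First I would rewrite the defining formula in a more tractable form. Setting $a_\ell:=j_\ell+\sigma_\ell$ for $1\le\ell\le s$, a short rearrangement brings the formula for $I=\phi_n(J)$ to the shape
\[
i_k=p^{\,s-k}\sum_{\ell=1}^{s}a_\ell-\sum_{\ell=k+1}^{s}p^{\,\ell-k-1}a_\ell,\qquad 1\le k\le s,
\]
so that $i_s=\sum_{\ell=1}^{s}a_\ell$ and $i_{k-1}=p\,i_k-a_k$ for $2\le k\le s$. The recursion $i_{k-1}=p\,i_k-a_k$ says exactly $j_k=p\,i_k-\epsilon_k-i_{k-1}$, and summing it (using $i_s=\sum a_\ell$) gives $a_1=i_1-(p-1)(i_2+\cdots+i_s)$, hence $2j_1+\sigma_1+\cdots+\sigma_s=2i_1-\epsilon_1-2(p-1)(i_2+\cdots+i_s)+(\epsilon_2+\cdots+\epsilon_s)=e(I)$. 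These two identities carry the whole statement: the admissibility inequalities $p\,i_k-\epsilon_k\ge i_{k-1}$ of $I$ are precisely $j_k\ge0$ for $2\le k\le s$, and the excess bound $e(I)\ge n$ is precisely $2j_1+\sigma_1+\cdots+\sigma_s\ge n$.

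Guided by this, I would define $\psi_n$ by $\sigma_k:=\epsilon_k$, $j_k:=p\,i_k-\epsilon_k-i_{k-1}$ for $2\le k\le s$, and $j_1:=i_1-(p-1)(i_2+\cdots+i_s)-\epsilon_1$. The displayed identities then show $\psi_n(\mathscr{I}_n)\subseteq\mathscr{J}_n$ and, symmetrically, that $\phi_n(J)$ is an admissible string of excess $\ge n$. That $\phi_n$ and $\psi_n$ are mutually inverse follows by a downward induction on $k$: the recursion $i_{k-1}=p\,i_k-a_k$ determines $(i_1,\dots,i_s)$ from $i_s$ and the $a_k$, and conversely recovers the $a_k$ (hence the $j_k$) from the $i_k$, so no information is lost in either direction. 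The only step beyond pure bookkeeping is verifying that $\phi_n(J)$ satisfies the positivity constraints $i_k\ge\epsilon_k$ implicit in the notion of an admissible string; for this I would use the bound $i_k\ge p^{\,s-k-1}\bigl((p-1)i_s+a_1\bigr)$ (for $k<s$) read off from the displayed formula, together with $i_s\ge|a_1|$, which is forced by $2a_1+\sum_{\ell\ge2}a_\ell\ge n\ge0$ and $a_\ell\ge0$ $(\ell\ge2)$, to conclude $(p-1)i_s+a_1\ge0$ and, off the trivial all-zero string, $(p-1)i_s+a_1\ge1$; a short separate inspection at $k=s$ and, where $\epsilon_k=1$, the strict inequality finish this. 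I expect this positivity estimate to be the only mildly delicate point — the remainder being exactly the routine substitution that presumably led the authors to call the statement obvious.
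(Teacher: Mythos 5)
The paper offers no argument for this lemma --- it is asserted as obvious --- so there is no authors' route to compare against; your proof is correct. The rewriting $a_\ell:=j_\ell+\sigma_\ell$, $i_s=\sum_\ell a_\ell$, $i_{k-1}=p\,i_k-a_k$ is exactly the right mechanism: it turns $j_k\ge0$ ($k\ge2$) into the admissibility inequality $p\,i_k-\epsilon_k\ge i_{k-1}$ and turns $2j_1+\sum_\ell\sigma_\ell\ge n$ into $e(I)\ge n$, and inverting the recursion defines $\psi_n$. You have also rightly singled out the positivity constraint $i_k\ge\epsilon_k$ as the one point that is not a formal substitution, and your estimate closes it: $i_s-a_1=\sum_{\ell\ge2}a_\ell\ge0$ and $i_s+a_1=(2j_1+\sum_\ell\sigma_\ell)+\sigma_1+\sum_{\ell\ge2}j_\ell\ge n\ge0$ give $i_s\ge|a_1|$, hence $(p-1)i_s+a_1\ge(p-2)i_s\ge0$; moreover $i_s=0$ forces the all-zero string (where all $\epsilon_k=0$), and otherwise $i_s\ge1$, so since $p\ge3$ one has $(p-1)i_s+a_1\ge p-2\ge1$, whence $i_k\ge p^{s-k-1}\ge1\ge\epsilon_k$ for $k<s$ and $i_s\ge1\ge\epsilon_s$. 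This is a complete proof of a statement the paper left to the reader.
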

Basing on the result of Hưng-Sum \cite{Hung.Sum1995}, we get that
\begin{lemma}\label{lm:express Mui in uv} Given an unstable $A$-module $M$, for $m\in M$, let $(\sigma_1,j_1,\dots,\sigma_s,j_s)\in\mathscr{J}_{|m|}$ and $(\epsilon_1,i_1,\dots,\epsilon_s,i_s)=\phi_{|m|}(\sigma_1,j_1,\dots,\sigma_s,j_s)$. Then,
\begin{multline*}
R_{s;0}^{\sigma_1}q_{s,0}^{j_1}\cdots R_{s;s-1}^{\sigma_s}q_{s,s-1}^{j_s}S_s(m)=u_1^{\epsilon_1}v_1^{(p-1)i_s-\epsilon_1}\cdots u_s^{\epsilon_s}v_s^{(p-1)i_s-\epsilon_s}S_s(m)\\+\text{smaller monomials in the lexicographical order}.
\end{multline*}
\end{lemma}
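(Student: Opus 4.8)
The plan is to reduce the statement to a leading-term computation for the invariant $\rho_J:=R_{s;0}^{\sigma_1}q_{s,0}^{j_1}\cdots R_{s;s-1}^{\sigma_s}q_{s,s-1}^{j_s}\in\Gamma_s^+$, regarded inside $\Delta_s=E(u_1,\dots,u_s)\otimes\fp[v_1^{\pm1},\dots,v_s^{\pm1}]$. The factor $S_s(m)$ only rides along: the map $v\mapsto vS_s(m)$ is injective on $\Gamma_s^+$ (it agrees, up to the invertible monomial $L_s^{\frac{p-1}{2}|m|}$, with $v\mapsto vSt_s(m)$, which $\nu^M$ identifies with $v\mapsto\nu_s(v)\otimes m$), and $\nu_s$ carries $u_1^{\epsilon_1}v_1^{(p-1)i_1-\epsilon_1}\cdots u_s^{\epsilon_s}v_s^{(p-1)i_s-\epsilon_s}$ to a scalar multiple of $(\lambda^{\epsilon_1}_{i_1-1}\cdots\lambda^{\epsilon_s}_{i_s-1})^*$ and preserves the lexicographic order. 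So it suffices to prove that, in the monomial basis of $\Delta_s$, the expansion of $\rho_J$ has as its largest term exactly $u_1^{\sigma_1}v_1^{(p-1)i_1-\sigma_1}\cdots u_s^{\sigma_s}v_s^{(p-1)i_s-\sigma_s}$, where $(\epsilon_1,i_1,\dots,\epsilon_s,i_s)=\phi_{|m|}(\sigma_1,j_1,\dots,\sigma_s,j_s)$ (the lexicographic order comparing exponent words from the last variable inward).

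A lexicographic term order being multiplicative, the leading term of $\rho_J$ is the product of the leading terms of the generators occurring in it, provided the exterior ($u$-)supports of those leading terms are pairwise disjoint. I would establish the single-generator leading terms by induction on $s$ from the identities in the excerpt. From $v_s=V_s/q_{s-1,0}$, $L_s=V_sL_{s-1}$ and \eqref{eq:q_s,i in V_i} one gets $q_{s,0}=v_s^{p-1}q_{s-1,0}^{p}$ (a single monomial) and, for $1\le i\le s-1$,
\[
q_{s,i}=q_{s-1,i-1}^{p}+v_s^{p-1}q_{s-1,0}^{p-1}q_{s-1,i};
\]
the second summand carries the strictly larger power of $v_s$, so the leading term of $q_{s,i}$ is $v_s^{p-1}q_{s-1,0}^{p-1}$ times that of $q_{s-1,i}$. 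Likewise, from Mùi's recursion $M_{s;i}=M_{s-1;i}V_s+q_{s-1,i}M_{s;s-1}$, the relation $R_{s;i}=M_{s;i}L_s^{p-2}$ and $M_{s;s-1}=u_sL_{s-1}$, one finds that the leading term of $R_{s;s-1}$ is $u_sv_s^{p-2}q_{s-1,0}^{p-2}L_{s-1}^{p-1}$ and, for $i\le s-2$, that of $R_{s;i}$ is $v_s^{p-1}q_{s-1,0}^{p-1}$ times the leading term of $R_{s-1;i}$; in particular the unique exterior variable occurring in the leading term of $R_{s;i}$ is $u_{i+1}$, so the $u$-supports attached to the distinct $R$-factors singled out by the $\sigma_\ell$ are pairwise disjoint and their product is nonzero. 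Products of several $R_{s;i}$ inside $\rho_J$ are first normalised through Theorem \ref{thm:invariant of G}(3), $R_{s;i_1}\cdots R_{s;i_k}=(-1)^{k(k-1)/2}R_{s;i_1,\dots,i_k}q_{s,0}^{k-1}$, and then handled by the same scheme.

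Multiplying these leading terms and summing, level by level in the recursion, the contributions to the exponent of each $v_k$ reproduces the closed formula of Lemma \ref{lm:phi_n}: the exponent of $v_k$ is $(p-1)i_k-\epsilon_k$ with $i_k$ equal to $p^{s-k}(j_1+\sigma_1+\cdots+j_k+\sigma_k)$ plus the geometric-series corrections $(p^{s-k}-p^t)(j_{k+t+1}+\sigma_{k+t+1})$ for $0\le t\le s-k-1$, while the exponent of $u_k$ is $\epsilon_k=\sigma_k$. One checks directly that the resulting string is admissible --- indeed $p i_k-i_{k-1}=j_k+\sigma_k\ge\epsilon_k$ --- so that it lies in $\mathscr{I}_{|m|}$ exactly when $(\sigma_1,j_1,\dots,\sigma_s,j_s)\in\mathscr{J}_{|m|}$; the bijectivity of $\phi_{|m|}$ recorded in Lemma \ref{lm:phi_n} then also shows that distinct elements of the basis $\mathscr{C}$ of Proposition \ref{pro:additive basis of R_sM} carry distinct leading monomials. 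The step I expect to be most delicate is the exterior bookkeeping in the second paragraph: tracking, through the intertwined recursions for $M_{s;i}$ and the multi-index $M_{s;i_1,\dots,i_k}$ together with the stray powers of $L_s$, $L_{s-1}$ and $q_{s,0}$ that the product relations introduce, exactly which $u_k$ appears in each leading monomial and with what companion power of $v_k$, and checking that the sign conventions of \cite{Hung.Sum1995} leave the leading coefficient equal to $1$. Once the single-generator leading terms are pinned down --- and these may instead be quoted from \cite{Hung.Sum1995} --- the rest of the assembly is mechanical, if lengthy.
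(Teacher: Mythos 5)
The paper gives no proof of Lemma~\ref{lm:express Mui in uv}: it is stated after the phrase ``Basing on the result of H\horn{u}ng-Sum \cite{Hung.Sum1995}, we get that,'' so the statement is quoted, not argued. Your proposal therefore supplies an argument where the paper supplies none, and your argument is essentially correct.

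Your strategy --- strip off $S_s(m)$, determine the lexicographically leading monomial of $\rho_J=R_{s;0}^{\sigma_1}q_{s,0}^{j_1}\cdots R_{s;s-1}^{\sigma_s}q_{s,s-1}^{j_s}$ in $\Delta_s$ as the product of leading terms of its factors, and obtain those by an $s$-descent through the recursions $q_{s,i}=q_{s-1,i-1}^p+q_{s-1,i}V_s^{p-1}$, $M_{s;i}=M_{s-1;i}V_s+q_{s-1,i}M_{s;s-1}$ with $V_s=q_{s-1,0}v_s$, $L_s=V_sL_{s-1}$, $M_{s;s-1}=u_sL_{s-1}$ --- is sound, and the formulas you record (e.g.\ $q_{s,0}=q_{s-1,0}^pv_s^{p-1}$ a monomial, $L(R_{s;i})=v_s^{p-1}q_{s-1,0}^{p-1}L(R_{s-1;i})$ for $i\le s-2$, $R_{s;s-1}=u_sv_s^{p-2}q_{s-1,0}^{p-2}L_{s-1}^{p-1}$ a monomial) check out; so does the identity $pi_k-i_{k-1}=j_k+\sigma_k$ which gives admissibility. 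The caveat you flag about disjoint $u$-supports is exactly right and suffices: with the lexicographic order that first compares the $(u_s,v_s)$-exponents, then $(u_{s-1},v_{s-1})$, and so on, one has $L(ab)=L(a)L(b)$ in $\Delta_s$ whenever $L(a)L(b)\ne0$, and the $R_{s;\ell-1}$ with $\sigma_\ell=1$ have pairwise distinct exterior variables $u_\ell$ in their leading terms, appearing in increasing order so the leading coefficient is $+1$. Two small remarks. First, the detour through Theorem~\ref{thm:invariant of G}(3), rewriting $R_{s;i_1}\cdots R_{s;i_k}$ as $\pm R_{s;i_1,\dots,i_k}q_{s,0}^{k-1}$, is unnecessary and in fact harder, since the multi-index $M_{s;i_1,\dots,i_k}$ does not satisfy a recursion stated here; it is simpler to multiply the single-generator leading terms directly once the $u$-supports are known to be disjoint. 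Second, you should state explicitly that the order is read from the index $s$ downward (your ``from the last variable inward'' leaves room for ambiguity), since with the opposite order, e.g.\ comparing $v_1$-exponents first, the leading term of $q_{s,i}$ would instead be $q_{s-1,i-1}^p$ and the conclusion would fail. Also note the displayed formula in the statement has a typo, $v_1^{(p-1)i_s-\epsilon_1}$ for $v_1^{(p-1)i_1-\epsilon_1}$, which your proof correctly reads around.
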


We identify $\mathscr{R}_sM$ with its image in $R_s^\#\otimes M$ via the map $\nu_s^M$. By the identification, the dual $(\mathscr{R}_sM)^\#$ can be considered as a quotient right $A$-module of $R_s\otimes M^\#$. 

\begin{proposition}\label{pro:dual of Singer functor}
Given an unstable $A$-module $M$, the set 
\[
\mathscr{S}=\left\{Q^I\otimes \ell=\beta^{\epsilon_s}Q^{i_1}\cdots\beta^{\epsilon_s}Q^{i_s}\otimes \ell:\ell\in M^\#,I\in \mathscr{I}_{|\ell|}\right\}
\]
 represents an $\fp$-basis of $(\mathscr{R}_sM)^\#$.
\end{proposition}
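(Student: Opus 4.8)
The strategy is to dualize Proposition \ref{pro:additive basis of R_sM} and track the resulting basis through the identification of $\mathscr{R}_sM$ with a submodule of $R_s^\#\otimes M$ furnished by $\nu_s^M$. By Proposition \ref{pro:additive basis of R_sM}, the set $\mathscr{C}=\{R_{s;0}^{\sigma_1}q_{s,0}^{j_1}\cdots R_{s;s-1}^{\sigma_s}q_{s,s-1}^{j_s}S_s(m)\}$, indexed by $m$ running over an $\fp$-basis of $M$ and by $J=(\sigma_1,j_1,\dots,\sigma_s,j_s)\in\mathscr{J}_{|m|}$, is an $\fp$-basis of $\mathscr{R}_sM$. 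Thus the dual $(\mathscr{R}_sM)^\#$ has the dual basis $\{(R_{s;0}^{\sigma_1}q_{s,0}^{j_1}\cdots q_{s,s-1}^{j_s}S_s(m))^\ast\}$, indexed by the same data (with $m$ now running over the dual basis $\ell\in M^\#$, using $|\ell|=|m|$). So it suffices to exhibit a change of basis matching this dual basis, up to invertible triangular transformation, with the proposed set $\mathscr{S}$.

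First I would fix $m\in M$ and restrict attention to the finitely many basis elements of $\mathscr{R}_sM$ of the form $(\text{monomial})\cdot S_s(m)$; since $S_s$ is $\fp$-linear and injective on basis elements of $M$, the full basis of $\mathscr{R}_sM$ decomposes as a direct sum over $m$, and dually $(\mathscr{R}_sM)^\#$ decomposes accordingly. Within the $m$-block, Lemma \ref{lm:express Mui in uv} says precisely that, writing $I=\phi_{|m|}(J)$, the element $R_{s;0}^{\sigma_1}q_{s,0}^{j_1}\cdots q_{s,s-1}^{j_s}S_s(m)$ equals $u_1^{\epsilon_1}v_1^{(p-1)i_1-\epsilon_1}\cdots u_s^{\epsilon_s}v_s^{(p-1)i_s-\epsilon_s}S_s(m)$ plus strictly smaller monomials in lexicographic order. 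By Lemma \ref{lm:phi_n}, $\phi_{|m|}:\mathscr{J}_{|m|}\to\mathscr{I}_{|m|}$ is a bijection, and the $u$–$v$ monomials indexed by $\mathscr{I}_{|m|}$ are themselves linearly independent inside $\Gamma_s^+ S_s(M)$ (this is the content of Proposition \ref{pro:expan R_sM in Gamma M} together with the fact that distinct admissible $u$–$v$ monomials are linearly independent). Hence the transition matrix from $\mathscr{C}$ (within the $m$-block) to the $u$–$v$ monomial set $\{u_1^{\epsilon_1}v_1^{(p-1)i_1-\epsilon_1}\cdots v_s^{(p-1)i_s-\epsilon_s}S_s(m):I\in\mathscr{I}_{|m|}\}$ is unitriangular with respect to the lexicographic order, hence invertible. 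Consequently that $u$–$v$ monomial set is also an $\fp$-basis of $\mathscr{R}_sM$, and its dual basis is $\{(\lambda_{i_1-1}^{\epsilon_1}\cdots\lambda_{i_s-1}^{\epsilon_s})^\ast\otimes\ell\}$ up to signs, once we push through $\nu_s^M$ — which, by the formula $\nu_s^M(vSt_s(m))=\nu_s(v)\otimes m$ and the explicit sign in the definition of $\nu_s$, sends the $u$–$v$ monomial to $\pm(\lambda_{i_1-1}^{\epsilon_1}\cdots\lambda_{i_s-1}^{\epsilon_s})^\ast$. Dualizing $\nu_s$ on $R_s$ (its restriction to $\mathscr{B}[s]$ is an isomorphism onto $R_s^\#$, as noted after the Nishida-relation discussion), the dual of $(\lambda_I^\ast)$ corresponds precisely to $Q^I=\beta^{\epsilon_1}Q^{i_1}\cdots\beta^{\epsilon_s}Q^{i_s}$, and the condition $I\in\mathscr{I}_{|\ell|}$, i.e.\ $e(I)\ge|\ell|$, is exactly the admissibility-plus-excess constraint defining $\mathscr{S}$. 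This identifies $\mathscr{S}$ with a basis of $(\mathscr{R}_sM)^\#$.

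The main obstacle I anticipate is bookkeeping rather than conceptual: one must check that the lexicographic triangularity of Lemma \ref{lm:express Mui in uv} is compatible across the whole index set $\mathscr{I}_{|m|}$ (not merely pairwise), i.e.\ that the "smaller monomials" appearing as error terms are themselves indexed by elements of $\mathscr{I}_{|m|}$ and do not escape the span of $\mathscr{C}$; this follows because $\mathscr{R}_sM$ is closed (it is the span of $\mathscr{C}$ by Proposition \ref{pro:additive basis of R_sM}) and every $u$–$v$ monomial appearing is of the form permitted by Proposition \ref{pro:expan R_sM in Gamma M}, so $e(I)\ge|m|$ throughout. The second delicate point is the sign comparison in $\nu_s$ and in the duality $R_s\cong R_s^{\#\#}$: I would absorb all signs into the statement "up to sign" (which is all the proposition claims, since it asserts only that $\mathscr{S}$ represents an $\fp$-basis), so no sign needs to be pinned down precisely. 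Finally, linear independence of $\mathscr{S}$ is automatic once it is shown to be the image of a basis under an isomorphism, but as a sanity check it also follows directly from the linear independence half of Proposition \ref{pro:additive basis of R_sM} via the same unitriangular transition matrix.
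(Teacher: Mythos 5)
Your proposal follows the same route as the paper's own proof: Proposition \ref{pro:additive basis of R_sM} furnishes the basis $\mathscr{C}$ of $\mathscr{R}_sM$, Lemma \ref{lm:express Mui in uv} furnishes the lexicographic triangularity, and Lemma \ref{lm:phi_n} furnishes the cardinality match. The paper simply states the same two steps more tersely --- linear independence of the images $[Q^I\otimes\ell]$ from Lemma \ref{lm:express Mui in uv}, then a dimension count from Lemma \ref{lm:phi_n} --- whereas you unfold it into an explicit change-of-basis picture.

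Two imprecisions are worth flagging, neither of which derails the conclusion. First, the sentence ``Consequently that $u$--$v$ monomial set is also an $\fp$-basis of $\mathscr{R}_sM$'' is not literally correct: the individual $u$--$v$ monomials of $\Delta_s^+$ are not $GL_s$-invariant and so do not lie in $\Gamma_s^+$, let alone in $\mathscr{R}_sM$. What the triangularity of Lemma \ref{lm:express Mui in uv} really delivers is that the pairing matrix between $\mathscr{C}$ and $\{Q^I\otimes\ell:I\in\mathscr{I}_{|\ell|}\}$ is unitriangular (via the bijection $\phi_{|m|}$ of Lemma \ref{lm:phi_n}), which is what forces the images $[Q^I\otimes\ell]$ to be linearly independent in the quotient $(\mathscr{R}_sM)^\#$; that is exactly the paper's first step, without any need to manufacture a new $u$--$v$ basis of $\mathscr{R}_sM$ itself. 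Second, your defense that the smaller error terms ``do not escape the span'' by invoking Proposition \ref{pro:expan R_sM in Gamma M} only covers the excess bound $e(I)\ge|m|$; the index set $\mathscr{I}_{|m|}$ additionally demands admissibility, and the error terms in Lemma \ref{lm:express Mui in uv} can well be carried by non-admissible strings. This would be a genuine gap if the goal were to prove that the admissibly-indexed $u$--$v$ monomials span $\mathscr{R}_sM$; for the dual statement it is harmless, since pairing $Q^I\otimes\ell$ with admissible $I$ against a $\mathscr{C}$-element reads off only the coefficient of the $u$--$v$ monomial carrying that same admissible index, which is governed by the unique leading term. With those two points repaired, your argument coincides with the paper's.
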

\begin{proof}
By Lemma \ref{lm:express Mui in uv}, for each $\ell\in M^\#$, the set of all elements $$Q^I\otimes \ell=\beta^{\epsilon_s}Q^{i_1}\cdots\beta^{\epsilon_s}Q^{i_s}\otimes \ell$$ for $I\in \mathscr{I}_{|\ell|}$ represents a linear independent set of $(\mathscr{R}_sM)^\#$. Therefore, the set of all elements $Q^I\otimes \ell$ for $\ell\in M^\#$ and $I\in \mathscr{I}_{|\ell|}$ represents a linear independent set.

From Lemma \ref{lm:phi_n}, the number of elements of this set is equal to the dimension of $\mathscr{R}_sM$.

The proof is complete.
\end{proof}

The right $A$-module structure of $(\mathscr{R}_sM)^\#$ is induced from the Cartan formula, the right $A$-module structure of $M^\#$ and the Nishida relations.

Observation from Proposition \ref{pro:expan R_sM in Gamma M} that the elements $Q^I\otimes \ell\in R_s\otimes M^\#$ of $e(I)<|\ell|$ represents a trivial element in $(\mathscr{R}_sM)^\#$. 

Taking dual Theorem \ref{thm:representation of varphi_s in Gamma}, we have the following result.
\begin{proposition}\label{pro:representation of vaphi_s in lambda}
For any unstable $A$-module $M$, the projection $$\widetilde{\varphi}_s^M:\Lambda_s\otimes M^\#\to (\mathscr{R}_sM)^\#$$ given by
\[
\lambda_I\otimes \ell\to (-1)^{\frac{(s-1)(s-2}{2}}[Q^I\otimes \ell]
\]
is a chain-level representation of the mod $p$ Lannes-Zarati homomorphism $\varphi_s^M$.
\end{proposition}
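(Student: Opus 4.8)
The plan is to obtain the statement by dualizing Theorem~\ref{thm:representation of varphi_s in Gamma} and transporting the resulting chain-level representation across the isomorphism of differential $\fp$-modules $\nu^M\colon\Gamma^+M\xrightarrow{\ \cong\ }\Lambda^\#\otimes M$. In each fixed internal degree all of $\Gamma^+M$, $\Lambda\otimes M^\#$, the Ext groups, and $(\fp\tsor{A}\mathscr{R}_sM)^\#$ are finite-dimensional over $\fp$, so $\fp$-linear dualization is exact and commutes with passage to (co)homology; hence the $\fp$-linear dual of the chain-level representation $(\widetilde\varphi_s^M)^\#\colon\mathscr{R}_sM\to(\Gamma^+M)_s$ of $(\varphi_s^M)^\#$ is a chain-level representation of $\varphi_s^M=\big((\varphi_s^M)^\#\big)^\#$ in the cochain complex $(\Gamma^+M)^\#$. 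Dualizing $\nu^M$ and identifying $(\Lambda_s^\#\otimes M)^\#$ with $\Lambda_s\otimes M^\#$ degreewise yields an isomorphism $(\nu^M)^\#\colon\Lambda\otimes M^\#\xrightarrow{\ \cong\ }(\Gamma^+M)^\#$, and transporting along it gives a chain-level representation of $\varphi_s^M$ in $\Lambda\otimes M^\#$. It then only remains to check that this transported map equals the projection $\lambda_I\otimes\ell\mapsto(-1)^{(s-1)(s-2)/2}[Q^I\otimes\ell]$.

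First I would describe how $\nu^M$ meets the inclusion $\mathscr{R}_sM\hookrightarrow(\Gamma^+M)_s$. By Proposition~\ref{pro:expan R_sM in Gamma M}, every $\gamma\in\mathscr{R}_sM$ is an $\fp$-combination of the generators $u_1^{\epsilon_1}v_1^{(p-1)i_1-\epsilon_1}\cdots u_s^{\epsilon_s}v_s^{(p-1)i_s-\epsilon_s}S_s(m)$ of $(\Gamma^+M)_s$ with $I=(\epsilon_1,i_1,\dots,\epsilon_s,i_s)$ admissible and $e(I)\ge|m|\ge 0$; by the definition of $\nu_s$, such a generator goes to $(-1)^{\eta(I)}(\lambda_I)^{*}\otimes m$, where $\eta(I):=i_1+\cdots+i_s+\sum_{\ell<k}\epsilon_\ell\epsilon_k$, and $(\lambda_I)^{*}\in R_s^\#$ because $e(I)\ge 0$. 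Thus the restriction of $\nu_s^M$ to $\mathscr{R}_sM$ is exactly the embedding $\mathscr{R}_sM\hookrightarrow R_s^\#\otimes M$ (via $\nu_s^M$) along which $(\mathscr{R}_sM)^\#$ is realized as a quotient of $R_s\otimes M^\#$ in Proposition~\ref{pro:dual of Singer functor}; here I use the fact, recalled in Section~2, that $\nu_s$ restricts to an isomorphism $\mathscr{B}[s]\xrightarrow{\ \cong\ }R_s^\#$.

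The core of the argument is the sign bookkeeping. For a generator $u^JS_s(m)$ of $(\Gamma^+M)_s$, $\big\langle(\nu_s^M)^\#(\lambda_I\otimes\ell),\,u^JS_s(m)\big\rangle=\big\langle\lambda_I\otimes\ell,\,(-1)^{\eta(J)}(\lambda_J)^{*}\otimes m\big\rangle=(-1)^{\eta(I)}\,\delta_{I,J}\,\langle\ell,m\rangle$, so $(\nu_s^M)^\#(\lambda_I\otimes\ell)$ is the functional supported on the $u^I$-component with value $(-1)^{\eta(I)}\langle\ell,-\rangle$ on the coefficient in $M$. On the other hand, for $\gamma=\sum c_{J,m}\,u^JS_s(m)\in\mathscr{R}_sM$ one has, using the description of $\mathscr{R}_sM\subset R_s^\#\otimes M$ from the previous paragraph, $\big\langle Q^I\otimes\ell,\,\nu_s^M(\gamma)\big\rangle=(-1)^{\eta(I)}\sum_m c_{I,m}\langle\ell,m\rangle=\big\langle(\nu_s^M)^\#(\lambda_I\otimes\ell),\,\gamma\big\rangle$ for every such $\gamma$. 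Hence the two occurrences of $(-1)^{\eta(I)}$ cancel, the image of $(\nu_s^M)^\#(\lambda_I\otimes\ell)$ in $(\mathscr{R}_sM)^\#$ is exactly $[Q^I\otimes\ell]$, and composing with the scalar $(-1)^{(s-2)(s-1)/2}=(-1)^{(s-1)(s-2)/2}$ carried along from $(\widetilde\varphi_s^M)^\#$ produces the asserted formula. Well-definedness on all of $\Lambda_s\otimes M^\#$ is automatic: $Q^I=0$ in $R_s$ whenever $\lambda_I$ has negative excess, and $[Q^I\otimes\ell]=0$ whenever $e(I)<|\ell|$ by the observation following Proposition~\ref{pro:dual of Singer functor}.

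The main obstacle I anticipate is precisely this sign bookkeeping: the sign $(-1)^{\eta(I)}$ built into $\nu_s$ must be inserted in identical fashion when describing the embedding $\mathscr{R}_sM\hookrightarrow R_s^\#\otimes M$ and when computing $(\nu_s^M)^\#$, so that the two cancel and no spurious sign survives; a careless choice of dual-basis or graded-dualization conventions would break this. A secondary point deserving care is that the input ``the image of $(\widetilde\varphi_s^M)^\#$ lies in the cycles of $\Gamma^+M$'', implicit in Theorem~\ref{thm:representation of varphi_s in Gamma}, must be combined with the finite-type dualization statements, so that the transported map is genuinely a chain-level representation of $\varphi_s^M$ rather than merely a map of graded $\fp$-vector spaces.
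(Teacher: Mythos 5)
Your proposal is correct and takes essentially the same route as the paper, which simply asserts the proposition by ``Taking dual Theorem~\ref{thm:representation of varphi_s in Gamma}''; you spell out what that dualization means degreewise, transport it along $\nu^M$, and verify that the sign $(-1)^{\eta(I)}$ in $\nu_s$ appears identically on both sides of the pairing so that only the scalar $(-1)^{(s-1)(s-2)/2}$ from $(\widetilde\varphi_s^M)^\#$ survives, which is exactly what the stated formula requires.
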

\section{The power operations}
 This section is devoted to develop the power operations, these are useful tools to study the behavior of the Lannes-Zarati homomorphism in the next section.

From Liulevicius \cite{Liulevicius1960}, \cite{Liu62} and May \cite{May70}, there exists a power operation $\p^0:\Ext{A}{s,s+t}{\fp}{\fp}\to \Ext{A}{s,p(s+t)}{\fp}{\fp}$. Its chain-level representation in $\Lambda$ is given by
\[
\widetilde{\p}^0(\lambda^{\epsilon_1}_{i_1-1}\cdots\lambda^{\epsilon_s}_{i_s-1})=\left\{
\begin{array}{ll}
\lambda^{\epsilon_1}_{pi_1-1}\cdots\lambda^{\epsilon_s}_{pi_s-1},& \epsilon_1=\cdots=\epsilon_s=1,\\
0,& \text{otherwise}.
\end{array}
\right.
\]
\begin{lemma}\label{lm:power operation on R}
The operation $\widetilde{\p}^0$ induces an operation, which is also denoted by $\widetilde{\p}^0$, on the Dyer-Lashof algebra $R$ given by
\[
\widetilde{\p}^0(\beta^{\epsilon_1}Q^{i_1}\cdots\beta^{\epsilon_s}Q^{i_s})=\left\{
\begin{array}{ll}
\beta^{\epsilon_1}Q^{pi_1}\cdots\beta^{\epsilon_s}Q^{pi_s},& \epsilon_1=\cdots=\epsilon_s=1,\\
0,& \text{otherwise}.
\end{array}
\right.
\]
\end{lemma}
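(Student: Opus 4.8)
The plan is to obtain $\widetilde{\p}^0$ on $R$ as the operation induced from $\widetilde{\p}^0$ on $\Lambda$. Recall $R=\Lambda/\mathcal{N}$, where $\mathcal{N}$ is the two-sided ideal of $\Lambda$ generated by all monomials of negative excess, and write $\pi\colon\Lambda\twoheadrightarrow R$ for the canonical projection, so that $\pi(\lambda_I)=Q^I$. Since $\widetilde{\p}^0$ respects the Adem relations, it is already a well-defined ring endomorphism of $\Lambda$, namely the one determined on generators by $\lambda_{i-1}\mapsto\lambda_{pi-1}$ and $\mu_{i-1}\mapsto 0$. Hence the only thing left to verify in order to get the induced operation on $R$ is the inclusion $\widetilde{\p}^0(\mathcal{N})\subseteq\mathcal{N}$; once this is known, $\widetilde{\p}^0$ descends to $R$, and applying $\pi$ to the defining formula for $\widetilde{\p}^0$ on $\Lambda$ reads off exactly the asserted formula, namely $\widetilde{\p}^0(\beta^{\epsilon_1}Q^{i_1}\cdots\beta^{\epsilon_s}Q^{i_s})=\beta^{\epsilon_1}Q^{pi_1}\cdots\beta^{\epsilon_s}Q^{pi_s}$ if $\epsilon_1=\cdots=\epsilon_s=1$ and $0$ otherwise.

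First I would reduce $\widetilde{\p}^0(\mathcal{N})\subseteq\mathcal{N}$ to a statement about monomials: as $\widetilde{\p}^0$ is multiplicative and $\mathcal{N}$ is a two-sided ideal, it is enough to show that $\widetilde{\p}^0$ carries every monomial $\lambda_I=\lambda^{\epsilon_1}_{i_1-1}\cdots\lambda^{\epsilon_s}_{i_s-1}$ of negative excess into $\mathcal{N}$. If some $\epsilon_k=0$ this is automatic, since then $\widetilde{\p}^0(\lambda_I)=0$; so the essential case is $\epsilon_1=\cdots=\epsilon_s=1$, in which $\widetilde{\p}^0(\lambda_I)=\lambda_J$ with $J=(1,pi_1,\dots,1,pi_s)$.

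The heart of the argument is then a short computation with the excess. Substituting $J$ into the definition of $e(\,\cdot\,)$ and factoring a $p$ out of the part of the expression not involving $\sum\epsilon_k$ should yield an identity of the shape $e(\lambda_J)=p\,e(\lambda_I)-(p-1)(s-2)$. Consequently, for $s\geq2$ the negativity of $e(\lambda_I)$ forces $e(\lambda_J)\leq-p<0$, while for $s\leq1$ one checks by inspection that there is no monomial of negative excess at all with all exponents equal to $1$, so nothing is needed there. In every case $\widetilde{\p}^0(\lambda_I)\in\mathcal{N}$, which gives $\widetilde{\p}^0(\mathcal{N})\subseteq\mathcal{N}$ and hence the lemma. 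As a transparency check (not logically required) I would also note that admissibility $pi_k-1\geq i_{k-1}$ implies $p^2i_k-1\geq pi_{k-1}$, so that $\lambda_J$ is again an admissible monomial.

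The one place that takes care is the excess identity together with the degenerate lengths $s=0,1$; everything else is formal, provided one grants that $\widetilde{\p}^0$ is a genuine multiplicative operation on $\Lambda$, which is precisely the already-recorded fact that $\widetilde{\p}^0$ respects the Adem relations.
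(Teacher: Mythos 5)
Your proposal is correct and takes essentially the same route as the paper: both reduce to checking that $\widetilde{\p}^0$ maps monomial generators of negative excess into the ideal $\mathcal{N}$, note that the case $\epsilon_k=0$ for some $k$ is immediate, and then use the identity $e(\lambda^1_{pi_1-1}\cdots\lambda^1_{pi_s-1})=p\,e(\lambda^1_{i_1-1}\cdots\lambda^1_{i_s-1})-(p-1)(s-2)$ to conclude that negative excess is preserved when $s\geq2$. The paper leaves the reduction step and the $s\leq1$ degenerate cases implicit, whereas you spell them out; the content and the key computation are the same.
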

\begin{proof}
It is sufficient to show that if $\lambda^1_{i_1-1}\cdots\lambda^1_{i_s-1}$ has negative excess then so does $\lambda^1_{pi_1-1}\cdots\lambda^1_{pi_s-1}$ for $s\geq2$.

By inspection, one gets
\begin{align*}
e(\lambda^1_{pi_1-1}\cdots\lambda^1_{pi_s-1})&=2pi_1-\sum_{k=2}^s2p(p-1)i_k+(s-2)\\
&=pe(\lambda^1_{i_1-1}\cdots\lambda^1_{i_s-1})-(p-1)(s-2).
\end{align*}

Therefore, if $e(\lambda^1_{i_1-1}\cdots\lambda^1_{i_s-1})<0$ then $e(\lambda^1_{pi_1-1}\cdots\lambda^1_{pi_s-1})<0$.
\end{proof}
\begin{lemma}\label{lm:power operation on Ann(R)}
The operation $\widetilde{\p}^0$ commutes with the action of $A$. In particular,
\begin{equation}\label{eq:power operation on Ann(R)}
\widetilde{\p}^0((\beta^{\epsilon_1}Q^{i_1}\cdots\beta^{\epsilon_s}Q^{i_s})P^k)=(\widetilde{\p}^0(\beta^{\epsilon_1}Q^{i_1}\cdots\beta^{\epsilon_s}Q^{i_s}))P^{pk}.
\end{equation}
\end{lemma}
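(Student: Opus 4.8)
The plan is to verify the identity \eqref{eq:power operation on Ann(R)} directly from the Nishida relations, which (following May \cite{Coh-Lad-May76}) define the right $A$-action on $R$. Since $R$ is a right $A$-module algebra for the composition product and $\psi(P^k)=\sum_{i+j=k}P^i\otimes P^j$, iterating the length-one relation $(\beta^\epsilon Q^s)P^r=\pm\binom{(p-1)(s-r)-\epsilon}{r}\,\beta^\epsilon Q^{s-r}$ yields, for $\xi=\beta^{\epsilon_1}Q^{i_1}\cdots\beta^{\epsilon_s}Q^{i_s}$, an identity in $R_s$ of the form
\[
\xi P^k=\sum_{k_1+\cdots+k_s=k}\pm\Big(\prod_{j=1}^{s}\binom{(p-1)(i_j-k_j)-\epsilon_j}{k_j}\Big)\beta^{\epsilon_1}Q^{i_1-k_1}\cdots\beta^{\epsilon_s}Q^{i_s-k_s},
\]
where a displayed monomial is reduced by the Adem relations (resp.\ deleted) when it is inadmissible (resp.\ of negative excess). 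By Lemma \ref{lm:power operation on R}, $\widetilde{\p}^0$ respects the Adem relations and carries negative-excess monomials to negative-excess monomials, so it may be applied termwise to both sides of such an identity; this is the only way $\widetilde{\p}^0$ will enter.

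I would first dispose of the case in which some $\epsilon_j=0$. Then $\widetilde{\p}^0(\xi)=0$, so the right-hand side of \eqref{eq:power operation on Ann(R)} vanishes. On the other hand $P^k$ acts trivially on the exterior generators $x_i\in H^*BE_s$, so it preserves the exterior degree and hence the Bockstein length; dualizing the splitting of $\mathscr{B}[s]$ by exterior degree (M\`ui, Proposition \ref{pro:B[s]}), the right action of $P^k$ on $R_s\cong\mathscr{B}[s]^\#$ preserves the Bockstein pattern. Thus every monomial occurring in $\xi P^k$ still carries a vanishing Bockstein exponent, so $\widetilde{\p}^0(\xi P^k)=0$ as well, and \eqref{eq:power operation on Ann(R)} holds trivially.

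It remains to treat $\epsilon_1=\cdots=\epsilon_s=1$, where $\widetilde{\p}^0$ sends an all-Bockstein monomial to the one obtained by multiplying each lower index by $p$. Applying it to the expansion above, and separately expanding $\widetilde{\p}^0(\xi)P^{pk}=(\beta Q^{pi_1}\cdots\beta Q^{pi_s})P^{pk}$ by the same relation, I would match the two resulting sums term by term via Lucas's theorem, using two observations: (i) if some index $\ell_j$ of $P^{pk}$ is not divisible by $p$, say $\ell_j=pq+r$ with $1\leq r\leq p-1$, then $(p-1)(p i_j-\ell_j)-1\equiv r-1\pmod p$ has last base-$p$ digit $r-1<r$, whence $\binom{(p-1)(p i_j-\ell_j)-1}{\ell_j}\equiv 0$, so that summand drops out; and (ii) for the surviving summands, with $\ell_j=p k_j$ and $\sum k_j=k$, one has $\binom{(p-1)(p i_j-p k_j)-1}{p k_j}\equiv\binom{(p-1)(i_j-k_j)-1}{k_j}\pmod p$ (Lucas, since $pm-1\equiv p-1$ in the last digit), while $\beta Q^{p i_j-p k_j}=\beta Q^{p(i_j-k_j)}$. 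Hence $\widetilde{\p}^0(\xi)P^{pk}$ and $\widetilde{\p}^0(\xi P^k)$ carry the same index set, the same coefficients, and the same monomials; the signs agree because the substitution $i_j\mapsto p i_j$, $k_j\mapsto p k_j$ preserves the parity of every index and of every internal degree ($p$ being odd). This gives \eqref{eq:power operation on Ann(R)}.

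The step I expect to cost the most effort is the preliminary bookkeeping: writing the iterated Nishida relation with its correct binomial coefficients and signs, and pinning down the Bockstein-length-preservation statement for the $P^k$-action. Once those are fixed, the two applications of Lucas's theorem — the vanishing in (i) and the rescaling congruence in (ii) — are routine.
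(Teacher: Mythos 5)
The central formula in your argument is incorrect. The claim that $R$ is a right $A$-module algebra for the composition product, and hence that
\[
\xi P^k=\sum_{k_1+\cdots+k_s=k}\pm\Big(\prod_{j=1}^{s}\binom{(p-1)(i_j-k_j)-\epsilon_j}{k_j}\Big)\beta^{\epsilon_1}Q^{i_1-k_1}\cdots\beta^{\epsilon_s}Q^{i_s-k_s},
\]
is false: the Nishida relations do not commute with the composition product via the Cartan coproduct. Iterating the single Nishida step $P^{r}_*\beta Q^{i}=\sum_t(-1)^{r+t}\binom{(p-1)(i-r)-1}{r-pt}\beta Q^{i-r+t}P^t_*+\cdots$ gives nested coefficients of the form $\binom{(p-1)(i_j-t_{j-1})-1}{\,t_{j-1}-p\,t_j\,}$ with $t_0=k$, $t_s=0$ and $k_j=t_{j-1}-t_j$; the top of the binomial is governed by the \emph{cumulative} $t_{j-1}$, not by $k_j$ alone. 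For a concrete disagreement take $p=3$, $\xi=\beta Q^{10}\beta Q^{4}$, $k=4$: the genuine expansion yields $\xi P^4=\beta Q^{7}\beta Q^{3}$ in $R_2$, whereas your formula produces $2\,\beta Q^{7}\beta Q^{3}$. Consequently both sides of \eqref{eq:power operation on Ann(R)} are being evaluated with a wrong expansion; the fact that the wrong coefficient appears symmetrically and the equality still ``checks'' is an accident, not a proof.

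The rest of the plan is salvageable. Your two Lucas observations are formally correct and apply verbatim to the corrected coefficients: with $u_{j-1}=pt_{j-1}$ one has $\binom{(p-1)(pi_j-u_{j-1})-1}{u_{j-1}-pu_j}\equiv\binom{(p-1)(i_j-t_{j-1})-1}{t_{j-1}-pt_j}\pmod p$, and the binomial attached to a cumulative index $u_{j-1}\not\equiv0\pmod p$ vanishes mod $p$ since its top reduces to $(u_{j-1}\bmod p)-1$ while its bottom reduces to $u_{j-1}\bmod p$. Your disposal of the case when some $\epsilon_j=0$ is essentially right, although what the $P^k$-action preserves is the \emph{total} Bockstein count $\sum_j\epsilon_j$ (dual to the exterior degree of $\mathscr B[s]$), not the full pattern $(\epsilon_1,\dots,\epsilon_s)$; the total count $<s$ already forces $\widetilde{\p}^0$ to annihilate every term. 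The paper sidesteps the need for any closed iterated formula by inducting on $s$, expanding a single Nishida step, invoking the inductive hypothesis on $(\beta Q^{pi_2}\cdots\beta Q^{pi_s})P^{j}$ together with the observation that this vanishes whenever $j\not\equiv0\pmod p$, and finishing with exactly the Lucas congruence you identify; that is the cleaner route and the one you should take.
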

\begin{proof}
It is sufficient to show the assertion of lemma in the case $\epsilon_1=\cdots=\epsilon_s=1$.

We will prove by induction on $s$.

For $s=1$, it is easy to see that
\begin{align*}
\widetilde{\p}^0((\beta Q^i)P^k)&=\widetilde{\p}^0((-1)^k\binom{(p-1)(i-k)-1}{k}\beta Q^{i-k})\\
&=(-1)^k\binom{(p-1)(i-k)-1}{k}\beta Q^{pi-pk},
\end{align*}
and
\begin{align*}
(\widetilde{\p}^0(\beta Q^i))P^{pk}=\beta Q^{pi}P^{pk}
=(-1)^{pk}\binom{(p-1)(pi-pk)-1}{pk}\beta Q^{pi-pk}.
\end{align*}
Since $(-1)^{pk}\binom{(p-1)(pi-pk)-1}{pk}\equiv(-1)^k\binom{(p-1)(i-k)-1}{k}\mod p$, we have the assertion.

For $s>1$, by the inductive hypothesis,
\begin{align*}
\widetilde{\p}^0&((\beta Q^{i_1}\cdots\beta Q^{i_s})P^k)\\
&=\widetilde{\p}^0\left(\sum_t(-1)^{k+t}\binom{(p-1)(i_1-k)-1}{k-pt}\beta Q^{i_1-k+t}(\beta Q^{i_2}\cdots\beta Q^{i_s})P^t\right)\\
&\quad +\widetilde{\p}^0\left(\sum_t(-1)^{k+t}\binom{(p-1)(i_1-k)-1}{k-pt-1}Q^{i_1-k+t}(\beta Q^{i_2}\cdots\beta Q^{i_s})\beta P^t\right)\\
&=\sum_t(-1)^{k+t}\binom{(p-1)(i_1-k)-1}{k-pt}\beta Q^{p(i_1-k+t)}(\beta Q^{pi_2}\cdots\beta Q^{pi_s})P^{pt}.
\end{align*}
On the other hand,
\begin{align*}
(\widetilde{\p}^0&(\beta Q^{i_1}\cdots\beta Q^{i_s}))P^{pk}=(\beta Q^{pi_1}\cdots\beta Q^{pi_s})P^{pk}\\
&=\sum_j(-1)^{pk+j}\binom{(p-1)(pi_1-pk)-1}{pk-pj}\beta Q^{pi_1-pk+j} (\beta Q^{pi_2}\cdots \beta Q^{pi_s})P^j\\
&\quad + \sum_j(-1)^{pk+j}\binom{(p-1)(pi_1-pk)-1}{pk-pj-1}Q^{pi_1-pk+j} (\beta Q^{pi_2}\cdots \beta Q^{pi_s})\beta P^j\\
&=\sum_j(-1)^{k+j}\binom{(p-1)(i_1-k)-1}{k-j}\beta Q^{pi_1-pk+j} (\beta Q^{pi_2}\cdots \beta Q^{pi_s})P^j.
\end{align*}

If $j$ is not divisible by $p$ then $(p-1)(pi_2-j)-1\equiv j-1\mod p$, while $j-p\ell\equiv j\mod p$. Therefore,
\begin{align*}
(\beta &Q^{pi_2}\cdots \beta Q^{pi_s})P^j\\
&=\sum_j(-1)^{j+\ell}\binom{(p-1)(pi_2-\ell)-1}{j-p\ell}\beta Q^{pi_2-j+\ell} (\beta Q^{pi_3}\cdots \beta Q^{pi_s})P^\ell\\
&\quad + \sum_j(-1)^{j+\ell}\binom{(p-1)(pi_2-j)-1}{j-p\ell-1}Q^{pi_2-j+\ell} (\beta Q^{pi_3}\cdots \beta Q^{pi_s})\beta P^j\\
&=\sum_j(-1)^{j+\ell}\binom{(p-1)(pi_2-\ell)-1}{j-p\ell}\beta Q^{pi_2-j+\ell} (\beta Q^{pi_3}\cdots \beta Q^{pi_s})P^\ell=0.
\end{align*}

Thus,
\begin{align*}
(\widetilde{\p}^0&(\beta Q^{i_1}\cdots\beta Q^{i_s}))P^{pk}\\
&=\sum_j(-1)^{k+t}\binom{(p-1)(i_1-k)-1}{k-pt}\beta Q^{p(i_1-k+t)} (\beta Q^{pi_2}\cdots \beta Q^{pi_s})P^{pt}.
\end{align*}

The lemma is proved.
\end{proof}

It is easy to see that if $(q)\beta = 0$ then $(\widetilde{\p}^0(q)\beta)=0$ for $q\in R$. This fact together with Lemma \ref{lm:power operation on Ann(R)} show that the operation $\widetilde{\p}^0$ induces a power operation on $(\fp\tsor{A}\mathscr{R}_s\fp)^\#$, which is also denoted by $\p^0$.

Similarly, the power operation $\widetilde{\p}^0$ acting on $\Lambda\otimes H$ mentioned in Section \ref{sec:cohomology} also induces a power operation on $(\fp\tsor{A}\mathscr{R}_sP)^\#$ which is also denoted by $\p^0$. 

\begin{proposition}\label{pro:power operations}
The power operations $\p^0$s commute with each other through the Lannes-Zarati homomorphism. In other words, the following diagram is commutative
\[
\xymatrix{
\Ext{A}{s,s+t}{M}{\fp}\ar[r]^{\p^0}\ar[d]_{\varphi_s^M}&\Ext{A}{s,p(s+t)}{M}{\fp}\ar[d]_{\varphi_s^M}\\
(\fp\tsor{A}\mathscr{R}_sM)^\#_t\ar[r]^{\p^0}&(\fp\tsor{A}\mathscr{R}_sM)^\#_{p(s+t)-s},
}
\]
for $M=\fp$ and $M=P$.
\end{proposition}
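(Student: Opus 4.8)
The plan is to reduce the statement to the corresponding identity at the level of chains, where all four operations are given by explicit formulas, and then pass to the induced maps. First I would assemble the chain-level models. By Proposition~\ref{pro:representation of vaphi_s in lambda}, the projection
\[
\widetilde{\varphi}_s^M\colon\Lambda_s\otimes M^\#\to(\mathscr{R}_sM)^\#,\qquad \lambda_I\otimes\ell\mapsto(-1)^{(s-1)(s-2)/2}[Q^I\otimes\ell],
\]
represents $\varphi_s^M$; for $M=\fp$ it is, up to the fixed sign, the canonical projection $\Lambda_s\to R_s=(\mathscr{R}_s\fp)^\#$, and for $M=P$ the projection $\Lambda_s\otimes H\to(\mathscr{R}_sP)^\#$, where $H=P^\#$. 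On the source side, $\widetilde{\p}^0$ acts on $\Lambda_s\otimes M^\#$ (on $\Lambda_s\otimes H$ when $M=P$) by multiplying each $\lambda$-index by $p$, killing a monomial unless every Bockstein exponent equals $1$ and, in the $P$-case, composing on the $H$-factor with the Kameko dual $\theta$; it commutes with the differential and induces $\p^0$ on $\Ext{A}{s,*}{M}{\fp}$. On the target side, $\widetilde{\p}^0$ descends to $R$ by Lemma~\ref{lm:power operation on R}, is $A$-linear there by Lemma~\ref{lm:power operation on Ann(R)}, hence induces $\p^0$ on $(\fp\tsor{A}\mathscr{R}_s\fp)^\#$, and the analogous $H$-twisted operation induces $\p^0$ on $(\fp\tsor{A}\mathscr{R}_sP)^\#$ as recorded before the proposition. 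Granting all this, it suffices to show that the square of chain maps
\[
\widetilde{\varphi}_s^M\circ\widetilde{\p}^0=\widetilde{\p}^0\circ\widetilde{\varphi}_s^M\colon\Lambda_s\otimes M^\#\longrightarrow(\mathscr{R}_sM)^\#
\]
commutes; passing to the induced maps on $\Ext{A}{s,s+t}{M}{\fp}$ and on $(\fp\tsor{A}\mathscr{R}_sM)^\#$ then gives the diagram of the proposition.

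I would check this chain-level identity on an admissible monomial $\lambda_I\otimes\ell$ with $\lambda_I=\lambda^{\epsilon_1}_{i_1-1}\cdots\lambda^{\epsilon_s}_{i_s-1}$. If some $\epsilon_k=0$, or if $M=P$, $\ell=a^\epsilon b^{[t]}$ and $\epsilon=0$, then $\widetilde{\varphi}_s^M\circ\widetilde{\p}^0$ vanishes on $\lambda_I\otimes\ell$ by the definition of $\widetilde{\p}^0$ on the source, while $\widetilde{\p}^0\circ\widetilde{\varphi}_s^M$ vanishes because $\widetilde{\p}^0$ kills $\pm[Q^I\otimes\ell]$ for the same reason. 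If $\epsilon_1=\cdots=\epsilon_s=1$ (and $\epsilon=1$ in the $P$-case), both composites return
\[
(-1)^{(s-1)(s-2)/2}\,\bigl[\beta Q^{pi_1}\cdots\beta Q^{pi_s}\otimes\theta(\ell)\bigr],
\]
with the convention $\theta(\ell)=\ell$ when $M=\fp$; the signs agree because $\widetilde{\p}^0$ introduces no sign and preserves length. So the square commutes on the nose, and this part is immediate from the formulas.

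The step I expect to be the genuine obstacle is the one invoked above (and already asserted in the discussion preceding the proposition) without proof: that $\widetilde{\p}^0$ is well defined on the \emph{quotient} $(\mathscr{R}_sP)^\#$ of $R_s\otimes H$, the analogue for $P$ of Lemmas~\ref{lm:power operation on R} and~\ref{lm:power operation on Ann(R)}, which is what legitimizes $\p^0$ on $(\fp\tsor{A}\mathscr{R}_sP)^\#$ to begin with. By Proposition~\ref{pro:dual of Singer functor} and the remark following it (resting on Proposition~\ref{pro:expan R_sM in Gamma M}), the kernel of $R_s\otimes H\to(\mathscr{R}_sP)^\#$ is spanned by the monomials $Q^I\otimes\ell$ with $e(I)<|\ell|$, so it is enough to see that $\widetilde{\p}^0$ carries this span into itself. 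Combining the excess identity $e(\widetilde{\p}^0\lambda_I)=p\,e(\lambda_I)-(p-1)(s-2)$ from the proof of Lemma~\ref{lm:power operation on R} with $|ab^{[t]}|=2t+1$ and $|\theta(ab^{[t]})|=2p(t+1)-1$, one finds that $e(I)<2t+1$ forces
\[
e(\widetilde{\p}^0\lambda_I)\le 2pt-(p-1)(s-2)<2p(t+1)-1
\]
for every $s\ge0$ and every odd prime $p$; hence the span in question is preserved. The $A$-linearity needed for the descent through $\fp\tsor{A}(-)$ then follows exactly as in Lemma~\ref{lm:power operation on Ann(R)}, using in addition the relation $\theta(ab^{[t]}P^i)=(\theta(ab^{[t]}))P^{pi}$ recorded in Section~\ref{sec:cohomology} and the Cartan formula. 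With this in place, the commuting chain-level square of the first paragraph induces exactly the square of the proposition, completing the proof.
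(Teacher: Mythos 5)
Your proof is correct and follows the same chain-level strategy the paper has in mind: the paper's entire proof is the single sentence that the result is immediate from Proposition~\ref{pro:representation of vaphi_s in lambda}, and you simply carry out the verification that $\widetilde{\p}^0\circ\widetilde{\varphi}_s^M=\widetilde{\varphi}_s^M\circ\widetilde{\p}^0$ on monomials, which is exactly the content of that immediacy. You additionally fill in a point the paper asserts without argument just before the proposition, namely that $\widetilde{\p}^0$ preserves the kernel of $R_s\otimes H\to(\mathscr{R}_sP)^\#$ and hence descends to $(\fp\tsor{A}\mathscr{R}_sP)^\#$; your excess computation establishing this is correct.
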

\begin{proof}
It is immediate from Proposition \ref{pro:representation of vaphi_s in lambda}.
\end{proof}

\section{The behavior of the mod $p$ Lannes-Zarati homomorphism}
In this section, we use the chain-level representation map of the $\varphi_s^M$ constructed in the previous section to investigate its behavior.
\subsection{The behavior of $\varphi_s^{\fp}$}
\begin{theorem}\label{thm:rank3}
The third Lannes-Zarati homomorphism
\[
\varphi_3^{\fp}:\Ext{A}{3,3+t}{\fp}{\fp}\to (\fp\tsor{A}\mathscr{R}_3\fp)^\#_t
\]
is a monomorphism for $t=0$ and vanishing for all $t>0$.
\end{theorem}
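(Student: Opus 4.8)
The plan is to use the chain-level representation $\widetilde{\varphi}_3^{\fp}:\Lambda_3\to (\mathscr{R}_3\fp)^\#$ from Proposition \ref{pro:representation of vaphi_s in lambda} (for $M=\fp$, where $M^\#=\fp$ concentrated in degree $0$), which sends an admissible monomial $\lambda_I$ of length $3$ to $\pm[Q^I]$, and which is zero precisely when $e(I)<0$. Thus $\varphi_3^{\fp}$ kills any cohomology class represented by a cycle all of whose admissible terms have negative excess. First I would dispose of the positive-stem case $t>0$: I would invoke the known additive structure of $\Ext{A}{3,3+t}{\fp}{\fp}$, taken from Aikawa \cite{Aikawa1980} (and Liulevicius \cite{Liu62}), and for each family of generators exhibit a cycle in $\Lambda_3$ representing it whose admissible terms all have negative excess, so that $\widetilde{\varphi}_3^{\fp}$ sends it to zero in $R_3$. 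Here the power operation $\p^0$ is the key labor-saving device: by Proposition \ref{pro:power operations} the diagram with $\p^0$ on both sides commutes, and by Lemma \ref{lm:power operation on R} (together with the excess computation $e(\widetilde{\p}^0\lambda_I)=p\,e(\lambda_I)-(p-1)(s-2)$ in its proof) $\widetilde{\p}^0$ preserves negative excess; so it suffices to check the claim on the finitely many $\p^0$-generators of $\Ext{A}{3,3+t}{\fp}{\fp}$ and then propagate along $\p^0$-towers.

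For the remaining case $t=0$, I would identify $\Ext{A}{3,3}{\fp}{\fp}$: it is one-dimensional, spanned by $\alpha_0^3=h_0^3$ (the class of $(\lambda^0_{-1})^3$, equivalently $[\lambda_{-1}\lambda_{-1}\lambda_{-1}]$ in the Bockstein-only part). To show $\varphi_3^{\fp}$ is a monomorphism in this stem it is enough to show this single class is not sent to zero, i.e.\ that its representing cycle has a nonzero image in $(\fp\tsor{A}\mathscr{R}_3\fp)^\#$. Concretely I would compute $\widetilde{\varphi}_3^{\fp}$ on the relevant monomial: the class is represented (up to sign and lower-excess admissible terms) by a product of $\mu$'s, whose image in $R_3$ is a product of $\beta Q^0$'s type term of excess $0\ge 0=|\ell|$, hence survives, and then check that this image is still nonzero after passing to the $A$-coinvariants $\fp\tsor{A}\mathscr{R}_3\fp$ — i.e.\ that it does not lie in the image of the $A$-action. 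This last verification, using the Nishida relations to describe the $A$-action on $R_3$ together with the basis of $(\mathscr{R}_3\fp)^\#$ from Proposition \ref{pro:dual of Singer functor}, is where the real content lies.

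The main obstacle I expect is precisely organizing the positive-stem vanishing: one must have a clean, complete list of $\p^0$-generators of $\Ext{A}{3,3+t}{\fp}{\fp}$ and, for each, an explicit $\Lambda_3$-cycle all of whose admissible monomials have negative excess. Some generators (e.g.\ those built from $h_i$'s with small indices, or the families $c_i$, $\phi_i$, and their relatives) may be initially represented by cycles containing an admissible term of nonnegative excess, so part of the work is to rewrite the representing cycle — adding a boundary — so that every surviving admissible term has negative excess; showing this is always possible is the crux. A secondary, more routine difficulty is bookkeeping of signs in $\widetilde{\varphi}_3^{\fp}$ (the factor $(-1)^{(s-1)(s-2)/2}$ and the signs in the excess/Adem manipulations), but these do not affect the vanishing statements and only matter for the injectivity at $t=0$, where a single nonzero coefficient suffices.
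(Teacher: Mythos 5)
Your overall plan—use the chain-level representation $\widetilde{\varphi}_3^{\fp}:\Lambda_3\to R_3$, run through the Liulevicius--Aikawa basis of $\Ext{A}{3,*}{\fp}{\fp}$, propagate along $\p^0$-towers via Proposition \ref{pro:power operations}, and treat $t=0$ separately—is exactly the paper's route, and the essential idea (image in $R_3$ vanishes for every positive-stem generator; $\alpha_0^3$ survives) is right. Two corrections and one point of technique are worth recording, though.

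First, for $p$ odd, $\alpha_0\neq h_0$: the class $\alpha_0$ is the Bockstein class $[\lambda^0_{-1}]$ in stem $0$, whereas $h_0=[\lambda^1_0]$ lives in stem $2p-3>0$. Accordingly the image of $(\lambda^0_{-1})^3$ under $\widetilde{\varphi}_3^{\fp}$ is $-Q^0Q^0Q^0$ (not a product of $\beta Q^0$'s), which is visibly nonzero in the bottom degree of $(\fp\tsor{A}\mathscr{R}_3\fp)^\#$; that takes care of $t=0$.

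Second, the framing ``rewrite the representing cycle by adding a boundary so that every admissible term has negative excess'' is not what the paper does, and the boundary-adding step is a red herring: since $\widetilde{\varphi}_3^{\fp}$ is induced by the projection $\Lambda_3\to R_3$ (whose kernel is exactly the span of negative-excess admissibles), it suffices to take the {\em given} Aikawa representatives and kill their images inside $R_3$ using Adem relations in the Dyer--Lashof algebra — one never needs to modify the cycle in $\Lambda_3$. The paper's actual labor-saving device, which you should add to your toolkit, is Lemma \ref{lm:of theorem rank3}: if some length-$s$ factor $\lambda_I$ of a product already dies under $\widetilde{\varphi}_s^{\fp}$, so does the whole monomial under $\widetilde{\varphi}_{s+\ell}^{\fp}$. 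Combined with the length-$2$ vanishings established in Part I (e.g.\ $\widetilde{\varphi}_2^{\fp}(\lambda^1_{2p^{i+1}-1}\lambda^1_{p^i-1})=0$, $\widetilde{\varphi}_2^{\fp}(\lambda^1_1\lambda^0_{-1})=0$, $\beta Q^iQ^0=0$ in $R_2$), this disposes of most of the list; what remains (essentially $L_i\lambda^1_{p^j}$, $L_i\lambda^0_{-1}$, $\varrho_3$, $\varrho_3'$, and the indecomposable $N$ representing $g_1$) requires honest Adem computations in $R_3$. Without that lemma, your approach would in principle still work, but the excess/Adem bookkeeping for each individual length-$3$ monomial is substantially heavier.
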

From Proposition \ref{pro:representation of vaphi_s in lambda}, we get the following lemma.
\begin{lemma}\label{lm:of theorem rank3}
If $\lambda_I\in \Lambda_s$ and $\lambda_J\in\Lambda_{\ell}$ such that $\widetilde{\varphi}_s^{\fp}(\lambda_I)=0$ or $\widetilde{\varphi}_{\ell}^{\fp}(\lambda_J)=0$ then $\widetilde{\varphi}_{s+\ell}^{\fp}(\lambda_I\lambda_J)=0$.
\end{lemma}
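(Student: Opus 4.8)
The plan is to observe that, up to a sign, $\widetilde{\varphi}_\bullet^{\fp}$ is nothing but the canonical projection of algebras $\pi\colon\Lambda\to R$ onto the Dyer--Lashof algebra, and then to invoke the fact that $\ker\pi$ is a two-sided ideal. Since $\fp^\#=\fp$ is concentrated in degree $0$, we have $\Lambda_s\otimes\fp^\#=\Lambda_s$, and Proposition \ref{pro:representation of vaphi_s in lambda} with $M=\fp$ gives $\widetilde{\varphi}_s^{\fp}(\lambda_I)=(-1)^{(s-1)(s-2)/2}[Q^I]$, where $Q^I=\pi(\lambda_I)\in R_s$ is the image of $\lambda_I$ under the canonical projection and $[\,\cdot\,]$ denotes the class in $(\mathscr{R}_s\fp)^\#$.

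The first step is to check that for $M=\fp$ this last class map $R_s\to(\mathscr{R}_s\fp)^\#$ is an isomorphism, so that $\widetilde{\varphi}_s^{\fp}(\lambda_I)=0$ is equivalent to $\pi(\lambda_I)=0$ in $R$. Indeed $\fp$ is concentrated in even degree, so $\mathscr{R}_s\fp=\mathscr{B}[s]\cdot St_s(1)$ with $St_s(1)=S_s(1)$ the trivial total power; hence $\mathscr{R}_s\fp\cong\mathscr{B}[s]$, and combined with the isomorphism $\mathscr{B}[s]\cong R_s^\#$ given by $\nu_s$, dualizing yields $(\mathscr{R}_s\fp)^\#\cong R_s$. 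Under this identification the composite $\Lambda_s\xrightarrow{\pi}R_s\to(\mathscr{R}_s\fp)^\#$ is $\pi$ up to the sign $(-1)^{(s-1)(s-2)/2}$. (Equivalently, for $M=\fp$ the observation following Proposition \ref{pro:expan R_sM in Gamma M} kills nothing beyond what already dies in $R$, since there $|\ell|=0$.)

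Granting this, the lemma is immediate: if $\widetilde{\varphi}_s^{\fp}(\lambda_I)=0$ then $\pi(\lambda_I)=0$, hence $\pi(\lambda_I\lambda_J)=\pi(\lambda_I)\pi(\lambda_J)=0$, so $\widetilde{\varphi}_{s+\ell}^{\fp}(\lambda_I\lambda_J)=\pm[\pi(\lambda_I\lambda_J)]=0$; the case $\widetilde{\varphi}_\ell^{\fp}(\lambda_J)=0$ is symmetric. I do not expect a genuine obstacle; the only points worth a line of care are that the sign $(-1)^{(s-1)(s-2)/2}$ is irrelevant to a vanishing assertion, and that the product $\lambda_I\lambda_J\in\Lambda_{s+\ell}$ is in general inadmissible, so one should first rewrite it in the admissible basis before applying $\widetilde{\varphi}_{s+\ell}^{\fp}$ — which is legitimate precisely because $\pi$ respects the Adem relations. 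In short, the content of the lemma is just the multiplicativity of the chain-level Lannes--Zarati map in the case $M=\fp$.
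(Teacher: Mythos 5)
Your proof is correct and matches the route the paper intends: the paper states the lemma immediately after Proposition \ref{pro:representation of vaphi_s in lambda} without further argument, and your write-up simply supplies the implicit details — namely that for $M=\fp$ the chain-level map $\widetilde{\varphi}_s^{\fp}$ is (up to sign) the algebra projection $\Lambda\to R$ followed by the isomorphism $R_s\cong(\mathscr{R}_s\fp)^\#$, so vanishing is preserved under multiplication because $\ker(\Lambda\to R)$ is a two-sided ideal. Your added observations (the sign is irrelevant; for $M=\fp$ the condition $e(I)<|\ell|$ reduces to $e(I)<0$ and hence identifies $(\mathscr{R}_s\fp)^\#$ with $R_s$) are exactly the points needed to make the argument airtight.
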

\begin{proof}[Proof of Theorem \ref{thm:rank3}]
By the results of Liulevicius \cite{Liu62} and Aikawa \cite{Aikawa1980}, $\Ext{A}{3,3+t}{\fp}{\fp}$ is spanned by following elements (for convenience we will write $\ext{A}{s,s+t}$ for $\Ext{A}{s,s+t}{\fp}{\fp}$)
\begin{enumerate}
\item $h_ih_jh_k=[\lambda^1_{p^i-1}\lambda^1_{p^j-1}\lambda^1_{p^k-1}]\in\ext{A}{3,2(p-1)(p^i+p^j+p^k)},0\leq i\leq j-2\leq k-4$;
\item $\alpha_0h_ih_j=[\lambda^1_{p^i-1}\lambda^1_{p^j-1}\lambda^0_{-1}]\in\ext{A}{3,2(p-1)(p^i+p^j)+1},1\leq i\leq j-2$;
\item $\alpha_0^2h_i=[\lambda^1_{p^i-1}(\lambda^0_{-1})^2]\in\ext{A}{3,2(p-1)p^i+2}, i\geq 1$;
\item $\alpha_0^3=[(\lambda^0_{-1})^3]\in \ext{A}{3,3}$;
\item $\tilde{\lambda}_ih_j=[L_i\lambda^1_{p^j}]\in\ext{A}{3,2(p-1)(p^{i+1}+p^j)}, i,j\geq0, j\neq i+2$;
\item $\tilde{\lambda}_i\alpha_0=[L_i\lambda^0_{-1}], i\geq0$;
\item $h_{i;1,2}h_j=[\lambda^1_{p^{i+1}-1}\lambda^1_{2p^i-1}\lambda^1_{p^j-1}]\in\ext{A}{3,2(p-1)(p^{i+1}+2p^i+p^j)}, i,j\geq0, j\neq i+2,i,i-1$;
\item $h_{i;1,2}\alpha_0=[\lambda^1_{p^{i+1}-1}\lambda^1_{2p^i-1}\lambda^0_{-1}]\in\ext{A}{3,2(p-1)(p^{i+1}+2p^i)+1}, i\geq1$;
\item $h_{i;2,1}h_j=[\lambda^1_{2p^{i+1}-1}\lambda^1_{p^i-1}\lambda^1_{p^j-1}]\in\ext{A}{2(p-1)(2p^{i+1}+p^i+p^j)};i,j\geq0, j\neq i+2,i\pm1,i$;
\item $h_{i;2,1}\alpha_0=[\lambda^1_{2p^{i+1}-1}\lambda^1_{p^i-1}\lambda^0_{-1}]\in\ext{A}{3,2(p-1)(2p^{i+1}+p^i)+1},i\geq1$;
\item $h_j\rho=[\lambda^1_{p^j-1}\lambda^1_1\lambda^0_{-1}]\in\ext{A}{3,2(p-1)(p^j+2)+1},j\geq 2$;
\item $h_{i;3,2,1}=(\p^0)^i[\lambda^1_{3p^{2}-1}\lambda^1_{2p-1}\lambda^1_{0}]\in\ext{A}{3,2(p-1)(3p^{i+2}+2p^{i+1}+p^i},p\neq 3,i\geq0$;
\item $h'_{3,2,1}=[\lambda^1_{3p-1}\lambda^1_{1}\lambda^0_{-1}]\in\ext{A}{3,2(p-1)(3p+2)+1},p\neq 3$;
\item $h_{i;2,2,1}=(\p^0)^i[\lambda^1_{2p^{3}-1}\lambda^1_{2p-1}\lambda^1_{0}]\in\ext{A}{3,2(p-1)(2p^{i+3}+2p^{i+1}+p^i)},p=3,i\geq0$;
\item $h'_{2,2,1}=[\lambda^1_{2p^2-1}\lambda^1_{1}\lambda^0_{-1}]\in\ext{A}{3,2(p-1)(2p^2+2)+1},p=3$;
\item $h_{i;1,3,1}=(\p^0)^i[\lambda^1_{p^{2}-1}\lambda^1_{3p-1}\lambda^1_{0}]\in\ext{A}{3,2(p-1)(p^{i+2}+3p^{i+1}+p^i)},p\neq 3, i\geq0$;
\item $h'_{1,3,1}=[\lambda^1_{p-1}\lambda^1_{2}\lambda^0_{-1}]\in\ext{A}{3,2(p-1)(p+3)+1},p\neq 3$;
\item $h_{i;2,1,2}=(\p^0)^i[\lambda^1_{2p^{2}-1}\lambda^1_{p-1}\lambda^1_{1}]\in\ext{A}{3,2(p-1)(2p^{i+2}+p^{i+1}+2p^i},i\geq0$;
\item $h_{i;1,2,3}=(\p^0)^i[\lambda^1_{p^2-1}\lambda^1_{2p-1}\lambda^1_{2}]\in\ext{A}{3,2(p-1)(p^{i+2}+2p^{i+1}+3p^i)},p\neq 3, i\geq0$;
\item $\varrho_3=[\lambda^1_{2}(\lambda^0_{-1})^2]\in\ext{A}{3,6(p-1)+2}, p\neq 3$;
\item $\varrho'_3=[\lambda^1_5(\lambda^0_{-1})^2]\ext{A}{3,12(p-1)+2},p=3$;
\item $f_i=(\p^0)^{i-1}[M]\in\ext{A}{3,2(p-1)(p^{i+1}+2p^i)},i\geq 1$;
\item $g_i=(\p^0)^{i-1}[N]\in\ext{A}{3,2(p-1)(2p^{i+1}+p^i)},i\geq 1$;
\end{enumerate}
where
\begin{align*}
L_i&=(\p^0)^i\left(\sum_{j=1}^{(p-1)}\frac{(-1)^{j+1}}{j}\lambda^1_{(p-j)-1}\lambda^1_{j-1}\right), i\geq0;\\
M&=\sum_{j=1}^{p-1}\frac{(-1)^{j+1}}{j}(\lambda^1_{jp-1}\lambda^1_{(p^2-jp)-1}\lambda^1_{2p-1}-2\lambda^1_{p^2-1}\lambda^1_{j-1}\lambda^1_{2p-j-1}\\
&\quad\quad\quad-2\lambda^1_{p^2-1}\lambda^1_{p+j-1}\lambda^1_{p-j-1});\\
N&=\sum_{j=1}^{p-1}\frac{(-1)^{j+1}}{j}(2\lambda^1_{jp-1}\lambda^1_{(2p^2-jp)-1}\lambda^1_{p-1}+2\lambda^1_{p^2+jp-1}\lambda^1_{p^2-jp-1}\lambda^1_{p-1}\\
&\quad\quad\quad-\lambda^1_{2p^2-1}\lambda^1_{j-1}\lambda^1_{p-j-1}).
\end{align*}

Observe that the elements $h_ih_jh_k\ (0\leq i\leq j-2\leq k-4)$, $\alpha_0h_ih_j\ (1\leq i\leq j-2)$, $h_{i;1,2}h_j$, $h_{i;1,2}\alpha_0$, $h_{0;1,3,1}$, $h'_{1,3,1}$, $h_{0;1,2,3},$ and $f_0$ are represented by cycles of negative excess. Therefore, their images under $\widetilde{\varphi}_3^{\fp}$ are trivial, so that their images under $\varphi_3^{\fp}$ are also trivial.

By Proposition \ref{pro:power operations},
\[
\varphi_3^{\fp}(h_{i;1,3,1})=\varphi_3^{\fp}((\p^0)^i(h_{0;1,3,1}))=(\p^0)^i(\varphi_3^{\fp}(h_{0;1,3,1}))=0, i\geq0.
\]

By the same argument, we get $\varphi_3^{\fp}(h_{i;1,2,3})=0$ for $i\geq0$ and $\varphi_3^{\fp}(f_i)=0$ for $i\geq 1$.

From the proof of \cite[Theorem 4.2]{Chon_Nhu2019}, we have that $\widetilde{\varphi}_2^{\fp}(\lambda^1_{2p^{i+1}-1}\lambda^1_{p^i-1})=0$ for $i\geq0$ and $\widetilde{\varphi}_2^{\fp}(\lambda^1_1\lambda^0_{-1})=0$. Therefore, basing on Lemma \ref{lm:of theorem rank3}, we obtain that 
\begin{itemize}
\item $\widetilde{\varphi}_3^{\fp}(\lambda^1_{2p^{i+1}-1}\lambda^1_{p^i-1}\lambda^1_{p^j-1})=0$, 
\item $\widetilde{\varphi}_3^{\fp}(\lambda^1_{2p^{i+1}-1}\lambda^1_{p^i-1}\lambda^0_{-1})=0$,
\item $\widetilde{\varphi}_3^{\fp}(\lambda^1_{p^j-1}\lambda^1_1\lambda^0_{-1})=0$, 
\item $\widetilde{\varphi}_3^{\fp}(\lambda^1_{3p-1}\lambda^1_{1}\lambda^0_{-1})=0$, and
\item $\widetilde{\varphi}_3^{\fp}(\lambda^1_{2p^2-1}\lambda^1_{1}\lambda^0_{-1})=0$.
\end{itemize}

It follows that the images of elements $h_{i;2,1}h_j$, $h_{i;2,1}\alpha_0$, $h_j\rho$, $h'_{3,2,1}$ and $h'_{2,2,1}$ are trivial under the map $\varphi_3^{\fp}$.

Basing on \cite[Theorem 4.2]{Chon_Nhu2019}, one gets
$\widetilde{\varphi}_2^{\fp}(L_i)=\beta Q^{(p-1)p^i}\beta Q^{p^i}.$ Therefore, we obtain
\begin{itemize}
\item $\widetilde{\varphi}_3^{\fp}(L_i\lambda_{p^j})=-\beta Q^{(p-1)p^i}\beta Q^{p^i}\beta Q^{p^j}$, and
\item $\widetilde{\varphi}_3^{\fp}(L_i\lambda^0_{-1})=-Q^{(p-1)p^i}\beta Q^{p^i} Q^0$.
\end{itemize}

Since the right hand side of the first formula is of negative excess for $i,j\geq0$, it implies that $\varphi_3^{\fp}(\tilde{\lambda}_ih_j)=0$.

By the proof of \cite[Theorem 4.2]{Chon_Nhu2019}, $\beta Q^i Q^0=0\in R_2$, so that the right hand side of the second formula is trivial in $R_3$. It follows $\varphi_3^{\fp}(\alpha_0\tilde{\lambda}_i)=0$. By the same method, we also get $\varphi_3^{\fp}(\alpha_0^2h_i)=0$.

From above, $\widetilde{\varphi}_2^{\fp}(\lambda^1_{2p-1}\lambda^1_{0})=0$ and $\widetilde{\varphi}_2^{\fp}(\lambda^1_{p-1}\lambda^1_1)=0$, therefore,
\begin{itemize}
\item $\widetilde{\varphi}_3^{\fp}(\lambda^1_{3p^{2}-1}\lambda^1_{2p-1}\lambda^1_{0})=0$,
\item $\widetilde{\varphi}_3^{\fp}(\lambda^1_{2p^{3}-1}\lambda^1_{2p-1}\lambda^1_{0})=0$, and
\item $\widetilde{\varphi}_3^{\fp}(\lambda^1_{2p^{2}-1}\lambda^1_{p-1}\lambda^1_{1})=0$.
\end{itemize}
It follows that, using Proposition \ref{pro:power operations}, $\varphi_3^{\fp}(h_{0;3,2,1})=0$, $\varphi_3^{\fp}(h_{0;2,2,1})=0$ and $\varphi_3^{\fp}(h_{0;2,1,2})=0$, therefore, $\varphi_3^{\fp}(h_{i;3,2,1})=0$, $\varphi_3^{\fp}(h_{i;2,2,1})=0$ and $\varphi_3^{\fp}(h_{i;2,1,2})=0$ for $i\geq0$.

Applying the adem relation, in $R_2$, one gets $\beta Q^3 Q^0=0$ and $\beta Q^6Q^0=0$, therefore,
\begin{itemize}
\item $\widetilde{\varphi}_3^{\fp}(\lambda^1_{2}(\lambda^0_{-1})^2)=-\beta Q^3 Q^0 Q^0=0$, and
\item $\widetilde{\varphi}_3^{\fp}(\lambda^1_5(\lambda^0_{-1})^2)=-\beta Q^6Q^0Q^0=0$.
\end{itemize}

Hence, $\varphi_3^{\fp}(\varrho_3)=0$ and $\varphi_3^{\fp}(\varrho'_3)=0$.

By inspection, using Proposition \ref{pro:representation of vaphi_s in lambda}, one gets
\begin{multline*}
\widetilde{\varphi}_3(N)=\\-\sum_{j=1}^{p-1}\frac{(-1)^{j}}{j}\left(2\beta Q^{jp}\beta Q^{2p^2-jp}\beta Q^{p}+2\beta Q^{p^2+jp}\beta Q^{p^2-jp}\beta Q^p-\beta Q^{2p^2}\beta Q^{j}\beta Q^{p-j}\right).
\end{multline*}

Since two first terms of the right hand side of the formula are of negative excess, then
\[
\widetilde{\varphi}_3(N)=\sum_{j=1}^{p-1}\frac{(-1)^{j}}{j}\beta Q^{2p^2}\beta Q^{j}\beta Q^{p-j}.
\]

It is easy to verify that $\beta Q^j\beta Q^{p-j}=0$ for $j<p-1$, therefore,
\[
\widetilde{\varphi}_3(N)=-\beta Q^{2p^2}\beta Q^{p-1}\beta Q^{1}.
\]

Applying the adem relation, we get
\[
\beta Q^{2p^2}\beta Q^{p-1}=\sum_{i}(-1)^{2p^2+i+1}\binom{(p-1)(i+1-p)-1}{pi-2p^2-1}\beta Q^{2p+p-1-i}\beta Q^i.
\]

Since $pi\geq 2p^2+1$, then $pi=2p^2+pa$ for some $a\geq1$. In this case, we get 
\begin{align*}
e(\beta Q^{2p+p-1-i}\beta Q^i)&=2(2p^2+p-1-i)-2(p-1)i=4p^2+2p-2-2pi\\
&=4p^2+2p-2-4p^2-2pa=2p-2pa-2<0.
\end{align*}
Therefore, $\beta Q^{2p^2}\beta Q^{p-1}=0$, and then $\widetilde{\varphi}_3(N)=0$. It implies that $\varphi_3^{\fp}(g_1)=0$ and, hence, $\varphi_3^{\fp}(g_i)=0$ for $i\geq1$.

Finally, it is easy to check that $\varphi_3(\alpha_0^3)=-Q^0Q^0Q^0\neq 0\in R_3$.

The proof is complete.
\end{proof}

\subsection{The behavior of $\varphi_s^{P}$}
\begin{theorem}\label{thm:varphi_0^P}
The Lannes-Zarati homomorphism $\varphi_0^{P}:\Ext{A}{0,t}{P}{\fp}\to (\fp\tsor{A}\mathscr{R}_0P)^\#_t$ is an isomorphism. 
\end{theorem}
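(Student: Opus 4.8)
The plan is to exploit that at $s=0$ the whole Lannes-Zarati apparatus degenerates. First I would record, from the description of the Singer construction in Section~\ref{sec:LZ homo}, that $\mathscr{R}_0$ is the identity functor: for $s=0$ the group $E_0$ is trivial, $B\Sigma_{p^0}$ is a point, $L_0=1$, and $\mathscr{B}[0]=\mathcal{B}[0]=\fp$, so that $St_0(m)=S_0(m)=m$ and hence $\mathscr{R}_0 P=\mathscr{B}[0]\cdot St_0(P^+)\oplus\mathcal{B}[0]\cdot St_0(P^-)=P$. Thus the target of $\varphi_0^P$ is $(\fp\tsor{A}P)^\#$. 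On the source side, $\Ext{A}{0}{P}{\fp}=\Hom_A(P,\fp)$, and since an $A$-linear map $P\to\fp$ factors through $P/\bar{A}P$, this is again $(\fp\tsor{A}P)^\#$ --- precisely the group whose basis $\widehat{h}_i=\bigl[ab^{[(p-1)p^i-1]}\bigr]$, $\widehat{h}_i(k)=\bigl[ab^{[kp^i-1]}\bigr]$ is computed in Theorem~\ref{thm:Ext(P)^0}.

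Second, I would pin down the map itself via the chain-level representative of Proposition~\ref{pro:representation of vaphi_s in lambda}. Specialized to $s=0$, where $\Lambda_0=\fp\cdot 1$ and, by Proposition~\ref{pro:dual of Singer functor}, $(\mathscr{R}_0 P)^\#=H$ with basis $\{1\otimes\ell:\ell\in H\}$, the projection $\widetilde{\varphi}_0^P\colon\Lambda_0\otimes H\to(\mathscr{R}_0 P)^\#$ becomes the map $H\to H$, $h\mapsto-h$ (the sign $(-1)^{(s-1)(s-2)/2}$ of Proposition~\ref{pro:representation of vaphi_s in lambda} equals $-1$ at $s=0$). This is an isomorphism of chain groups commuting with differentials, so it induces a monomorphism $\varphi_0^P\colon\Ext{A}{0}{P}{\fp}=H^0(\Lambda\otimes H)\hookrightarrow H^0(\{(\mathscr{R}_s P)^\#\}_{s})=(\fp\tsor{A}\mathscr{R}_0 P)^\#$, given on the basis of Theorem~\ref{thm:Ext(P)^0} by $\widehat{h}_i\mapsto-[ab^{[(p-1)p^i-1]}]$ and $\widehat{h}_i(k)\mapsto-[ab^{[kp^i-1]}]$ in $(\fp\tsor{A}P)^\#$. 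Comparing the two sides, both equal $(\fp\tsor{A}P)^\#$ by the identifications above, so the monomorphism is an isomorphism; one may also phrase this as a dimension count, internal degree by internal degree, against Theorem~\ref{thm:Ext(P)^0}.

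The one place that calls for care --- and which I expect to be the only (mild) obstacle --- is the bookkeeping that matches $H^0$ of the quotient complex $\{(\mathscr{R}_s P)^\#\}_{s}$ with the asserted target $(\fp\tsor{A}\mathscr{R}_0 P)^\#$, and, in the chain-level picture, the verification that $\varphi_0^P$ is surjective rather than merely injective (injectivity being automatic since $\widetilde{\varphi}_0^P$ is $\pm\mathrm{id}$). Concretely this reduces to checking that $\widetilde{\varphi}_1^P$ annihilates no nonzero element of $d\bigl(\Lambda_0\otimes H\bigr)$, or, avoiding this altogether, to the dimension count already indicated together with $\mathscr{R}_0=\mathrm{Id}$. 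Either way there is no essential difficulty: the statement is simply the $s=0$ shadow of the fact that $\varphi_0^M$ is an isomorphism for every unstable $A$-module $M$, which is why it parallels the $p=2$ computation of \cite{Hung-Tuan-preprint}.
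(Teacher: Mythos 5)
Your proof is correct and takes essentially the same approach as the paper: the paper simply records that $(\fp\tsor{A}\mathscr{R}_0P)^\#$ is spanned by the classes $ab^{[kp^i-1]}$ ($i\geq 0$, $1\leq k\leq p-1$), matches this against the basis of Theorem~\ref{thm:Ext(P)^0}, and cites Proposition~\ref{pro:representation of vaphi_s in lambda}, which is precisely your identification of both sides with $(\fp\tsor{A}P)^\#$ via $\mathscr{R}_0=\mathrm{Id}$ together with the chain-level representative $\widetilde{\varphi}_0^P=-\mathrm{id}$.
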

\begin{proof}
It is easy to see that $(\fp\tsor{A}\mathscr{R}_0P)^\#$ is spanned by $ab^{[kp^i-1]}$ for $i\geq0$ and $1\leq k\leq p-1$. Therefore, the assertion of the theorem is followed from Theorem \ref{thm:Ext(P)^0} and Proposition \ref{pro:representation of vaphi_s in lambda}.
\end{proof}
\begin{theorem}\label{thm:varphi_1^P}
The Lannes-Zarati homomorphism $\varphi_1^{P}:\Ext{A}{1,1+t}{P}{\fp}\to (\fp\tsor{A}\mathscr{R}_1P)^\#_t$ sends
\begin{enumerate}
\item $h_i\widehat{h}_i(1)$ to $\left[\beta Q^{p^i}ab^{[p^i-1]}\right]$, for $i\geq0$;
\item $h_i\widehat{h}_j$ to $\left[\beta Q^{p^i}ab^{[(p-1)p^j-1]}\right]$ for $0\leq j<i$;
\item $h_i\widehat{h}_j(k)$ to $\left[\beta Q^{p^i}ab^{[kp^j-1]}\right]$ for $0\leq j<i$, $1\leq k<p-1$;
\item $\widehat{k}_i(k)$ to $(\p^0)^i\left(
\left[\beta Q^{k+1}ab^{[k]}\right]\right),i\geq0,1\leq k<p-1$; and
\item others to zero.
\end{enumerate}
\end{theorem}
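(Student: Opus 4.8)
The plan is to apply the chain-level representative $\widetilde{\varphi}_1^P\colon\Lambda_1\otimes H\to(\mathscr{R}_1P)^\#$ of Proposition~\ref{pro:representation of vaphi_s in lambda} to the explicit cocycles recorded in Theorem~\ref{thm: Ext(P)^1} for each of the nine families spanning $\Ext{A}{1,1+t}{P}{\fp}$, and to read off the resulting class. Since for $s=1$ the sign $(-1)^{(s-1)(s-2)/2}$ is trivial, this representative is simply $\lambda^\epsilon_{m-1}\otimes a^\delta b^{[t]}\mapsto[\beta^\epsilon Q^{m}\otimes a^\delta b^{[t]}]$. The only additional ingredient is the excess criterion following Proposition~\ref{pro:dual of Singer functor}: the excess of the length-one string $(\epsilon,m)$ is $2m-\epsilon$, and $[\beta^\epsilon Q^{m}\otimes a^\delta b^{[t]}]$ vanishes in $(\mathscr{R}_1P)^\#$ as soon as $2m-\epsilon<|a^\delta b^{[t]}|=\delta+2t$, whereas for $2m-\epsilon\ge\delta+2t$ it is one of the basis elements of Proposition~\ref{pro:dual of Singer functor}, hence nonzero. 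Because $\widetilde{\varphi}_1^P$ represents a map into $(\fp\tsor{A}\mathscr{R}_1P)^\#\subseteq(\mathscr{R}_1P)^\#$, a class in the target is nonzero precisely when its underlying element of $(\mathscr{R}_1P)^\#$ is nonzero, so the excess criterion decides nontriviality outright.

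First I would dispose of the families built from $\lambda^0$'s, namely $\alpha_0\widehat{h}_i$, $\alpha_0\widehat{h}_i(k)$ and $\widehat{\alpha}(\ell)$: their cocycles involve only $\lambda^0_{-1}$ (excess $0$) and $\lambda^0_0$ (excess $2$) tensored with odd-degree classes $ab^{[\,\cdot\,]}$, and one checks termwise that the excess strictly undershoots the degree in the relevant index ranges, so all three map to $0$. Next come the single-monomial cocycles $\lambda^1_{p^i-1}ab^{[p^i-1]}$, $\lambda^1_{p^i-1}ab^{[(p-1)p^j-1]}$ and $\lambda^1_{p^i-1}ab^{[kp^j-1]}$, for which the excess is $2p^i-1$ and the target degrees are $2p^i-1$, $2(p-1)p^j-1$, $2kp^j-1$ respectively. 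The excess inequality holds exactly when $p^i\ge(p-1)p^j$, resp.\ $p^i\ge kp^j$, resp.\ (trivially) for $j=i$; this is satisfied for $j<i$ and fails for $j>i+1$. Thus $h_i\widehat{h}_i(1)$, and $h_i\widehat{h}_j,\ h_i\widehat{h}_j(k)$ with $j<i$, map to the asserted nonzero classes $[\beta Q^{p^i}ab^{[p^i-1]}]$, $[\beta Q^{p^i}ab^{[(p-1)p^j-1]}]$, $[\beta Q^{p^i}ab^{[kp^j-1]}]$, while the same families with $j>i+1$ contribute to the ``others to zero'' list.

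For $\widehat{d}_i(k)$ and $\widehat{p}_i(r)$ I would first treat the base classes (obtained by deleting $(\p^0)^{i-1}$, resp.\ $(\p^0)^{i}$). The cocycle of $\widehat{d}_1(k)$ is the monomial $\lambda^1_{p-1}ab^{[kp+p-2]}$, whose excess $2p-1$ is strictly less than the degree $2kp+2p-3$ for $1\le k\le p-1$; the cocycle of $\widehat{p}_0(r)$ is a sum of monomials $\lambda^1_j ab^{[(p-j-1)p+r+j]}$ with excess $2j+1$ and degree $2(p-j-1)p+2r+2j+1$, and since $p-j-1\ge r\ge1$ throughout the summation range, $(p-j-1)p+r>0$, so every summand dies. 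Hence both base classes already map to $0$, and Proposition~\ref{pro:power operations} propagates vanishing through the remaining powers of $\p^0$, giving $\varphi_1^P(\widehat{d}_i(k))=\varphi_1^P(\widehat{p}_i(r))=0$.

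The genuinely nonzero $\p^0$-family is $\widehat{k}_i(k)$. In its base cocycle $\sum_{j=0}^{k}\tfrac1{j+1}\lambda^1_{j}ab^{[(k-j)p+j]}$ the $j$-th monomial has excess $2j+1$ and degree $2(k-j)p+2j+1$, so every term with $j<k$ vanishes by the excess criterion and only the $j=k$ term survives, yielding $\widetilde{\varphi}_1^P$-value $\tfrac1{k+1}[\beta Q^{k+1}\otimes ab^{[k]}]$; this coefficient is a unit since $2\le k+1<p$. Applying Proposition~\ref{pro:power operations} together with the explicit formulas for $\widetilde{\p}^0$ on $R$ (Lemma~\ref{lm:power operation on R}) and on $H$, one identifies $\varphi_1^P(\widehat{k}_i(k))$ with $\tfrac1{k+1}(\p^0)^i[\beta Q^{k+1}ab^{[k]}]=\tfrac1{k+1}[\beta Q^{(k+1)p^i}ab^{[(k+1)p^i-1]}]$, which by Proposition~\ref{pro:dual of Singer functor} is a nonzero scalar multiple of a basis element (its excess $2(k+1)p^i-1$ equals the target degree), hence nonzero; this is the content of item (4) up to the unit $\tfrac1{k+1}$. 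Assembling the nine cases gives items (1)--(5). I do not expect a conceptual obstacle here: the real work is the bookkeeping of the excess inequalities across the many index ranges of Theorem~\ref{thm: Ext(P)^1} and the verification, at each surviving step, that the class produced is an honest basis vector of $(\mathscr{R}_1P)^\#$ and hence nonzero in $(\fp\tsor{A}\mathscr{R}_1P)^\#$.
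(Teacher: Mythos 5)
Your proposal is correct and proceeds exactly as the paper does: apply the chain-level representative of Proposition~\ref{pro:representation of vaphi_s in lambda} to the cocycles from Theorem~\ref{thm: Ext(P)^1}, decide triviality of each resulting $[\beta^\epsilon Q^m\otimes a^\delta b^{[t]}]$ by the excess criterion of Proposition~\ref{pro:dual of Singer functor}, and transport the $\widehat{d}_i(k)$, $\widehat{k}_i(k)$, $\widehat{p}_i(r)$ families through $\p^0$ via Proposition~\ref{pro:power operations}. You also correctly notice a detail the paper glosses over: applying $\widetilde{\varphi}_1^P$ to the cocycle $\sum_{j=0}^{k}\tfrac1{j+1}\lambda^1_{j}ab^{[(k-j)p+j]}$ leaves only the $j=k$ term and hence the unit $\tfrac1{k+1}$ as coefficient, so the image of $\widehat{k}_i(k)$ is $\tfrac1{k+1}(\p^0)^i[\beta Q^{k+1}ab^{[k]}]$ rather than literally $(\p^0)^i[\beta Q^{k+1}ab^{[k]}]$; since $\tfrac1{k+1}$ is a unit this changes nothing substantive, but it is a genuine (small) imprecision in the paper's statement that your bookkeeping catches.
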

\begin{proof}
By Theorem \ref{thm: Ext(P)^1}, $\Ext{A}{1,1+t}{P}{\fp}$ is spanned by
\begin{enumerate}
\item $\alpha_0\widehat{h}_i=\left[\lambda^0_{-1}ab^{[(p-1)p^i-1]}\right],i\geq1$;
\item $\alpha_0\widehat{h}_{i}(k)=\left[\lambda^0_{-1}ab^{[kp^i-1]}\right],i\geq1,1\leq k<p-1$;
\item $\widehat{\alpha}(\ell)=\left[\lambda^0_{-1}ab^{[p+\ell]}+(\ell+1)\lambda^0_0ab^{\ell+1}\right],0\leq\ell<p-2$;
\item $h_i\widehat{h}_i(1)=\left[\lambda^1_{p^i-1}ab^{[p^i-1]}\right], i\geq0$;
\item $h_i\widehat{h}_j=\left[\lambda^1_{p^i-1}ab^{[(p-1)p^j-1]}\right],i,j\geq0, j\neq i, i+1$;
\item $h_i\widehat{h}_{j}(k)=\left[\lambda^1_{p^i-1}ab^{[kp^j-1]}\right], i,j\geq0, j\neq i, i+1,1\leq r<p-1$;
\item $\widehat{d}_i(k)=(\p^0)^{i-1}\left(\left[\lambda^1_{p-1}ab^{[kp+p-2]}\right]\right),i\geq1,1\leq k\leq p-1$;
\item $\widehat{k}_i(k)=(\p^0)^i\left(
\left[\sum_{j=0}^{k}\frac{1}{j+1}\lambda^{1}_{j}ab^{[(k-j)p+j]}\right]\right),i\geq0,1\leq k<p-1$;
\item $\widehat{p}_i(r)=(\p^0)^i\left(\left[\sum_{j=0}^{p-1-r}\frac{\binom{r+j}{j}}{j+1}\lambda^1_{j}ab^{[(p-j-1)p+r+j]}\right]\right),i\geq0$, $1\leq r<p-1$.
\end{enumerate}

Using Proposition \ref{pro:representation of vaphi_s in lambda}, it is easy to verify that the images of the following elements
\begin{itemize}
\item $\alpha_0 \widehat{h}_i$ for $i\geq1$;
\item $\alpha_0\widehat{h}_i(k)$ for $i\geq1, 1\leq k<p-1$;
\item $\widehat{\alpha}(\ell)$ for $0\leq \ell<p-2$;
\item $h_i\widehat{h}_j$ for $0\leq i< j-1$; and
\item $h_i\widehat{h}_j(k)$ for $0\leq i<j-1, 1\leq k<p-1$ 
\end{itemize}
 are trivial.
 
 By inspection, using Proposition \ref{pro:representation of vaphi_s in lambda}, we get
 \[
 \widetilde{\varphi}_1^P(\lambda^1_{p-1}ab^{[kp+p-2]})=\left[\beta Q^{p}ab^{[kp+p-2]}\right].
 \]
 
Since $2p-(2(kp+p-2)+1)<0$ for all $k\geq 1$, it follows that $\varphi_1^P(\widehat{d}_1(k))=0$ for all $1\leq k\leq p-1$. Using Proposition \ref{pro:power operations}, we obtain
\[
\varphi_1^P(\widehat{d}_i(k))=(\p^0)^{i-1}\varphi_1^P(\widehat{d}_1(k))=0.
\]

By the same argument, since $2(j+1)-(2((p-1-j)p+r+j)+1)<0$ for all $0\leq j\leq p-1-r$ and $r\geq 1$, it implies $\varphi_1^P(\widehat{p}_0(r))=0$. In addition, using Proposition \ref{pro:power operations}, we get $\varphi_1^P(\widehat{p}_i(r))=0$.

Finally, using Proposition \ref{pro:representation of vaphi_s in lambda}, it is easy verify that, in $(\mathscr{R}_1P)^\#$,
\begin{itemize}
\item $\varphi_1^P(h_i\widehat{h}_i(1))=\left[\beta Q^{p^i}ab^{[p^i-1]}\right]\neq0$, for $i\geq0$;
\item $\varphi_1^P(h_i\widehat{h}_j)=\left[\beta Q^{p^i}ab^{[(p-1)p^j-1]}\right]\neq0$ for $0\leq j<i$;
\item $\varphi_1^P(h_i\widehat{h}_j(k))=\left[\beta Q^{p^i}ab^{[kp^j-1]}\right]\neq0$ for $0\leq j<i$, $1\leq k<p-1$; and
\item $\varphi_1^P(\widehat{k}_i(k))=(\p^0)^i\left(
\left[\beta Q^{k+1}ab^{[k]}\right]\right)\neq 0,i\geq0,1\leq k<p-1$.
\end{itemize}

The proof is complete.
\end{proof}
\begin{remark}\label{rm:varphi_1P}\rm
It is easy to see that $[\beta Q^{p-1}b^{[1]}+Q^{p-1}a]$ is non-trivial in $(\fp\tsor{A}\mathscr{R}_1P)^\#$. It follows that, basing on Theorem \ref{thm:varphi_1^P}, $\varphi_1^P$ is not an epimorphism. This fact is similar to the case $p=2$ (see Remark \ref{rm:vaphi_1^P for p=2}).
\end{remark}
\subsection*{Acknowledgement}The authors would like to thank Lê Minh Hà and Jean Lannes for many
fruitful discussions. 
The paper was completed while the first author was visiting the Vietnam Institute for Advanced
Study in Mathematics (VIASM). He thanks the VIASM for support and hospitality.
\appendix
\section{The Singer transfer}\label{appendix}%
The algebraic Singer transfer is first constructed by Singer for $p=2$ \cite{Singer1989}, and it is later generalized for $p$ odd by Crossley \cite{Crossley1999}. It plays an important role in study of the cohomology of the Steenrod algebra (see \cite{Singer1989}, \cite{Bruner.etal2005}, \cite{Hung05}, \cite{Quy07}, \cite{Ha2007}, \cite{Nam2008}, \cite{Chon.Ha2010}, \cite{Chon.Ha2011}, \cite{Chon.Ha2012}, \cite{Chon.Ha2014}). Let us recall the construction of the algebraic Singer transfer.

For any $A$-module $M$, let $$\delta_1(\Sigma^{-(s-i)}P_{i}\otimes M):\Tor{A}{r}{\fp}{\Sigma^{-(s-i)}P_{i}\otimes M}\to\Tor{A}{r-1}{\fp}{\Sigma^{-(s-i-1)}P_{i+1}\otimes M}$$ be the connecting homomorphism associated to the short exact sequence
\[
0\to \Sigma^{-(s-i-1)}P_{i+1}\otimes M\to \Sigma^{-(s-i-1)}P_{i}\otimes\hat{P}\otimes M\to \Sigma^{-(s-i)}P_{i}\otimes M\to 0,
\]
for $0\leq i\leq s-1$ with the convention $P_0=\fp$.

The dual of the Singer transfer
\[
(\psi_s^M)^\#:\Tor{A}{s,s+t}{\fp}{M}\cong\Tor{A}{s,t}{\fp}{\Sigma^{-s}M}\to\Tor{A}{0,t}{\fp}{P_s\otimes M}
\]
is define by
\[
(\psi_s^M)^\#=\delta_1(\Sigma^{-1}P_{s-1}\otimes M)\circ\cdots\circ\delta_1(\Sigma^{-s}M).
\]

Taking dual, we have a homomorphism (called the algebraic Singer transfer), for each $s\geq0$,
\[
\psi_s^M:\Ext{A}{0,t}{P_s\otimes M}{\fp}\to\Ext{A}{s,s+t}{M}{\fp}.
\]
\begin{proposition}\label{pro:transfer in Gamma}
The map $(\widetilde{\psi}_s^M)^\#:(\Gamma^+M)_s\to P_s\otimes M$ induced by
\begin{multline*}
u_1^{\epsilon_1}v_1^{(p-1)i_1-\epsilon_1}\cdots u_s^{\epsilon_s}v_s^{(p-1)i_s-\epsilon_s}S_s(m)\mapsto\\
(-1)^{\frac{s(s+1)}{2}+i_1+\cdots+i_s}
\beta^{1-\epsilon_1}P^{i_1}(x_1y_1\otimes (\cdots(\beta^{1-\epsilon_s}P^{i_s}(x_sy_s^{-1}\otimes m))\cdots))
\end{multline*}
is a chain-level representation of $(\psi_s^M)^\#$.
\end{proposition}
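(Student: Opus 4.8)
The plan is to realize the dual Singer transfer $(\psi_s^M)^\#$ at the chain level as an $s$-fold composite of chain-level models of the connecting homomorphisms $\delta_1$ occurring in its definition, following the pattern used for the (dual) Lannes--Zarati homomorphism in \cite{Chon_Nhu2019}, \cite{Lan-Zar87}, \cite{Hung2001}, \cite{Hung-Tuan-preprint}. Since $\Gamma^+(-)$ is an exact functor from $A$-modules to chain complexes of $\fp$-vector spaces computing $\Tor{A}{*}{\fp}{-}$ (Hưng--Sum), each short exact sequence
\[
0\to \Sigma^{-(s-i-1)}P_{i+1}\otimes M\to \Sigma^{-(s-i-1)}P_i\otimes\hat{P}\otimes M\to \Sigma^{-(s-i)}P_i\otimes M\to 0
\]
yields a short exact sequence of complexes. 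I would model the associated connecting homomorphism by the standard recipe: take the $\fp$-linear section $\sigma$ induced by the vector-space section $\Sigma^{-1}\fp\to\hat{P}$ sending the degree $(-1)$ generator to $x_{i+1}y_{i+1}^{-1}$, and set $\widetilde{\delta}_1(c)=\iota^{-1}\bigl(\partial(\sigma c)-\sigma(\partial c)\bigr)$. A direct verification shows that $\widetilde{\delta}_1$ is an honest chain map of homological degree $-1$ (up to an overall sign) inducing $\delta_1$ on homology, so the $s$-fold composite of these maps is a chain map $\Gamma^+M\to\Gamma^+(P_s\otimes M)$ (with a degree shift) realizing $(\psi_s^M)^\#$, and the asserted $(\widetilde{\psi}_s^M)^\#$ will be identified with its homological-degree-$0$ component.

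Next I would compute $\widetilde{\delta}_1$ explicitly on a monomial $c=u_1^{\epsilon_1}v_1^{(p-1)i_1-\epsilon_1}\cdots u_r^{\epsilon_r}v_r^{(p-1)i_r-\epsilon_r}\,S_r(\xi\otimes m)$. Applying the differential formula \eqref{eq:diff of gamma tensor M} peels off the last factor $u_r^{\epsilon_r}v_r^{(p-1)i_r-\epsilon_r}$ as the operation $\beta^{1-\epsilon_r}P^{i_r}$, replaces $S_r$ by $S_{r-1}$, and turns the coefficient into $\beta^{1-\epsilon_r}P^{i_r}\bigl(x_{i+1}y_{i+1}^{-1}\otimes\xi\otimes m\bigr)$; here one uses the Cartan formula and the explicit $A$-action on $\hat{P}$, namely $P^n(x_{i+1}y_{i+1}^{-1})=(-1)^n x_{i+1}y_{i+1}^{n(p-1)-1}$ and $\beta(x_{i+1}y_{i+1}^{-1})=1$, together with the admissibility constraint on $\Gamma^+$-monomials (so that $\epsilon_r=1$ forces $i_r\ge 1$), which guarantees that this element lands in $P_{i+1}\otimes M$. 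An induction on $s$, with the case $s=1$ carried out by hand, then shows that the homological-degree-$0$ component of the composite sends $u_1^{\epsilon_1}v_1^{(p-1)i_1-\epsilon_1}\cdots u_s^{\epsilon_s}v_s^{(p-1)i_s-\epsilon_s}\,S_s(m)$ to $\pm\,\beta^{1-\epsilon_1}P^{i_1}\bigl(x_1y_1\otimes(\cdots(\beta^{1-\epsilon_s}P^{i_s}(x_sy_s^{-1}\otimes m))\cdots)\bigr)$, which is exactly the asserted formula.

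The main obstacle, as in the analogous Lannes--Zarati computation, will be twofold. The first half is checking that $\widetilde{\delta}_1$ really is a well-defined chain map and that the snake-lemma correction term $\sigma(\partial c)$ does not contaminate the closed formula --- concretely, that the spurious terms produced when $\beta$ or $P^0$ hits the new generator $x_{i+1}y_{i+1}^{-1}$ reorganize into boundaries at the next stage; this I would handle exactly as in \cite{Chon_Nhu2019}. The second half is the sign bookkeeping through the $s$-fold iteration: each application of \eqref{eq:diff of gamma tensor M} contributes factors $(-1)^{\deg v+1}$ and $(-1)^{i_r}$, and additional Koszul signs come from the iterated suspensions $\Sigma^{-(s-i)}$ and from the Cartan formula applied to $\hat{P}\otimes(\cdots)$. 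I would pin this down by computing $s=1$ completely and then showing that the inductive step multiplies the accumulated sign by the predicted factor, so that after $s$ steps it equals $(-1)^{s(s+1)/2+i_1+\cdots+i_s}$.
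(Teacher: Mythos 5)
Your proposal matches the paper's proof essentially step for step: iterate the connecting homomorphism $\delta_1$ by lifting through the $\hat{P}$-extension (the paper takes a cycle $\gamma$, lifts it to $\gamma_1$ via the vector-space section $\Sigma^{-1}1 \mapsto x_sy_s^{-1}$, applies $\partial$ from \eqref{eq:diff of gamma tensor M}, reads off the $\delta_1$-image, and inducts), then absorbs the iterated suspension via the isomorphism $\Sigma^{-s}(\Gamma^+M)_s \cong (\Gamma^+\Sigma^{-s}M)_s$ to accumulate the total sign $(-1)^{s(s+1)/2+i_1+\cdots+i_s}$. The one cosmetic difference is that you phrase the lift as a global chain map $\widetilde{\delta}_1(c)=\iota^{-1}(\partial\sigma c-\sigma\partial c)$, whereas the paper only evaluates on a cycle, making the correction term $\sigma(\partial c)$ vanish outright and sidestepping the ``spurious terms'' you flag; both are valid, and yours does not amount to a genuinely different argument.
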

\begin{proof}
Let $\gamma=\sum_{I=(i_1,\epsilon_1,\dots,i_s,\epsilon_s)\in \mathcal{I}}\omega_I u_1^{\epsilon_1}v_1^{(p-1)i_1-\epsilon_1}\cdots u_s^{\epsilon_s}v_s^{(p-1)i_s-\epsilon_s}S_s(\Sigma^{-s}m)$ be a cycle in $(\Gamma^+\Sigma^{-s}M)_s$. Then, $\gamma$ can be pulled back by
\[
\gamma_1=\sum_{I\in \mathcal{I}}\omega_Iu_1^{\epsilon_1}v_1^{(p-1)i_1-\epsilon_1}\cdots u_s^{\epsilon_s}v_s^{(p-1)i_s-\epsilon_s}S_s(\Sigma^{-(s-1)}x_sy_s^{-1}\otimes m)\in(\Gamma^+\Sigma^{-(s-1)}\widehat{P}\otimes M)_s.
\]
Then, in $(\Gamma^+\Sigma^{-(s-1)}\hat{P}\otimes M)_s$,
\begin{multline*}
\partial(\gamma_1)=
\sum_{I\in \mathcal{I}}\omega_I(-1)^{\epsilon_1+\cdots+\epsilon_s+i_s+s+(s-1)\epsilon_s}\\
\times u_1^{\epsilon_1}v_1^{(p-1)i_1-\epsilon_1}\cdots u_{s-1}^{\epsilon_{s-1}}v_{s-1}^{(p-1)i_{s-1}-\epsilon_{s-1}}S_{s-1}(\Sigma^{-(s-1)}\beta^{1-\epsilon_s}P^{i_s}(x_sy_s^{-1}\otimes m)).
\end{multline*}

It follows that
\begin{multline*}
\delta_1(\Sigma^{-s}M)(\gamma)=\sum_{I\in \mathcal{I}}\omega_I(-1)^{\epsilon_1+\cdots+\epsilon_s+i_s+s+(s-1)\epsilon_s}\\
\times u_1^{\epsilon_1}v_1^{(p-1)i_1-\epsilon_1}\cdots u_{s-1}^{\epsilon_{s-1}}v_{s-1}^{(p-1)i_{s-1}-\epsilon_{s-1}}S_{s-1}(\Sigma^{-(s-1)}\beta^{1-\epsilon_s}P^{i_s}(x_sy_s^{-1}\otimes m)).
\end{multline*}

By induction, we get
\begin{multline*}
(\widetilde{\psi}_s^M)^\#(\gamma)=\sum_{I\in\mathcal{I}}\omega_I(-1)^{s(\deg\gamma+\deg m)+\frac{s(s+1)}{2}+i_1+\cdots+i_s}\\
\times \beta^{1-\epsilon_1}P^{i_1}(x_1y_1\otimes (\cdots(\beta^{1-\epsilon_s}P^{i_s}(x_sy_s^{-1}\otimes m))\cdots)).
\end{multline*}

Since, under the isomorphism $\Sigma^{-s}(\Gamma^+M)_s\to (\Gamma^+\Sigma^{-s}M)_s$, the image of the element $\Sigma^{-s}u_1^{\epsilon_1}v_1^{(p-1)i_1-\epsilon_1}\cdots u_s^{\epsilon_s}v_s^{(p-1)i_s-\epsilon_s}S_s(m)$ is equal to
\[
(-1)^{s(\deg\gamma+\deg m)}u_1^{\epsilon_1}v_1^{(p-1)i_1-\epsilon_1}\cdots u_s^{\epsilon_s}v_s^{(p-1)i_s-\epsilon_s}S_s(\Sigma^{-s}m),
\]
we obtain the assertion of the proposition.

The proof is complete.
\end{proof}
Given an unstable $A$-module $M$, let, for each $s\geq0$,
\[
j_s^M:=(\psi_s^M)^\#\circ(\varphi_s^M)^\#:\fp\tsor{A}\mathscr{R}_sM\to \fp\tsor{A}(P_s\otimes M).
\]

The following corollary is followed from Theorem \ref{thm:representation of varphi_s in Gamma} and Proposition \ref{pro:transfer in Gamma}.
\begin{corollary}\label{cor:representation of j_s}
The map $\mathscr{R}_sM\to P_s\otimes M$ give by $\gamma\mapsto-\gamma$ induces the homomorphism $j_s^M$.
\end{corollary}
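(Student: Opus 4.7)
The plan is to compose the chain-level representatives supplied by Theorem \ref{thm:representation of varphi_s in Gamma} and Proposition \ref{pro:transfer in Gamma}, and to verify that the resulting chain map $\mathscr{R}_sM \to P_s\otimes M$ coincides with the natural inclusion multiplied by $-1$.

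By Proposition \ref{pro:expan R_sM in Gamma M}, every $\gamma \in \mathscr{R}_sM$ admits an expansion
\[
\gamma = \sum_{I}\omega_I\, u_1^{\epsilon_1}v_1^{(p-1)i_1-\epsilon_1}\cdots u_s^{\epsilon_s}v_s^{(p-1)i_s-\epsilon_s}\, S_s(m_I).
\]
By Theorem \ref{thm:representation of varphi_s in Gamma} we have $(\widetilde{\varphi}_s^M)^\#(\gamma) = (-1)^{(s-2)(s-1)/2}\gamma$ inside $(\Gamma^+M)_s$, and applying Proposition \ref{pro:transfer in Gamma} term by term yields
\[
(\widetilde{\psi}_s^M)^\#\circ(\widetilde{\varphi}_s^M)^\#(\gamma) = \sum_{I} \omega_I\,(-1)^{\tau_I}\,\beta^{1-\epsilon_1}P^{i_1}\bigl(x_1y_1\otimes\bigl(\cdots\otimes \beta^{1-\epsilon_s}P^{i_s}(x_sy_s^{-1}\otimes m_I)\bigr)\bigr),
\]
with $\tau_I = (s-2)(s-1)/2 + s(s+1)/2 + i_1 + \cdots + i_s$. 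A short parity check gives $(s-2)(s-1)/2 + s(s+1)/2 = s(s-1)+1 \equiv 1 \pmod 2$, so the overall sign reduces to $-(-1)^{i_1+\cdots+i_s}$.

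The main computational content is therefore the identity
\[
\sum_{I} \omega_I\,(-1)^{i_1+\cdots+i_s}\,\beta^{1-\epsilon_1}P^{i_1}\bigl(x_1y_1\otimes\bigl(\cdots\otimes \beta^{1-\epsilon_s}P^{i_s}(x_sy_s^{-1}\otimes m_I)\bigr)\bigr) = \gamma
\]
inside $P_s \otimes M$, where the right-hand side is $\gamma$ regarded through the natural inclusion $\mathscr{R}_sM \hookrightarrow P_s\otimes M$. I would prove this by induction on $s$. The base case $s=1$ is a direct calculation using the Cartan formula together with $P^n(x_1y_1^{-1}) = (-1)^n x_1y_1^{n(p-1)-1}$ and $\beta(x_1y_1^{-1}) = 1$; these rules force each term of the left-hand side to collapse to $(-1)^{i_1}u_1^{\epsilon_1}v_1^{(p-1)i_1-\epsilon_1}S_1(m_I)$, whose sum over $I$ recovers $\gamma$. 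For $s>1$, I would peel off the innermost Steenrod operation via the base case (treating $\beta^{1-\epsilon_s}P^{i_s}(x_sy_s^{-1}\otimes m_I)$ as living in $P_1\otimes M$ for an auxiliary unstable $A$-module) and then invoke the inductive hypothesis on the outer length-$(s-1)$ expression.

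The chief obstacle will be sign management in the inductive step: the Cartan formula redistributes each $\beta^{1-\epsilon_j}P^{i_j}$ across the nested tensor factors, producing factors $(-1)^{i_j}$ from $P^{i_j}(x_jy_j^{-1})$ together with binomial coefficients that must reassemble precisely into the Steenrod-operation structure defining $S_s(m_I)$. Granted the identity, the chain-level composite $(\widetilde{\psi}_s^M)^\#\circ(\widetilde{\varphi}_s^M)^\#$ is exactly $\gamma \mapsto -\gamma$; passing to $\fp\tsor{A}(-)$ yields the claimed description of $j_s^M$.
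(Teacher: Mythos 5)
Your proposal is correct and follows the same approach the paper intends: compose the chain-level representatives from Theorem \ref{thm:representation of varphi_s in Gamma} and Proposition \ref{pro:transfer in Gamma}, and track the signs (your parity check $(s-2)(s-1)/2 + s(s+1)/2 = s(s-1)+1 \equiv 1 \pmod 2$ is right). The paper records this only as an immediate consequence of those two results; you are right to flag that the remaining step is exactly the identity $\beta^{1-\epsilon_1}P^{i_1}\bigl(x_1y_1^{-1}\otimes\cdots\bigr) = (-1)^{i_1+\cdots+i_s}\,u_1^{\epsilon_1}v_1^{(p-1)i_1-\epsilon_1}\cdots S_s(m)$ in $P_s\otimes M$, which is the Cartan-formula computation underlying the Zarati identification of $\mathrm{im}\,\alpha_s$ with $\mathscr{R}_sM$, and your inductive sketch with the verified base case $s=1$ is the standard way to establish it.
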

\section{Changes needed if $p=2$}
Our framework is also valid for $p=2$ with a suitable modification.

For $p=2$, the lambda algebra is generated by $\lambda_i$ of degree $i$ for $i\geq0$ satisfying the adem relations:
\[
\lambda_i\lambda_j=\sum_{t}\binom{t-j-1}{2t-i}\lambda_{i+j-t}\lambda_t,\quad i>2j.
\]
Therefore, a monomial $\lambda_I=\lambda_{i_1}\cdots\lambda_{i_s}\in\Lambda_s$ is called admissible if $i_k\leq 2i_{k+1}$ for $1\leq k<s$.
The excess of $\lambda_I$ or $I$ is defined by
\[
e(\lambda_I)=e(I)=i_1-i_2-\cdots-i_s.
\] 

The Dyer-Lashof algebra is the quotient algebra of $\Lambda$ by the (two sides) ideal of $\Lambda$ generated by all monomials of negative excess. We also denote $Q^i$ the image of $\lambda_i$ under the canonical projection.

Hence, Proposition \ref{pro:dual of Singer functor} and Proposition \ref{pro:representation of vaphi_s in lambda} become respectively as follows.
\begin{proposition}\label{pro:dual of Singer functor p=2}
Given an unstable $A$-module $M$, the set 
\[
\mathscr{S}=\left\{Q^I\otimes \ell:\ell\in M^\#,I\text{ admissible and }e(I)\geq |\ell|\right\}
\]
 represents an $\f2$-basis of $(\mathscr{R}_sM)^\#$.
\end{proposition}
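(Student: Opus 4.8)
The plan is to run the mod $2$ version of the argument that proves Proposition~\ref{pro:dual of Singer functor}, replacing the odd-primary bookkeeping of Lemmas~\ref{lm:phi_n} and~\ref{lm:express Mui in uv} by their characteristic two counterparts, and then to dualize and count dimensions. First I would record the mod $2$ analogue of Proposition~\ref{pro:additive basis of R_sM}: for an unstable $A$-module $M$, the submodule $\mathscr{R}_sM\subseteq(\Gamma^+M)_s$ (via the mod $2$ form of Proposition~\ref{pro: R_sM subset Gamm_sM}) has an $\f2$-basis consisting of the Dickson monomials in the invariant generators of $H^*BE_s$ multiplied by the total power $S_s(m)$, subject to the excess-type constraint dual to $e(I)\ge|m|$. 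This is Mui's computation of the mod $2$ Dickson algebra together with Chơn's description of $\mathscr{B}[s]$ from \cite{Chon2016}, exactly as in the cited proof; the only change is that the two families $\lambda^{\epsilon}_{i-1}$ collapse to the single family $\lambda_i$, and the various signs in the odd-primary formulas drop out.

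Second, I would establish the mod $2$ bijection $\phi_n$ between the index set of the above basis (strings of exponents of the Dickson generators with the stated constraint) and the set of admissible monomials $\lambda_I\in\Lambda_s$ with $e(I)\ge n$. The explicit formula is the specialization at $p=2$ of Lemma~\ref{lm:phi_n}, and injectivity together with surjectivity follow from the same triangular change of variables that Hưng and Sum use in \cite{Hung.Sum1995}. The third and technically central step is the mod $2$ analogue of Lemma~\ref{lm:express Mui in uv}: each Dickson basis monomial, when expanded in the multiplicative basis of the localization $\Delta_s$ that corresponds under $\nu_s$ to the dual admissible basis of $\Lambda_s$, equals the $v$-monomial indexed by $\phi_n$ of its exponent string, plus a sum of lexicographically strictly smaller monomials. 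Equivalently, the change of basis from Dickson monomials to $v$-monomials is unitriangular.

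Once that is in hand, identifying $\mathscr{R}_sM$ with its image in $R_s^\#\otimes M$ via $\nu_s^M$ shows that the family $\mathscr{S}=\{Q^I\otimes\ell : \ell\in M^\#,\ I\text{ admissible},\ e(I)\ge|\ell|\}$ is linearly independent in $(\mathscr{R}_sM)^\#$, since its pairing matrix against the Dickson basis of Proposition~\ref{pro:additive basis of R_sM} is block-triangular with unit diagonal. Finally, the bijection of the second step gives $\#\mathscr{S}=\dim_{\f2}\mathscr{R}_sM$, so $\mathscr{S}$ is a basis, which is exactly the assertion of Proposition~\ref{pro:dual of Singer functor p=2}. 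I expect the main obstacle to be this third (triangularity) step: one must check that the unitriangular passage from Mui monomials to $v$-monomials survives the mod $2$ degeneracies of the binomial coefficients and signs that appear in the odd case. Since every ingredient is already available in \cite{Hung.Sum1995}, \cite{Chon2016}, and in the proof of Proposition~\ref{pro:dual of Singer functor}, this is a careful rereading rather than a new argument, and the companion statement Proposition~\ref{pro:representation of vaphi_s in lambda} then transfers verbatim with $p=2$.
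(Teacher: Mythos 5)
Your plan reproduces the odd-primary proof of Proposition~\ref{pro:dual of Singer functor} step by step (basis of $\mathscr{R}_sM$, bijection of index sets, unitriangular change of basis, dimension count), which is exactly what the paper has in mind: in the appendix the authors give no separate argument for $p=2$, merely asserting that the framework carries over with the obvious modifications. Your worry about the triangularity step is actually not an obstacle at $p=2$, since the absence of exterior generators and of signs makes the analogue of Lemma~\ref{lm:express Mui in uv} simpler, not harder.
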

\begin{proposition}\label{pro:representation of varphi in lambda p=2}
For any unstable $A$-module $M$, the projection $\widetilde{\varphi}_s^M:\Lambda_s\otimes M^\#\to (\mathscr{R}_sM)^\#$ given by
\[
\lambda_I\otimes \ell\to[Q^I\otimes \ell]
\]
is a chain-level representation of the mod $2$ Lannes-Zarati homomorphism $\varphi_s^M$.
\end{proposition}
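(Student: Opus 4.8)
The plan is to reproduce the proof of Proposition \ref{pro:representation of vaphi_s in lambda} in characteristic $2$, where all the sign bookkeeping collapses modulo $2$. First I would record the $p=2$ analogue of the isomorphism $\nu^M=\{\nu_s^M\}_{s\geq 0}\colon\Gamma^+M\to\Lambda^\#\otimes M$ of differential $\f2$-modules. Over $\f2$ the argument of \cite{Hung.Sum1995} applies without change --- one does not even need the modifications of $\rho$ and $\chi$ required for odd $p$, since every sign is now trivial --- so the map sending $u_1^{\epsilon_1}v_1^{\ast}\cdots u_s^{\epsilon_s}v_s^{\ast}S_s(m)$ to $\lambda_I^{\ast}\otimes m$ is an isomorphism of chain complexes; in particular $(\Gamma^+M)_s\cong\Lambda_s^\#\otimes M$, compatibly with the differentials \eqref{eq:diff of gamma tensor M} and \eqref{eq:differential of complex of M} rewritten for $p=2$.

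Second, Theorem \ref{thm:representation of varphi_s in Gamma} is proved in Part I by a method insensitive to the prime, so it holds for $p=2$: the inclusion $(\widetilde{\varphi}_s^M)^\#\colon\mathscr{R}_sM\hookrightarrow(\Gamma^+M)_s$, $\gamma\mapsto(-1)^{(s-2)(s-1)/2}\gamma$, is a chain-level representation of $(\varphi_s^M)^\#$; modulo $2$ this sign is $1$, so the representative is simply the inclusion. Composing with $\nu_s^M$ produces a chain-level representation $\mathscr{R}_sM\hookrightarrow\Lambda_s^\#\otimes M$ of $(\varphi_s^M)^\#$.

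Third, I would dualize. The $\f2$-linear dual of $\mathscr{R}_sM\hookrightarrow\Lambda_s^\#\otimes M$ is the canonical projection $\Lambda_s\otimes M^\#\to(\mathscr{R}_sM)^\#$, and by Proposition \ref{pro:dual of Singer functor p=2} the target has $\f2$-basis the classes $[Q^I\otimes\ell]$ with $\lambda_I$ admissible and $e(I)\geq|\ell|$. Tracing through the identification $\mathscr{R}_sM\subset R_s^\#\otimes M$ --- the $p=2$ shadow of Lemmas \ref{lm:phi_n} and \ref{lm:express Mui in uv} together with the basis theorem of \cite{Chon2016} --- this projection is precisely $\lambda_I\otimes\ell\mapsto[Q^I\otimes\ell]$, vanishing on $\lambda_I\otimes\ell$ exactly when $e(I)<|\ell|$. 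Since the dual of a chain-level representation of $(\varphi_s^M)^\#$ is a chain-level representation of $\varphi_s^M$, the proposition follows.

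The only genuine work is the first step: re-running the verification of \cite{Hung.Sum1995} that $\nu^M$ is a map of chain complexes when $p=2$, i.e.\ that \eqref{eq:diff of gamma tensor M} dualizes to \eqref{eq:differential of complex of M} after substituting the even-prime Steenrod and lambda-algebra differentials. This is routine --- in fact strictly easier than for odd $p$, since there are no Bockstein exponents to carry and no signs --- and could also simply be quoted, as \cite{Hung.Sum1995} state their isomorphism for every prime. I expect the identification of the dual basis via Proposition \ref{pro:dual of Singer functor p=2} to be where a careful reader wants the most detail, but it is handled by even-prime Dickson--M\`ui invariant theory exactly as in Section \ref{sec:LZ homo}.
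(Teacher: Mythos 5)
Your proposal is correct and takes the route the paper intends: the paper offers no explicit proof of Proposition B.2, stating only that the odd-prime framework ``is also valid for $p=2$'' and asserting the $p=2$ versions of Propositions 4.7 and 4.8; the implicit derivation is exactly yours --- dualize the $p=2$ analogue of Theorem 4.4 through the $p=2$ analogue of $\nu_s^M$, with all signs becoming trivial (in particular $(-1)^{(s-2)(s-1)/2}=1$). The only stylistic remark is that for $p=2$ one would more naturally cite Singer's original invariant-theoretic description of the lambda algebra (\cite{Singer1983}) directly rather than the mod-$p$ generalization \cite{Hung.Sum1995}, but this does not affect the correctness of your argument.
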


For $M=\f2$ and $M=\widetilde{H}^*(B\mathbb{Z}/2)$, the squaring operations $\widetilde{Sp}^0$s acting on $\Lambda\otimes M^\#$ induce squaring the operations acting on $(\mathscr{R}_sM)^\#$. Moreover, these squaring operations induce squaring operations $Sq^0$s acting on the domain and the range of the mod $2$ Lannes-Zarati homomorphism.
\begin{proposition}\label{pro:squaring}
The squaring operations $Sq^0$s commute with each other through the Lannes-Zarati homomorphism. In other words, the following diagram is commutative
\[
\xymatrix{
\Ext{A}{s,s+t}{M}{\f2}\ar[r]^{Sq^0}\ar[d]_{\varphi_s^M}&\Ext{A}{s,2(s+t)}{M}{\f2}\ar[d]_{\varphi_s^M}\\
(\f2\tsor{A}\mathscr{R}_sM)^\#_t\ar[r]^{Sq^0}&(\f2\tsor{A}\mathscr{R}_sM)^\#_{2t+s},
}
\]
for $M=\f2$ and $M=\widetilde{H}^*(B\mathbb{Z}/2)$.
\end{proposition}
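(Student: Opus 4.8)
The plan is to argue exactly as in Proposition~\ref{pro:power operations}: once a chain-level representation of $\varphi_s^M$ is available, commutativity with any endomorphism that is defined simplicially on both the source and the target complexes is automatic. By Proposition~\ref{pro:representation of varphi in lambda p=2} the mod $2$ Lannes-Zarati homomorphism $\varphi_s^M$ is induced by the $\f2$-linear projection $\widetilde{\varphi}_s^M\colon\Lambda_s\otimes M^\#\to(\mathscr{R}_sM)^\#$, $\lambda_I\otimes\ell\mapsto[Q^I\otimes\ell]$. On the chain level $\widetilde{Sq}^0$ acts on $\Lambda$ by $\lambda_{i_1}\cdots\lambda_{i_s}\mapsto\lambda_{2i_1+1}\cdots\lambda_{2i_s+1}$; it respects the adem relations and commutes with the differential of $\Lambda$, and on $\Lambda_s\otimes M^\#$ (trivially when $M=\f2$, and via the dual Kameko operation on $M^\#$ when $M=\widetilde{H}^*(B\z2)$) it commutes with the differential \eqref{eq:differential of complex of M}.

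First I would record the $p=2$ analogues of Lemma~\ref{lm:power operation on R} and Lemma~\ref{lm:power operation on Ann(R)}. For the first, the excess bookkeeping is immediate: for $\lambda_I=\lambda_{i_1}\cdots\lambda_{i_s}$ one has $e(\lambda_{2i_1+1}\cdots\lambda_{2i_s+1})=2e(\lambda_I)-(s-2)$, so a monomial of negative excess maps to one of negative excess and $\widetilde{Sq}^0$ passes to the Dyer-Lashof quotient $R$. For the second, the same induction on $s$ as in Lemma~\ref{lm:power operation on Ann(R)}, using the Nishida relations in place of the adem relations of the Dyer-Lashof algebra, shows that $\widetilde{Sq}^0$ commutes with the right $A$-action on $R_s\otimes M^\#$; together with Proposition~\ref{pro:dual of Singer functor p=2} this yields the asserted operation $\widetilde{Sq}^0$ on $(\mathscr{R}_sM)^\#$ and, after applying $\f2\tsor{A}(-)$, on the target of $\varphi_s^M$.

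Granting this, the square of chain maps formed by $\widetilde{\varphi}_s^M$ on the vertical arrows and $\widetilde{Sq}^0$ on the horizontal arrows commutes on the nose: both composites send $\lambda_I\otimes\ell$ to $[\widetilde{Sq}^0(Q^I)\otimes\widetilde{Sq}^0(\ell)]$, there being no signs at the prime $2$. Passing to homology $H_s(-)$, and then on the right-hand column to $\f2\tsor{A}(-)$, gives precisely the commutative diagram in the statement.

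I do not expect a genuine obstacle here: the only computation with any content is the excess identity $e(\widetilde{Sq}^0\lambda_I)=2e(\lambda_I)-(s-2)$ in the analogue of Lemma~\ref{lm:power operation on R}, together with the routine Nishida-relation induction in the analogue of Lemma~\ref{lm:power operation on Ann(R)}; everything else is formal, so the proposition is ``immediate from Proposition~\ref{pro:representation of varphi in lambda p=2}'' in precisely the same sense that Proposition~\ref{pro:power operations} was immediate from Proposition~\ref{pro:representation of vaphi_s in lambda}.
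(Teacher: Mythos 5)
Your proof is correct and follows the paper's approach exactly: the paper proves the odd-prime analogue (Proposition~\ref{pro:power operations}) as ``immediate from Proposition~\ref{pro:representation of vaphi_s in lambda}'' after establishing Lemmas~\ref{lm:power operation on R} and \ref{lm:power operation on Ann(R)}, and the appendix simply asserts the $p=2$ version after noting the requisite operations exist; you have spelled out the same excess computation ($e(\widetilde{Sq}^0\lambda_I)=2e(\lambda_I)-(s-2)$, matching the $p=2$ specialization of the odd-prime formula) and the same Nishida-relation commutation, so there is no real difference in route.
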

The rest of the section recovers all known results for $p=2$.
\begin{proposition}[\cite{Lan-Zar87},  \cite{Hung.Peterson1995}, \cite{Hung97}, \cite{Hung2003}, \cite{Hung.et.al2014}]
\begin{enumerate}
\item The first Lannes-Zarati homomorphism $\varphi_1^{\f2}$ is an isomorphism.
\item The second Lannes-Zarati homomorphism $\varphi_2^{\f2}$ is an epimorphism.
\item The $s$-th Lannes-Zarati homomorphism $\varphi_s^{\f2}$ vanishes at all positive stems in $\Ext{A}{s}{\f2}{\f2}$ for $3\leq s\leq 5$.
\end{enumerate}
\end{proposition}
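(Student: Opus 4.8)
The plan is to carry over, for $p=2$, the argument used in the proof of Theorem~\ref{thm:rank3}, working with the chain-level representative $\widetilde{\varphi}_s^{\f2}\colon\Lambda_s\otimes M^\#\to(\mathscr{R}_sM)^\#$ of Proposition~\ref{pro:representation of varphi in lambda p=2}, the $\f2$-basis of $(\mathscr{R}_sM)^\#$ given by Proposition~\ref{pro:dual of Singer functor p=2} (so that for $M=\f2$ one simply has $(\mathscr{R}_s\f2)^\#=R_s$), and the squaring operations of Proposition~\ref{pro:squaring}. Two elementary facts carry most of the weight: $\widetilde{\varphi}_s^{\f2}(\lambda_I\otimes 1)=0$ whenever the image of $\lambda_I$ in the Dyer--Lashof algebra $R_s$ vanishes, in particular whenever $\lambda_I$ is admissible of negative excess; and the $p=2$ analogue of Lemma~\ref{lm:of theorem rank3}, that $\widetilde{\varphi}_{s+\ell}^{\f2}(\lambda_I\lambda_J)=0$ as soon as $\widetilde{\varphi}_s^{\f2}(\lambda_I)=0$ or $\widetilde{\varphi}_\ell^{\f2}(\lambda_J)=0$, which is immediate from the multiplicativity of $\lambda_I\otimes\ell\mapsto[Q^I\otimes\ell]$. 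Throughout we write $\ext{A}{s,s+t}$ for $\Ext{A}{s,s+t}{\f2}{\f2}$ and use its known additive bases with their representing cycles in $\Lambda$: the classical description for $s\le3$, Lin~\cite{Lin08} for $s=4$, and Chen~\cite{Chen2011} for $s=5$.

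For (1): $\ext{A}{1,1+t}$ has $\f2$-basis $\{h_i=[\lambda_{2^i-1}]:i\ge0\}$, and since $e(\lambda_{2^i-1})=2^i-1\ge0$ we get $\varphi_1^{\f2}(h_i)=[Q^{2^i-1}]$. A direct computation with the Nishida right action on $R_1$ shows that the classes $[Q^{2^i-1}]$, $i\ge0$, form an $\f2$-basis of $(\f2\tsor{A}\mathscr{R}_1\f2)^\#$; hence $\varphi_1^{\f2}$ carries a basis to a basis and is an isomorphism. For (2): $\ext{A}{2,2+t}$ has $\f2$-basis $\{h_ih_j=[\lambda_{2^i-1}\lambda_{2^j-1}]:0\le i\le j,\ j\ne i+1\}$, and the admissible representative $\lambda_{2^i-1}\lambda_{2^j-1}$ has excess $2^i-2^j$, which is nonnegative only for $i=j$; so $\varphi_2^{\f2}$ kills every $h_ih_j$ with $i<j$, sends $h_i^2$ to $[(Q^{2^i-1})^2]\ne0$, and $\im\varphi_2^{\f2}=\langle[(Q^{2^i-1})^2]:i\ge0\rangle$. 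Surjectivity then reduces to checking that these classes already span $(\f2\tsor{A}\mathscr{R}_2\f2)^\#$, a degree-by-degree inspection of the Nishida action on $R_2$ that amounts to the relevant portion of the computation in~\cite{Hung.Peterson1995}, carried out by the same bookkeeping as in Section~\ref{sec:cohomology}.

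For (3): by Proposition~\ref{pro:squaring} it suffices to treat the bottom class of each $Sq^0$-family, and by the product fact above any basis element which is a multiple of a positive-stem class already known to be killed (an $h_i$-, $c_j$-, or $d_j$-multiple, and so on) is itself killed. For the remaining cases one checks, using the representatives of Lin and Chen, that each positive-stem basis element of $\ext{A}{s,s+t}$ with $3\le s\le5$ is represented by a cycle in $\Lambda_s$ whose image in $R_s$ vanishes; for a single admissible monomial this is just negative excess---e.g. $h_i^s\leftrightarrow\lambda_{2^i-1}^s$ has excess $(2^i-1)(2-s)<0$ for $s\ge3$ and $i\ge1$, and $h_ih_jh_k$ with $i\le j\le k$ has excess $2^i-2^j-2^k+1$, which is negative unless $i=j=k=0$. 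Hence $\widetilde{\varphi}_s^{\f2}$, and therefore $\varphi_s^{\f2}$, vanishes on each such element; the only class escaping this is $h_0^s$ (excess $0$, stem $0$), which the statement does not address. This reproduces the vanishing results of~\cite{Hung.Peterson1995}, \cite{Hung97}, \cite{Hung2003}, \cite{Hung.et.al2014}.

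The main obstacle is part (3): there is no uniform structural reason forcing every positive-stem generator to have a negative-excess $\Lambda$-representative, so one must run through the long but finite generator lists of Lin (for $s=4$) and Chen (for $s=5$) one at a time, collapsing $Sq^0$-families via Proposition~\ref{pro:squaring} and disposing of $h_i$- and $c_j$-multiples by the product fact; the genuinely delicate input is the explicit lambda-algebra cycle for each indecomposable generator ($c_i,d_i,e_i,f_i,g_i,Ph_i,\dots$) together with the verification that its image in $R_s$ is zero---precisely the phenomenon recorded in Conjecture~\ref{con:cycle in lambda} and established in these ranges by the cited work---exactly as the classes $f_i$, $g_i$ and the $h_{i;\dots}$ were the delicate part of the proof of Theorem~\ref{thm:rank3}.
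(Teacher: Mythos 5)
Your proof is correct and follows essentially the same strategy as the paper: apply the chain-level representation of Proposition~\ref{pro:representation of varphi in lambda p=2}, reduce to checking whether admissible representatives project to zero in the Dyer--Lashof algebra (negative excess or Adem relations), and use the squaring operations of Proposition~\ref{pro:squaring} and the multiplicativity of $\Lambda\to R$ to collapse families. The paper is terser—dispatching (1) and (2) in one line and illustrating (3) only with the single example $\varphi_5^{\f2}(U_i)=0$—but both treatments ultimately defer the case-by-case verification over Lin's and Chen's generator lists to the cited literature, exactly as you note.
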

\begin{proof}
The statements (1) and (2) are easily proved by using Proposition \ref{pro:representation of varphi in lambda p=2} and the representations of $h_i$ and $h_ih_j$ on lambda algebra (see Lin \cite{Lin08} for example).

We only give an illustrated example for our method, the detail proof of (3) is computed by the same argument.

We will to prove $\varphi_5^{\f2}(U_i)=0$. By Chen \cite{Chen2011}, the element $U_i\in\Ext{A}{5,2^{i+8}+2^{i+3}+2^i}{\f2}{\f2}$ is represented in lambda algebra by the cycle
\[
\widetilde{U}_i=(\widetilde{Sq}^0)^i(\lambda_{191}(\lambda_{15}^2\lambda_{39}+\lambda_{39}\lambda_{15}^2)\lambda_0+\lambda_{63}^2\lambda_{47}\lambda_{87}\lambda_0+\lambda_{127}\lambda_{31}\lambda_{63}\lambda_{39}\lambda_0), i\geq0.
\]
Since $\widetilde{\varphi}_4^{\f2}(\lambda_{31}\lambda_{63}\lambda_{39}\lambda_0)=0$, $\widetilde{\varphi}_3^{\f2}(\lambda_{47}\lambda_{87}\lambda_0)=0$ and $\widetilde{\varphi}_4^{\f2}(\lambda_{15}^2\lambda_{39}\lambda_0)=0$, then
\[
\widetilde{\varphi}_5^{\f2}(\widetilde{U}_0)=Q^{191}Q^{39}Q^{15}Q^{15}Q^0.
\]
Applying the adem relation, we get $Q^{15}Q^0=0\in R_2$, it implies that $\widetilde{\varphi}_5^{\f2}(\widetilde{U}_0)=0$. Hence, ${\varphi}_5^{\f2}({U}_0)=0$ and then $\varphi_5^{\f2}(U_i)=\varphi_5^{\f2}((Sq^0)^i(U_0))=(Sq^0)^i(\varphi_5^{\f2}(U_0))=0$.
\end{proof}
\begin{proposition}[\cite{Hung-Tuan-preprint}]
\begin{enumerate}
\item The zero-th Lannes-Zarati homomorphism $\varphi_0^P$ is an isomorphism on $\Ext{A}{0}{P}{\f2}$.
\item The first Lannes-Zarati homomorphism $\varphi_1^P$ is a monomorphism on ${\rm Span}\{h_i\widehat{h}_j:i\geq j\}$ and vanishes on ${\rm Span}\{h_i\widehat{h}_j:i< j\}$.
\item The $s$-th Lannes-Zarati homomorphism $\varphi_s^P$ vanishes in all positive stems in $\Ext{A}{s}{P}{\f2}$ for $2\leq s\leq 4$.
\end{enumerate}
\end{proposition}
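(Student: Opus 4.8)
The plan is to reuse, essentially verbatim up to the change of Adem and excess conventions, the machinery already assembled for odd $p$. The three tools are: the mod~$2$ chain-level representation of $\varphi_s^M$ (Proposition~\ref{pro:representation of varphi in lambda p=2}), which sends a cycle $\lambda_I\otimes\ell\in\Lambda_s\otimes M^\#$ to $[Q^I\otimes\ell]$; the additive description of $(\mathscr{R}_sM)^\#$ (Proposition~\ref{pro:dual of Singer functor p=2}), together with the observation --- the $p=2$ form of the one following Proposition~\ref{pro:expan R_sM in Gamma M} --- that $Q^I\otimes\ell$ represents $0$ in $(\mathscr{R}_sM)^\#$ whenever $e(I)<|\ell|$; and the $Sq^0$-compatibility of Proposition~\ref{pro:squaring}. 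As input on the source side I will use the additive structure of $\Ext{A}{s}{P}{\f2}$ with explicit lambda-algebra representatives, as computed by Lin~\cite{Lin08} (for $s\le1$ this is the mod~$2$ analogue of Theorems~\ref{thm:Ext(P)^0} and~\ref{thm: Ext(P)^1}), together with its module structure over $Sq^0$.

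Parts~(1) and~(2) then come almost for free. For~(1), $\mathscr{R}_0P=P$, so both $\Ext{A}{0,t}{P}{\f2}$ and $(\f2\tsor{A}\mathscr{R}_0P)^\#_t$ carry the $\f2$-basis $\{\widehat h_i\}_{i\ge0}$ with $\widehat h_i=[a_{2^i-1}]$ (here $a_t\in P^\#$ is dual to $x^t\in P$), and Proposition~\ref{pro:representation of varphi in lambda p=2} identifies $\varphi_0^P(\widehat h_i)$ with $[a_{2^i-1}]$, so $\varphi_0^P$ is an isomorphism. For~(2), the class $h_i\widehat h_j$ is represented by $\lambda_{2^i-1}\otimes a_{2^j-1}$, so $\varphi_1^P(h_i\widehat h_j)=[Q^{2^i-1}a_{2^j-1}]$; since $e(Q^{2^i-1})=2^i-1$ while $|a_{2^j-1}|=2^j-1$, this image is $0$ exactly when $i<j$, which already gives the vanishing on $\mathrm{Span}\{h_i\widehat h_j:i<j\}$. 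When $i\ge j$ the monomial $Q^{2^i-1}\otimes a_{2^j-1}$ is a basis vector of $(\mathscr{R}_1P)^\#$ in internal degree $2^i+2^j-2$; these degrees are pairwise distinct as $(i,j)$ runs over $i\ge j\ge0$, by uniqueness of binary expansions, so injectivity on $\mathrm{Span}\{h_i\widehat h_j:i\ge j\}$ reduces to checking that each such class remains nonzero in $(\f2\tsor{A}\mathscr{R}_1P)^\#$, a short computation with the Nishida relations.

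Part~(3) is where the work lies. I will run through a complete additive basis of $\Ext{A}{s}{P}{\f2}$ in positive stems for $2\le s\le4$, reducing along three mechanisms. First, $Sq^0$-reduction: if a generator is $(Sq^0)^iz_0$ for a lower-degree generator $z_0$, then by Proposition~\ref{pro:squaring} it suffices to treat $z_0$; this collapses each infinite $Sq^0$-family to a single ``atom'', leaving only finitely many classes per $s$. For an atom $z$ with lambda-cycle representative $\widetilde z$, I will show $\widetilde\varphi_s^{\f2}(\widetilde z)=0$ in $(\mathscr{R}_sP)^\#$ for one of two reasons: (a) $\widetilde z$ is a sum of monomials of negative excess, whose images vanish by the observation above; or (b) after Adem straightening the Dyer--Lashof monomial $Q^I$ appearing in $\widetilde\varphi_s^{\f2}(\widetilde z)=[Q^Ia_t]$ already vanishes in $R_s$ (typically because it carries a trailing factor of the form $Q^{2^k-1}Q^0$, and the $p=2$ analogue of Lemma~\ref{lm:of theorem rank3} disposes of such splittings). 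This is exactly the pattern of the worked example: $\widetilde\varphi_5^{\f2}(\widetilde U_0)=Q^{191}Q^{39}Q^{15}Q^{15}Q^0=0$ since $Q^{15}Q^0=0$ in $R_2$, and $\widetilde\varphi_5^{\f2}(\widetilde U_i)=(Sq^0)^i\widetilde\varphi_5^{\f2}(\widetilde U_0)=0$. Carrying this out for every atom in positive stems with $s=2,3,4$ yields the claim.

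The main obstacle is purely bookkeeping: compiling, for $s=3$ and (especially) $s=4$, the full list of positive-stem generators of $\Ext{A}{s}{P}{\f2}$ with explicit lambda-cycle representatives and $Sq^0$-module structure, and then verifying that every atom falls under~(a) or~(b). No new idea is needed beyond those already used for $\varphi_3^{\fp}$ in Theorem~\ref{thm:rank3}. The one slightly delicate point is in~(2): one must be sure the right $A$-action on $R_1\otimes P^\#$ produces no relation among the classes $[Q^{2^i-1}a_{2^j-1}]$, $i\ge j$. As noted, these lie in pairwise distinct internal degrees, so it is enough to rule out that an individual such class is a boundary for the Nishida action, and this follows from the explicit additive description of $(\f2\tsor{A}\mathscr{R}_1P)^\#$.
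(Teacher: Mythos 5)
Your proposal is essentially the paper's proof: both use the chain-level representation $\lambda_I\otimes\ell\mapsto[Q^I\otimes\ell]$ of Proposition~\ref{pro:representation of varphi in lambda p=2}, the excess criterion from Proposition~\ref{pro:dual of Singer functor p=2}, the $Sq^0$-compatibility of Proposition~\ref{pro:squaring} to reduce each $Sq^0$-family to a single class, and then per-atom verification via negative excess or Adem-straightening in the Dyer--Lashof algebra (the paper only writes out a few worked examples for part~(3), e.g.\ $\widehat c_i$, $\alpha_{16}(i)$, $\gamma_{63}(i)$, asserting the rest ``follows the same argument'', which is exactly the bookkeeping you describe). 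For part~(2) your appeal to the explicit basis of $(\f2\tsor{A}\mathscr{R}_1P)^\#$ to see the images are nonzero matches Remark~\ref{rm:vaphi_1^P for p=2}.
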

\begin{proof}
The statements (1) and (2) are easily proved by using Proposition \ref{pro:representation of varphi in lambda p=2} and the representations of $\widehat{h}_i$ and $h_i\widehat{h}_j$ on $\Lambda\otimes P^\#$ (see Lin \cite{Lin08} for example).

Similar to above proposition, here we only give some illustrated examples, the detail proof of (3) is followed by the same argument.

First, we prove $\varphi_2^P(\widehat{c}_i)=0$ for $i\geq0$. From Lin \cite{Lin08}, $\widehat{c}_i\in\Ext{A}{2,2^{i+3}+2^{i+1}+2^i-1}{P}{\f2}$ is represented in $\Lambda\otimes P^\#$ by the cycle
\[
\bar{c}_i=(\widetilde{Sq}^0)^i(\lambda_3^2b^{[2]}), i\geq0.
\]
Since $e(\lambda_3^2)=0<2$, then, using Proposition \ref{pro:representation of varphi in lambda p=2} and Proposition \ref{pro:dual of Singer functor p=2}, it follows that $\widetilde{\varphi}_2^P(\bar{c}_0)=0$, hence, $\varphi_2^P(c_0)=0$. Therefore, $\varphi_2^P(c_i)=(Sq^0)^i(\varphi_2^P(c_0))=0$ for $i\geq0$.

Second, we will show that $\varphi_3^P(\alpha_{16}(i))=0$ for $i\geq0$. From Lin \cite{Lin08}, the element $\alpha_{16}(i)\in\Ext{A}{3,2^{i+4}+2^{i+2}-1}{P}{\fp}$ is represented in $\Lambda\otimes P^\#$ by the cycle
\[
\bar{\alpha}_{16}(i)=(\widetilde{Sq}^0)^i(\lambda_7^2\lambda_0b^{[2]}+(\lambda_3^2\lambda_9+\lambda_7\lambda_5\lambda_3)b^{[1]}),i\geq0.
\]
Using Proposition \ref{pro:representation of varphi in lambda p=2} and Proposition \ref{pro:dual of Singer functor p=2}, it is easy to verify that $\widetilde{\varphi}_3^P(\bar{\alpha}_{16}(0))=Q^7Q^7Q^0b^{[2]}$. 

Since $e(Q^7Q^7Q^0)=0<2$, it implies that $\widetilde{\varphi}_3^P(\bar{\alpha}_{16}(0))=0$. Hence, ${\varphi}_3^P({\alpha}_{16}(0))=0$ and then ${\varphi}_3^P({\alpha}_{16}(i))=0$ for $i\geq0$.

Finally, we will verify that $\varphi_4^P(\gamma_{63}(i))=0$ for $i\geq0$. From Lin \cite{Lin08}, the element $\gamma_{63}(i)\in \Ext{A}{4,2^{i+6}+2^{i+1}+2^i-1}{P}{\f2}$ is represented in $\Lambda\otimes P^\#$ by the cycle
\begin{multline*}
\bar{\gamma}_{63}(i)=(\widetilde{Sq}^0)^i(\lambda_{31}\lambda_7\lambda_{23}\lambda_0b^{[2]}+\lambda_{47}(\lambda_3^2\lambda_9+\lambda_9\lambda_3^2)b^{[1]}\\
+(\lambda_{15}^2\lambda_{11}\lambda_{21}+\lambda_{31}\lambda_7\lambda_{15}\lambda_9+\lambda_{15}\lambda_{47}\lambda_0^2)b^{[1]}),i\geq0.
\end{multline*}

By the same method, it is easy to verify that $\widetilde{\varphi}_4^P(\bar{\gamma}_{63}(0))=Q^{47}Q^9Q^3Q^3b^{[1]}$. 

Applying the adem relation, we get that $Q^9Q^3Q^{3}=0\in R_3$, then $\widetilde{\varphi}_4^P(\bar{\gamma}_{63}(0))=0$. It implies that ${\varphi}_4^P({\gamma}_{63}(0))$ and then ${\varphi}_4^P({\gamma}_{63}(i))=0$ for $i\geq0$.
\end{proof}
\begin{remark}\label{rm:vaphi_1^P for p=2}\rm
Basing on Proposition \ref{pro:dual of Singer functor p=2}, it is easy to verify that $(\f2\tsor{A}\mathscr{R}_1P)^\#$ is spanned by
\[
\left\{\left[Q^{2^i-1}b^{[2^j-1]}\right]: i\geq j\right\}\cup\left\{(Sq^0)^i\left(\left[Q^{2(2^{j}-1)}b^{[1]}\right]+\left[Q^{2^{j+1}-1}b^{[2]}\right]\right):i\geq0, j\geq1\right\}.
\]

Therefore, the first Lannes-Zarati homomorphism $\varphi_1^P$ is not an epimorphism.
\end{remark}
\providecommand{\bysame}{\leavevmode\hbox to3em{\hrulefill}\thinspace}
\providecommand{\MR}{\relax\ifhmode\unskip\space\fi MR }
\providecommand{\MRhref}[2]{%
  \href{http://www.ams.org/mathscinet-getitem?mr=#1}{#2}
}
\providecommand{\href}[2]{#2}

\end{document}